\setlist[itemize]{align=parleft,left=0pt..1.5em}
\DeclareMathAlphabet{\mathcalligra}{T1}{calligra}{m}{n}
\DeclareMathOperator{\Res}{Res}
\DeclareMathOperator{\Hom}{Hom}
\DeclareMathOperator{\Span}{Span}
\DeclareMathOperator{\End}{End}
\DeclareMathOperator{\obj}{obj}
\begin{document}

\newtheorem{thm}{Theorem}[section]
\newtheorem{prop}[thm]{Proposition}
\newtheorem{coro}[thm]{Corollary}
\newtheorem{conj}[thm]{Conjecture}
\newtheorem{example}[thm]{Example}
\newtheorem{lem}[thm]{Lemma}
\newtheorem{rem}[thm]{Remark}
\newtheorem{hy}[thm]{Hypothesis}
\newtheorem*{acks}{Acknowledgements}
\theoremstyle{definition}
\newtheorem{de}[thm]{Definition}
\newtheorem{ex}[thm]{Example}

\newtheorem{convention}[thm]{Convention}

\newtheorem{bfproof}[thm]{{\bf Proof}}

\newcommand{\C}{{\mathbb{C}}}
\newcommand{\Z}{{\mathbb{Z}}}
\newcommand{\N}{{\mathbb{N}}}
\newcommand{\te}[1]{\mbox{#1}}
\newcommand{\set}[2]{{
    \left.\left\{
        {#1}
    \,\right|\,
        {#2}
    \right\}
}}
\newcommand{\sett}[2]{{
    \left\{
        {#1}
    \,\left|\,
        {#2}
    \right\}\right.
}}

\newcommand{\choice}[2]{{
\left[
\begin{array}{c}
{#1}\\{#2}
\end{array}
\right]
}}
\def \<{{\langle}}
\def \>{{\rangle}}

\def\({\left(}

\def\){\right)}

\def \:{\mathopen{\overset{\circ}{
    \mathsmaller{\mathsmaller{\circ}}}
    }}
\def \;{\mathclose{\overset{\circ}{\mathsmaller{\mathsmaller{\circ}}}}}

\newcommand{\overit}[2]{{
    \mathop{{#1}}\limits^{{#2}}
}}
\newcommand{\belowit}[2]{{
    \mathop{{#1}}\limits_{{#2}}
}}

\newcommand{\wt}[1]{\widetilde{#1}}

\newcommand{\wh}[1]{\widehat{#1}}

\newcommand{\wck}[1]{\reallywidecheck{#1}}

\newlength{\dhatheight}
\newcommand{\dwidehat}[1]{%
    \settoheight{\dhatheight}{\ensuremath{\widehat{#1}}}%
    \addtolength{\dhatheight}{-0.45ex}%
    \widehat{\vphantom{\rule{1pt}{\dhatheight}}%
    \smash{\widehat{#1}}}}
\newcommand{\dhat}[1]{%
    \settoheight{\dhatheight}{\ensuremath{\hat{#1}}}%
    \addtolength{\dhatheight}{-0.35ex}%
    \hat{\vphantom{\rule{1pt}{\dhatheight}}%
    \smash{\hat{#1}}}}

\newcommand{\dwh}[1]{\dwidehat{#1}}

\newcommand{\dis}{\displaystyle}

\newcommand{\pd}[1]{\frac{\partial}{\partial {#1}}}

\newcommand{\pdiff}[2]{\frac{\partial^{#2}}{\partial #1^{#2}}}


\newcommand{\g}{{\mathfrak g}}
\newcommand{\ff}{{\mathfrak f}}
\newcommand{\f}{\ff}
\newcommand{\gc}{{\bar{\g'}}}
\newcommand{\h}{{\mathfrak h}}
\newcommand{\cent}{{\mathfrak c}}
\newcommand{\notc}{{\not c}}
\newcommand{\Loop}{{\mathcal L}}
\newcommand{\G}{{\mathcal G}}
\newcommand{\D}{\mathcal D}
\newcommand{\T}{\mathcal T}
\newcommand{\Free}{\mathcal F}
\newcommand{\Cfk}{\mathcal C}
\newcommand{\nil}{\mathfrak n}
\newcommand{\al}{\alpha}
\newcommand{\be}{\beta}
\newcommand{\beck}{\be^\vee}
\newcommand{\ssl}{{\mathfrak{sl}}}
\newcommand{\id}{\te{id}}
\newcommand{\rtu}{{\xi}}
\newcommand{\period}{{N}}
\newcommand{\half}{{\frac{1}{2}}}
\newcommand{\reciprocal}[1]{{\frac{1}{#1}}}
\newcommand{\inverse}{^{-1}}
\newcommand{\inv}{\inverse}
\newcommand{\SumInZm}[2]{\sum\limits_{{#1}\in\Z_{#2}}}
\newcommand{\uce}{{\mathfrak{uce}}}
\newcommand{\Rcat}{\mathcal R}
\newcommand{\cS}{{\mathcal{S}}}


\newcommand{\E}{{\mathcal{E}}}
\newcommand{\F}{{\mathcal{F}}}

\newcommand{\Etopo}{{\mathcal{E}_{\te{topo}}}}

\newcommand{\Ye}{{\mathcal{Y}_\E}}

\newcommand{\rh}{{{\bf h}}}
\newcommand{\rp}{{{\bf p}}}
\newcommand{\rrho}{{{\pmb \varrho}}}
\newcommand{\ral}{{{\pmb \al}}}

\newcommand{\comp}{{\mathfrak{comp}}}
\newcommand{\ctimes}{{\widehat{\boxtimes}}}
\newcommand{\ptimes}{{\widehat{\otimes}}}
\newcommand{\ptimeslt}{{
{}_{\te{t}}\ptimes
}}
\newcommand{\ptimesrt}{{\ot_{\te{t}} }}
\newcommand{\ttp}[1]{{
    {}_{{#1}}\ptimes
}}
\newcommand{\bigptimes}{{\widehat{\bigotimes}}}
\newcommand{\bigptimeslt}{{
{}_{\te{t}}\bigptimes
}}
\newcommand{\bigptimesrt}{{\bigptimes_{\te{t}} }}
\newcommand{\bigttp}[1]{{
    {}_{{#1}}\bigptimes
}}

\newcommand{\ot}{\otimes}
\newcommand{\Ot}{\bigotimes}
\newcommand{\bt}{\boxtimes}

\newcommand{\affva}[1]{V_{\wh\g}\(#1,0\)}
\newcommand{\saffva}[1]{L_{\wh\g}\(#1,0\)}
\newcommand{\saffmod}[1]{L_{\wh\g}\(#1\)}

\newcommand{\otcopies}[2]{\belowit{\underbrace{{#1}\ot \cdots \ot {#1}}}{{#2}\te{-times}}}

\newcommand{\wtotcopies}[3]{\belowit{\underbrace{{#1}\wh\ot_{#2} \cdots \wh\ot_{#2} {#1}}}{{#3}\te{-times}}}


\newcommand{\tar}{{\mathcal{DY}}_0\(\mathfrak{gl}_{\ell+1}\)}
\newcommand{\U}{{\mathcal{U}}}
\newcommand{\htar}{\mathcal{DY}_\hbar\(A\)}
\newcommand{\hhtar}{\widetilde{\mathcal{DY}}_\hbar\(A\)}
\newcommand{\htarz}{\mathcal{DY}_0\(\mathfrak{gl}_{\ell+1}\)}
\newcommand{\hhtarz}{\widetilde{\mathcal{DY}}_0\(A\)}
\newcommand{\qhei}{\U_\hbar\left(\hat{\h}\right)}
\newcommand{\n}{{\mathfrak{n}}}
\newcommand{\vac}{{{\mathbbm 1}}}
\newcommand{\vtar}{{{
    \mathcal{V}_{\hbar,\tau}\left(\ell,0\right)
}}}

\newcommand{\qtar}{
    \U_q\(\wh\g_\mu\)}
\newcommand{\rk}{{\bf k}}

\newcommand{\hctvs}[1]{Hausdorff complete linear topological vector space}
\newcommand{\hcta}[1]{Hausdorff complete linear topological algebra}
\newcommand{\ons}[1]{open neighborhood system}
\newcommand{\B}{\mathcal{B}}
\newcommand{\rx}{{\bf x}}
\newcommand{\re}{{\bf e}}
\newcommand{\rphi}{{\boldsymbol{ \phi}}}

\newcommand{\der}{\mathcal D}

\newcommand{\prodlim}{\mathop{\prod_{\longrightarrow}}\limits}

\newcommand{\lp}[1]{\mathcal L(f)}

\newcommand{\cha}{\check a}
\newcommand{\chh}{\check \h}


\makeatletter
\@addtoreset{equation}{section}
\def\theequation{\thesection.\arabic{equation}}
\makeatother \makeatletter

\title{Representations of quantum lattice vertex algebras}

\author{Fei Kong$^1$}
\email{kongmath@hunnu.edu.cn}
\address{Key Laboratory of Computing and Stochastic Mathematics (Ministry of Education), School of Mathematics and Statistics, Hunan Normal University, Changsha, China 410081}
\thanks{$^1$Partially supported by the NSF of China  (No.12371027).}


\begin{abstract}
Let $Q$ be a non-degenerate even lattice, let $V_Q$ be the lattice vertex algebra associated to $Q$, and let $V_Q^\eta$ be a quantum lattice vertex algebra (\cite{JKLT-Quantum-lattice-va}).
In this paper, we prove that every $V_Q^\eta$-module is completely reducible, and the set of simple $V_Q^\eta$-modules are in one-to-one correspondence with the set of cosets of $Q$ in its dual lattice.
\end{abstract}


\keywords{Quantum vertex algebras, lattice vertex algebras, quantum lattice vertex algebras}

\subjclass[2020]{17B69}
%
\maketitle


\section{Introduction}

Let $Q$ be a non-degenerate even lattice of finite rank, and let $V_Q$ be the corresponding lattice vertex algebra (see \cite{Bor, FLM2}).
It is well known that every $V_Q$-module is completely reducible and the set of simple $V_Q$-modules are in one-to-one correspondence with the set of cosets of $Q$ in its dual lattice $Q^0$ (see \cite{D1,DLM, LL}).

In the general field of vertex algebras, one conceptual problem is to develop suitable quantum vertex algebra theories and associate quantum vertex algebras to quantum affine algebras.
In \cite{EK-qva}, Etingof and Kazhdan developed a theory of quantum vertex operator algebras in the sense of formal deformations of vertex algebras. Partly motivated by their work, H. Li
conducted a series of studies.
While vertex algebras are analogues of commutative associative algebras, H. Li introduced the notion of
nonlocal vertex algebras \cite{Li-nonlocal}, which are analogues of noncommutative associative algebras.
A nonlocal vertex algebra satisfying the $S$-locality becomes a weak quantum vertex algebra \cite{Li-nonlocal}.
In addition, a quantum vertex algebra \cite{Li-nonlocal} is a weak quantum vertex algebra such that the $S$-locality is controlled by a rational quantum Yang-Baxter operator.
In order to deal with quantum affine algebras, H. Li developed the theory of $\phi$-coordinated modules in \cite{Li-phi-coor}.
The $\hbar$-adic counterparts of these notions were introduced in \cite{Li-h-adic}.
In this framework, a quantum vertex operator algebra in the sense of Etingof-Kazhdan is an $\hbar$-adic quantum vertex algebra whose classical limit is a vertex algebra.

In the very paper \cite{EK-qva}, Etingof and Kazhdan constructed quantum vertex operator algebras as formal deformations of universal affine vertex algebras of type $A$, by using the $R$-matrix type relations given in \cite{RS-RTT}.
Butorac, Jing and Ko\v{z}i\'{c} (\cite{BJK-qva-BCD}) extended Etingof-Kazhdan's construction
to type $B$, $C$ and $D$ rational $R$-matrices.
For trigonometric $R$-matrices of type $A$, $B$, $C$ and $D$, Ko\v{z}i\'{c} constructed the corresponding quantum vertex operator algebras in \cite{Kozic-qva-tri-A, K-qva-phi-mod-BCD},
and established a one-to-one correspondence between their $\phi$-coordinated modules and
restricted modules for quantum affine algebras of classical types.
In our joint work with N. Jing, H. Li, and S. Tan \cite{JKLT-Defom-va}, we introduced a method for constructing quantum vertex operator algebras by using vertex bialgebras.
Utilizing this approach, we constructed a family of quantum lattice vertex algebras, which are formal deformations of lattice vertex algebras.
Moreover, based on the Drinfeld's quantum affinization construction (\cite{Dr-new,J-KM,Naka-quiver}), we constructed the quantum affine vertex algebras for all symmetric Kac-Moody Lie algebras in \cite{K-Quantum-aff-va},
and prove a one-to-one correspondence between their $\phi$-coordinated modules and certain restricted modules for quantum affinizations of symmetrizable quantum Kac-Moody algebras.

The main purpose of this paper is to study the modules of quantum lattice vertex algebras.
We demonstrate that every $ V_Q^\eta $-module is completely reducible, and the set of simple $V_Q^\eta$-modules are in
one-to-one correspondence with $Q^0/Q$.

The paper is organized as follows.
Section \ref{sec:VAs} recalls the basics about lattice vertex algebras.
Section \ref{sec:qvas} introduces the notion $\hbar$-adic quantum vertex algebras and present some basic results.
Section \ref{sec:qlattice} recalls the construction of quantum lattice vertex algebras based on the work presented in \cite{JKLT-Defom-va}, and presents the main result of this paper.

Throughout this paper, we denote by $\Z_+$ and $\N$ the set of positive and nonnegative integers, respectively.
For a vector space $W$ and $g(z)\in W[[z,z\inv]]$, we denote by $g(z)^+$ (resp. $g(z)^-$) the regular (singular) part of $g(z)$.

\section{Lattice vertex algebras}\label{sec:VAs}

A \emph{vertex algebra} (VA) is a vector space $V$ together with a \emph{vacuum vector} $\vac\in V$ and a vertex operator map
\begin{align}
    Y(\cdot,z):&V\longrightarrow \E(V):=\Hom(V,V((z)));\quad
    v\mapsto Y(v,z)=\sum_{n\in\Z}v_nz^{-n-1},
\end{align}
such that
\begin{align}\label{eq:vacuum-property}
    Y(\vac,z)v=v,\quad Y(v,z)\vac\in V[[z]],\quad \lim_{z\to 0}Y(v,z)\vac=v,
    \quad\te{for }v\in V,
\end{align}
and that
\begin{align}\label{eq:Jacobi}
    &z_0\inv\delta\(\frac{z_1-z_2}{z_0}\)Y(u,z_1)Y(v,z_2)
    -z_0\inv\delta\(\frac{z_2-z_1}{-z_0}\)Y(v,z_2)Y(u,z_1)\\
    &\quad=z_1\inv\delta\(\frac{z_2+z_0}{z_1}\)Y(Y(u,z_0)v,z_2)
    \quad\quad\te{for }u,v\in V.\nonumber
\end{align}

A \emph{module} of a VA $V$ is a vector space $W$ together with a vertex operator map
\begin{align}
    Y_W(\cdot,z):&V\longrightarrow \E(W);\quad
    v\mapsto Y_W(v,z)=\sum_{n\in\Z}v_nz^{-n-1},
\end{align}
such that $Y_W(\vac,z)=1_W$ and
\begin{align*}
    &z_0\inv\delta\(\frac{z_1-z_2}{z_0}\)Y_W(u,z_1)Y_W(v,z_2)
    -z_0\inv\delta\(\frac{z_2-z_1}{-z_0}\)Y_W(v,z_2)Y_W(u,z_1)\\
    &\quad=z_1\inv\delta\(\frac{z_2+z_0}{z_1}\)Y_W(Y(u,z_0)v,z_2)
    \quad\quad\te{for }u,v\in V.\nonumber
\end{align*}

In the rest of this section, we recall the construction and some results of lattice VAs (see \cite[Section 6.4-6.5]{LL}).
Let $Q$ be a non-degenerate even lattice of finite rank, i.e., a free abelian group of finite rank equipped with a non-degenerate symmetric $\Z$-valued bilinear form $\<\cdot,\cdot\>$ such that $\<\al,\al\>\in 2\Z$ for any $\al\in Q$.
Set
\begin{align*}
    \h=\C\ot_\Z Q
\end{align*}
and extend $\<\cdot,\cdot\>$ to a symmetric $\C$-valued bilinear form on $\h$.
We form an affine Lie algebra $\hat\h$, where
\begin{align*}
  \hat\h=\h\ot \C[t,t\inv]\oplus\C c
\end{align*}
as a vector space, and where $c$ is central and
\begin{align*}
  [\al(m),\beta(n)]=m\delta_{m+n,0}\<\al,\beta\>c
\end{align*}
for $\al,\beta\in\h,\,m,n\in\Z$ with $\al(m)$ denoting $\al\ot t^m$.
Define the following two abelian Lie subalgebras
\begin{align*}
  \hat\h^\pm=\h\ot t^{\pm 1}\C[t^{\pm 1}],
\end{align*}
and identify $\h$ with $\h\ot t^0$. Set
\begin{align*}
  \hat\h'=\hat\h^+\oplus\hat\h^-\oplus\C c
\end{align*}
which is a Heisenberg algebra. Then $\hat\h=\hat\h'\oplus \h$, a direct sum of Lie algebras.

Let
\begin{align*}
    Q^0=\set{\beta\in\h}{\<\beta,Q\>\in\Z}
\end{align*}
be the dual lattice of $Q$.
Choose a minimal $d\in\Z_+$, such that
\begin{align*}
    Q^0\subset (1/d)Q.
\end{align*}
Assume $Q=\oplus_{i\in I}\Z\al_i$.
We fix a total order $<$ on the index set $I$. Let $\epsilon:(1/d)Q\times (1/d)Q\to \C^\times$
be the bimultiplicative map defined by
\begin{align}
    \epsilon(1/d\al_i,1/d\al_j)=\begin{cases}
        1,&\mbox{if }i\le j,\\
        \exp(\pi\<\al_i,\al_j\>\sqrt{-1}/d^2),&\mbox{if }i>j.
    \end{cases}
\end{align}
Then $\epsilon$ is a $2$-cocycle of $(1/d)Q$ satisfying the following relations:
\begin{align*}
    \epsilon(\al,\beta)\epsilon(\beta,\al)\inv=(-1)^{\<\al,\beta\>},\quad \epsilon(\beta,0)=1=\epsilon(0,\beta)\quad \te{for }\al,\beta\in Q.
\end{align*}
Denote by $\C_\epsilon[(1/d)Q]$ the $\epsilon$-twisted group algebra of $(1/d)Q$, which by definition has a designated basis $\set{e_\al}{\al\in (1/d)Q}$ with relations
\begin{align}
    e_\al\cdot e_\beta=\epsilon(\al,\beta)e_{\al+\beta}\quad\te{for }\al,\beta\in (1/d)Q.
\end{align}
For any subset $S\subset (1/d)Q$, we denote
\begin{align*}
    \C_\epsilon[S]=\Span_\C\set{e_\al}{\al\in S}.
\end{align*}
Then $\C_\epsilon[Q]$ is a subalgebra of $\C_\epsilon[(1/d)Q]$, and $\C_\epsilon[Q^0]$ becomes a $\C_\epsilon[Q]$-module under the left multiplication action.
Moreover, for each coset $Q+\gamma\in Q^0/Q$, $\C_\epsilon[Q+\gamma]$ is a simple $\C_\epsilon[Q]$-submodule of $\C_\epsilon[Q^0]$,
and
\begin{align}
    \C_\epsilon[Q^0]=\bigoplus_{Q+\gamma\in Q^0/Q}\C_\epsilon[Q+\gamma]
\end{align}
as $\C_\epsilon[Q]$-modules.

Make $\C_{\epsilon}[Q^0]$ an $\hat \h$-module by letting $\hat\h'$
act trivially and letting $\h$ act by
\begin{eqnarray}
  h e_\be=\<h,\be\>e_\be \   \  \mbox{ for }h\in \h,\  \be\in Q^0.
\end{eqnarray}
Note that $S(\hat\h^-)$ is naturally an $\hat\h$-module of level $1$.
For each coset $Q+\gamma\in Q^0/Q$, we set
\begin{eqnarray}
V_{Q+\gamma}=S(\hat\h^-)\otimes \C_{\epsilon}[Q+\gamma],
\end{eqnarray}
the tensor product of $\hat \h$-modules, which is an $\hat \h$-module of level $1$.
Set
$$\vac=1\ot e_0\in  V_Q.$$
Identify $\h$ and $\C_{\epsilon}[Q+\gamma]$ as subspaces of $V_{Q+\gamma}$ via the correspondence
$$h\mapsto h(-1)\otimes 1\   (h\in\h) \quad\te{and}\quad e_\al\mapsto 1\otimes e_\al\   (\al\in Q+\gamma).$$
For $h\in \h$, set
\begin{align}
h(z)=\sum_{n\in\Z}h(n)z^{-n-1}.
\end{align}
On the other hand, for $\al\in Q$ set
\begin{align}\label{eq:def-E}
  E^\pm(\al,z)=\exp\(\sum_{n\in\Z_+} \frac{\al(\pm n)}{\pm n}z^{\mp n} \)
\end{align}
on $V_{Q+\gamma}$.
For $\al\in Q$,  define
$z^{\al}:\   \C_{\epsilon}[Q+\gamma]\rightarrow \C_{\epsilon}[Q+\gamma][z,z^{-1}]$ by
\begin{eqnarray}
z^\al\cdot e_\be=z^{\<\al,\be\>}e_\be\   \   \   \mbox{ for }\be\in Q+\gamma.
\end{eqnarray}

\begin{thm}\label{thm:lattice-VA}
There exists a VA structure on $V_Q$ (see \cite{Bor,FLM2}), which is uniquely determined by
the conditions that $\vac$ is the vacuum vector and that
\begin{align*}
&Y_Q(h,z)=h(z),\quad
Y_Q(e_\al,z)=E^-(-\al,z)E^+(-\al,z)e_\al z^\al\quad\te{for } h\in\h,\,\,\al\in Q.
\end{align*}
Moreover, every $V_Q$-module is completely reducible, and for each simple $V_Q$-module $W$,
there exists a coset $Q+\gamma\in Q^0/Q$, such that $W\cong V_{Q+\gamma}$ (see \cite{D1}, \cite[Theorem 3.16]{DLM}).
\end{thm}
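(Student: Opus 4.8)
The statement splits into the construction of the vertex algebra $V_Q$ and the classification of its modules, and for each part the plan is to reconstruct the classical argument. For the vertex algebra structure I would follow \cite{Bor,FLM2} (see also \cite[Section 6.4-6.5]{LL}): the operators $h(z)$ ($h\in\h$) and $E^-(-\al,z)E^+(-\al,z)e_\al z^\al$ ($\al\in Q$) are fields on $V_Q$, and the one substantial check is that they form a mutually local set. The commutators of the exponential factors are controlled by $[\al(m),\be(n)]=m\delta_{m+n,0}\<\al,\be\>c$, and the scalar picked up when $e_\al z^\al$ is moved past $e_\be z^\be$ is compensated exactly by $\epsilon(\al,\be)\epsilon(\be,\al)\inv=(-1)^{\<\al,\be\>}$, so that a suitable power of $(z_1-z_2)$ removes the singularities. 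Since $V_Q$ is generated from $\vac$ by $\h$ and the $e_\al$, the existence theorem for vertex algebras attached to a local set of fields then yields a unique vertex algebra structure with the prescribed generating vertex operators and with the vacuum and Jacobi axioms holding automatically; the same construction exhibits each $V_{Q+\gamma}$ as a $V_Q$-module.

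For the classification, let $(W,Y_W)$ be an arbitrary $V_Q$-module; I would recover from it a twisted group algebra module, following \cite{D1} and \cite[Theorem 3.16]{DLM}. First, the modes of $Y_W(h,z)$ ($h\in\h$) together with $Y_W(\vac,z)=1_W$ make $W$ a restricted $\hat\h$-module of level $1$: restrictedness is the condition $Y_W(h,z)w\in W((z))$, and the bracket relations and central charge are read off by specializing the Jacobi identity to $u,v\in\h\subset V_Q$. By the structure theory of modules over the Heisenberg algebra $\hat\h'$ at nonzero level, the natural map $S(\hat\h^-)\ot\Omega_W\to W$ is an isomorphism of $\hat\h$-modules, where $\Omega_W=\set{w\in W}{\hat\h^+w=0}$ is the vacuum space; moreover $\Omega_W$ is stable under the mutually commuting operators $h(0)$ ($h\in\h$). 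Then I would bring in the $e_\al$: the commutator formula gives $[h(m),Y_W(e_\al,z)]=\<h,\al\>z^mY_W(e_\al,z)$, which lets one conjugate $Y_W(e_\al,z)$ by the exponential factors to obtain $Y_W(e_\al,z)=E^-(-\al,z)E^+(-\al,z)\mathcal X_\al(z)$ with $\mathcal X_\al(z)$ commuting with $\hat\h'$ (hence preserving $\Omega_W$) and satisfying $[h(0),\mathcal X_\al(z)]=\<h,\al\>\mathcal X_\al(z)$.

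The technical core, carried out in \cite{D1,DLM}, is the analysis of the operators $\mathcal X_\al(z)$. Since $Y_W(e_\al,z)$ involves only integral powers of $z$ and no $\log z$, and $E^-(-\al,z)E^+(-\al,z)$ contributes only integral powers, the $z$-dependence of $\mathcal X_\al(z)$ must be by integral powers; together with $[h(0),\mathcal X_\al(z)]=\<h,\al\>\mathcal X_\al(z)$ and the locality among the $Y_W(e_\al,z)$, this forces $h(0)$ to act semisimply on $\Omega_W$ with every eigenvalue $\lambda$ satisfying $\<\al,\lambda\>\in\Z$ for all $\al\in Q$, i.e. $\lambda\in Q^0$, and it forces $\mathcal X_\al(z)$ to act on the $\lambda$-weight space as $z^{\<\al,\lambda\>}Y_\al$ for a single weight-$0$ operator $Y_\al$ on $\Omega_W$. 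Extracting the leading term of the locality relation between $Y_W(e_\al,z_1)$ and $Y_W(e_\be,z_2)$ yields $Y_\al Y_\be=\epsilon(\al,\be)Y_{\al+\be}$, so $\{h(0):h\in\h\}$ and $\{Y_\al:\al\in Q\}$ make $\Omega_W$ into a $\C_\epsilon[Q]$-module whose $h(0)$-weights lie in $Q^0$; conversely, from any such $\Omega$ one rebuilds a $V_Q$-module structure on $S(\hat\h^-)\ot\Omega$ by the formulas defining the action on the $V_{Q+\gamma}$. Thus $W\mapsto\Omega_W$ is an equivalence from the category of $V_Q$-modules to that of $Q^0$-weighted $\C_\epsilon[Q]$-modules.

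To finish: on the latter side each $Y_\al$ is invertible ($Y_\al Y_{-\al}=\epsilon(\al,-\al)\id$), so $\Omega_W$ decomposes along the cosets $Q+\gamma\in Q^0/Q$, and on each such piece all weight spaces are identified by the $Y_\al$; hence $\Omega_W\cong\bigoplus_{Q+\gamma\in Q^0/Q}\C_\epsilon[Q+\gamma]\ot U_{Q+\gamma}$ for plain vector spaces $U_{Q+\gamma}$, each $\C_\epsilon[Q+\gamma]$ being a simple $\C_\epsilon[Q]$-module as recalled before the theorem. Transporting this back through the equivalence gives $W\cong\bigoplus_{Q+\gamma\in Q^0/Q}V_{Q+\gamma}\ot U_{Q+\gamma}$ as $V_Q$-modules, which is the asserted complete reducibility; $V_{Q+\gamma}$ is simple because a nonzero submodule has a nonzero vacuum space, necessarily all of the simple $\C_\epsilon[Q+\gamma]$ and hence generating all of $V_{Q+\gamma}$, and $V_{Q+\gamma}\cong V_{Q+\gamma'}$ forces $\gamma-\gamma'\in Q$ by comparing $h(0)$-spectra. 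I expect the main obstacle to be precisely the ``technical core'' step: isolating the zero modes $Y_\al$ from the fields $Y_W(e_\al,z)$, establishing the integrality $\lambda\in Q^0$ of the $h(0)$-weights, and verifying the relations $Y_\al Y_\be=\epsilon(\al,\be)Y_{\al+\be}$ — this is the substance of Dong's rationality theorem for lattice vertex algebras, and the efficient route is to invoke \cite{D1,DLM} for the attendant $\delta$-function computations rather than to redo them here.
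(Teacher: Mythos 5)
The paper offers no proof of this theorem: it is quoted as known background, with the vertex algebra structure attributed to Borcherds and Frenkel--Lepowsky--Meurman and the module classification to Dong and Dong--Li--Mason (the paper later also points to the Lepowsky--Li $A(Q)$ reformulation in \cite[Section 6.5]{LL}, which is the same circle of ideas). Your outline is a correct reconstruction of that standard argument — locality of the generating fields via the cocycle identity, the Heisenberg decomposition $W\cong S(\hat\h^-)\ot\Omega_W$, extraction of the zero-mode operators $Y_\al$ with $Y_\al Y_\be=\epsilon(\al,\be)Y_{\al+\be}$, and the coset decomposition — with the genuinely technical steps correctly identified and, as in the paper itself, deferred to \cite{D1,DLM}.
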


\section{Quantum vertex algebras}\label{sec:qvas}

In this paper, we let $\hbar$ be a formal variable, and let $\C[[\hbar]]$ be the ring of formal power series in $\hbar$.
A $\C[[\hbar]]$-module $V$ is \emph{topologically free} if $V=V_0[[\hbar]]$
for some vector space $V_0$ over $\C$.
It is known that a $\C[[\hbar]]$-module $W$ is topologically free if and only if it is Hausdorff complete under the $\hbar$-adic topology and
\begin{align}\label{eq:torsion-free}
  \hbar w=0\quad\te{implies}\quad w=0\quad\te{for any }w\in W
\end{align}
(see \cite{Kassel-topologically-free}).
For another topologically free $\C[[\hbar]]$-module $U=U_0[[\hbar]]$, we recall the complete tensor product
\begin{align*}
    U\wh\ot V=(U_0\ot V_0)[[\hbar]].
\end{align*}

We view a vector space as a $\C[[\hbar]]$-module by letting $\hbar=0$.
Fix a $\C[[\hbar]]$-module $W$. For $k\in\Z_+$, and some formal variables $z_1,\dots,z_k$, we define
\begin{align}
    \E^{(k)}(W;z_1,\dots,z_k)=\Hom_{\C[[\hbar]]}\(W,W((z_1,\dots,z_k))\)
\end{align}
We will denote $\E^{(k)}(W;z_1,\dots,z_k)$ by $\E^{(k)}(W)$ if there is no ambiguity,
and will denote $\E^{(1)}(W)$ by $\E(W)$.
An ordered sequence $(a_1(z),\dots,a_k(z))$ in $\E(W)$ is said to be \emph{compatible} (\cite{Li-nonlocal})
if there exists an $m\in\Z_+$, such that
\begin{align*}
    \(\prod_{1\le i<j\le k}(z_i-z_j)^m\)a_1(z_1)\cdots a_k(z_k)\in\E^{(k)}(W).
\end{align*}

Now, we assume that $W=W_0[[\hbar]]$ is topologically free.
Define
\begin{align}
    \E_\hbar^{(k)}(W;z_1,\dots,z_k)=\Hom_{\C[[\hbar]]}\(W,W_0((z_1,\dots,z_k))[[\hbar]]\).
\end{align}
Similarly, we denote $\E_\hbar^{(k)}(W;z_1,\dots,z_k)$ by $\E_\hbar^{(k)}(W)$ if there is no ambiguity,
and denote $\E_\hbar^{(1)}(W)$ by $\E_\hbar(W)$ for short.
We note that $\E_\hbar^{(k)}(W)=\E^{(k)}(W_0)[[\hbar]]$ is topologically free.
For $n,k\in\Z_+$, the quotient map from $W$ to $W/\hbar^nW$ induces the following $\C[[\hbar]]$-module map
\begin{align*}
    \wt\pi_n^{(k)}:\End_{\C[[\hbar]]}(W)[[z_1^{\pm 1},\dots,z_k^{\pm 1}]]
    \to \End_{\C[[\hbar]]}(W/\hbar^nW)[[z_1^{\pm 1},\dots,z_k^{\pm 1}]].
\end{align*}
For $A(z_1,z_2),B(z_1,z_2)\in\Hom_{\C[[\hbar]]}(W,W_0((z_1))((z_2))[[\hbar]])$, we write $A(z_1,z_2)\sim B(z_2,z_1)$
if for each $n\in\Z_+$ there exists $k\in\N$, such that
\begin{align*}
    (z_1-z_2)^k\wt\pi_n^{(2)}(A(z_1,z_2))=(z_1-z_2)^k\wt\pi_n^{(2)}(B(z_2,z_1)).
\end{align*}
Let $Z(z_1,z_2):\E_\hbar(W)\wh\ot \E_\hbar(W)\wh\ot\C((z))[[\hbar]]\to \End_{\C[[\hbar]]}(W)[[z_1^{\pm 1},z_2^{\pm 1}]]$
be defined by
\begin{align*}
    Z(z_1,z_2)(a(z)\ot b(z)\ot f(z))=\iota_{z_1,z_2}f(z_1-z_2)a(z_1)b(z_2).
\end{align*}

For each $k\in\Z_+$, the inverse system
\begin{align*}
    \xymatrix{
    0&W/\hbar W\ar[l]&W/\hbar^2W\ar[l]&W/\hbar^3W\ar[l]&\cdots\ar[l]
    }
\end{align*}
induces the following inverse system
\begin{align}\label{eq:E-h-inv-sys}
    \xymatrix{
    0&\E^{(k)}(W/\hbar W)\ar[l]&\E^{(k)}(W/\hbar^2W)\ar[l]&\cdots\ar[l]
    }
\end{align}
Then $\E_\hbar^{(k)}(W)$ is isomorphic to the inverse limit of \eqref{eq:E-h-inv-sys}.
The map $\wt \pi_n^{(k)}$ induces a $\C[[\hbar]]$-module $\pi_n^{(k)}:\E_\hbar^{(k)}(W)\to \E^{(k)}(W/\hbar^nW)$.
It is easy to verify that $\ker \pi_n^{(k)}=\hbar^n\E_\hbar^{(k)}(W)$.
We will denote $\pi_n^{(1)}$ by $\pi_n$ for short.
An ordered sequence $(a_1(z),\dots,a_r(z))$ in $\E_\hbar(W)$ is called \emph{$\hbar$-adically compatible} if for every $n\in\Z_+$,
the sequence $$(\pi_n(a_1(z)),\dots,\pi_n(a_r(z)))$$ in $\E(W/\hbar^nW)$ is compatible.
A subset $U$ of $\E_\hbar(W)$ is called \emph{$\hbar$-adically compatible} if every finite sequence in $U$ is $\hbar$-adically compatible.
Let $(a(z),b(z))$ in $\E_\hbar(W)$ be $\hbar$-adically compatible. That is, for any $n\in\Z_+$, we have
\begin{align*}
    (z_1-z_2)^{k_n}\pi_n(a(z_1))\pi_n(b(z_2))\in\E^{(2)}(W/\hbar^nW)\quad\te{for some }k_n\in\N.
\end{align*}
We recall the following vertex operator map (\cite{Li-h-adic}):
\begin{align*}
    &Y_\E(a(z),z_0)b(z)=\sum_{n\in\Z}a(z)_nb(z)z_0^{-n-1}\\
    =&\varinjlim_{n>0}z_0^{-k_n}\left.\((z_1-z)^{k_n}\pi_n(a(z_1))\pi_n(b(z))\)\right|_{z_1=z+z_0}.
\end{align*}

An \emph{$\hbar$-adic nonlocal vertex algebra} (\cite{Li-h-adic}) is a topologically free $\C[[\hbar]]$-module $V$ equipped with a vacuum vector $\vac$ such that the vacuum property \eqref{eq:vacuum-property} holds, and a vertex operator map $Y(\cdot,z):V\to \E_\hbar(V)$
such that
\begin{align*}
    \set{Y(u,z)}{u\in V}\subset\E_\hbar(V)\quad\te{is $\hbar$-adically compatible},
\end{align*}
and that
\begin{align*}
    Y_\E(Y(u,z),z_0)Y(v,z)=Y(Y(u,z_0)v,z)\quad\te{for }u,v\in V.
\end{align*}
We denote by $\partial$ the canonical derivation of $V$:
\begin{align}
    u\to\partial u=\lim_{z\to 0}\frac{d}{dz}Y(u,z)\vac.
\end{align}
Moreover, a \emph{$V$-module} is a topologically free $\C[[\hbar]]$-module $W$ equipped with a vertex operator map $Y_W(\cdot,z):V\to \E_\hbar(W)$, such that $Y_W(\vac,z)=1_W$,
\begin{align*}
    \set{Y_W(u,z)}{u\in V}\subset\E_\hbar(W)\quad\te{is $\hbar$-adically compatible},
\end{align*}
and that
\begin{align*}
    Y_\E(Y_W(u,z),z_0)Y_W(v,z)=Y_W(Y(u,z_0)v,z)\quad\te{for }u,v\in V.
\end{align*}

An \emph{$\hbar$-adic quantum VA} is an $\hbar$-adic nonlocal VA $V$ equipped with a $\C[[\hbar]]$-module map (called a \emph{quantum Yang-Baxter operator})
\begin{align}
  S(z):V\wh\ot V\to V\wh\ot V\wh\ot \C((z))[[\hbar]],
\end{align}
which satisfies the \emph{shift condition}:
\begin{align}\label{eq:qyb-shift}
  [\partial\ot 1,S(z)]=-\frac{d}{dz}S(z),
\end{align}
the \emph{quantum Yang-Baxter equation}:
\begin{align}\label{eq:qyb}
  S^{12}(z_1)S^{13}(z_1+z_2)S^{23}(z_2)=S^{23}(z_2)S^{13}(z_1+z_2)S^{12}(z_1),
\end{align}
and the \emph{unitarity condition}:
\begin{align}\label{eq:qyb-unitary}
  S^{21}(z)S(-z)=1,
\end{align}
satisfying the following conditions:

  (1) The \emph{vacuum property}:
  \begin{align}\label{eq:qyb-hex1}
    S(z)(\vac\ot v)=\vac\ot v,\quad \te{for }v\in V.
  \end{align}

 (2) The \emph{$S$-locality}:
  For any $u,v\in V$, one has
  \begin{align}\label{eq:qyb-locality}
  Y(u,z_1)Y(v,z_2)\sim Y(z_2)(1\ot Y(z_1))S(z_2-z_1)(v\ot u).
  \end{align}

  (3) The \emph{hexagon identity}:
  \begin{align}\label{eq:qyb-hex-id}
    S(z_1)Y^{12}(z_2)=Y^{12}(z_2)S^{23}(z_1)S^{13}(z_1+z_2).
  \end{align}

\begin{rem}{\em
Let $(V,S(z))$ be a quantum VA.
Then
\begin{align}
  &S(z)(v\ot\vac)=v\ot\vac\quad \te{for }v\in V,\label{eq:qyb-vac-id-alt}\\
  &[1\ot\partial,S(z)]=\frac{d}{dz}S(z),\label{eq:qyb-shift-alt}\\
  &S(z_1)(1\ot Y(z_2))=(1\ot Y(z_2))S^{12}(z_1-z_2)S^{13}(z_1)\label{eq:qyb-hex2},\\
  &S(z)f(\partial\ot 1)=f\(\partial\ot 1+\pd{z}\)S(z)\quad \te{for }f(z)\in\C[z][[\hbar]],\label{eq:qyb-shift-total1}\\
  &S(z)f(1\ot \partial)=f\(1\ot \partial-\pd{z}\)S(z)\quad \te{for }f(z)\in\C[z][[\hbar]].\label{eq:qyb-shift-total2}
\end{align}
}
\end{rem}

\begin{rem}\label{rem:Jacobi-S}
\emph{
Let $(W,Y_W)$ be a $V$-module.
Then for $u,v\in V$, we have that
\begin{align*}
    &Y_W(u,z_1)Y_W(v,z_2)-Y_W(z_2)(1\ot Y_W(z_1))S(z_2-z_1)(v\ot u)\\
    &\quad=Y_W(Y(u,z_1-z_2)^-v-Y(u,-z_2+z_1)^-v,z_2).
\end{align*}
}
\end{rem}

For a topologically free $\C[[\hbar]]$-module $V$ and a submodule $U\subset V$, we denote by $\bar U$ the closure of $U$ and set
\begin{align}
  [U]=\set{u\in V}{\hbar^n u\in U\,\,\te{for some }n\in\Z_+}.
\end{align}
Then $\overline{[U]}$ is the minimal closed submodule that is invariant under the operation $[\cdot]$.
Suppose further that $V$ is an $\hbar$-adic nonlocal VA, and $\vac\in U$.
We set
\begin{align}
  U'=\Span_{\C[[\hbar]]}\set{u_mv}{u,v\in U,\,m\in\Z},\quad\te{and}\quad \wh U=\overline{[U']}.
\end{align}
For a subset $S\subset V$, we define
\begin{align*}
  S^{(1)}=\overline{\left[\Span_{\C[[\hbar]]}S\cup\{\vac\}\right]},\quad\te{and}\quad
  S^{(n+1)}=\wh{S^{(n)}}\quad\te{for }n\ge 1.
\end{align*}
Then we have
\begin{align*}
  \vac\in S^{(1)}\subset S^{(2)}\subset\cdots.
\end{align*}
Set
\begin{align*}
  \<S\>=\overline{\left[\cup_{n\ge 1}S^{(n)}\right]}.
\end{align*}
Then $\<S\>$ is the unique minimal closed $\hbar$-adic nonlocal subVA of $V$, such that $[\<S\>]=\<S\>$.
In addition, we say that $V$ is generated by $S$ if $\<S\>=V$.

\begin{rem}\label{rem:top-eq}\emph{
Let $V$ be a topologically free $\C[[\hbar]]$-module, and let $U$ be a submodule of $V$.
There are two topologies on $U$: the $\hbar$-adic topology of $U$ itself and the induced topology.
If $[U]=U$, then such two topologies are equivalent (\cite[Lemma 3.5]{Li-h-adic}).
More precisely, for each $n\in\N$, one has that $U\cap \hbar^n V=\hbar^n U$.
}
\end{rem}

\begin{convention}
Let $V$ be an $\hbar$-adic nonlocal VA, and let $W$ be a $V$-module.
In this paper, the submodule $W_1$ of $W$ that we consider is required to meet the conditions that $[W_1]=W_1$ and $\overline{W_1}=W_1$.
\end{convention}

\begin{lem}\label{lem:simple}
Let $V_0$ be a nonlocal VA, and let $W_0$ be a simple $V_0$-module.
View $V=V_0[[\hbar]]$ as an $\hbar$-adic nonlocal VA, and $W=W_0[[\hbar]]$ as a $V$-module.
Then $W$ is simple.
\end{lem}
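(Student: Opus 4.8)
The plan is a topological Nakayama-type argument. Let $W_1\subseteq W$ be a nonzero submodule in the sense of the Convention preceding the statement, so that $[W_1]=W_1$ and $\overline{W_1}=W_1$; the goal is to show $W_1=W$.

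First I would reduce modulo $\hbar$. By construction the $V$-module structure on $W=W_0[[\hbar]]$ is the $\C[[\hbar]]$-bilinear extension of the given $V_0$-action on $W_0$, so $\hbar V$ annihilates $W/\hbar W$ and reduction mod $\hbar$ identifies $W/\hbar W$ with $W_0$ as a module over $V/\hbar V=V_0$; hence the image $U_0:=(W_1+\hbar W)/\hbar W$ of $W_1$ in $W_0$ is a $V_0$-submodule of $W_0$. It is nonzero: writing a nonzero $w\in W_1$ as $w=\hbar^k w'$ with $w'\notin\hbar W$ (possible since $W$ is $\hbar$-adically separated), we have $\hbar^k w'\in W_1$, so $w'\in[W_1]=W_1$, and $w'$ has nonzero image in $W_0$. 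Since $W_0$ is simple, $U_0=W_0$, i.e.\ $W_1+\hbar W=W$. (If one prefers, one can also note that Remark~\ref{rem:top-eq} applied to $[W_1]=W_1$ gives $W_1\cap\hbar W=\hbar W_1$, so that $W_1/\hbar W_1\to W_0$ is injective, but this is not needed.)

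Then I would bootstrap and close up. Multiplying $W=W_1+\hbar W$ by $\hbar$ and using $\hbar W_1\subseteq W_1$ gives $W=W_1+\hbar^2W$, and inductively $W=W_1+\hbar^nW$ for every $n\ge 1$. Thus for each $w\in W$ and each $n$ there is $w^{(n)}\in W_1$ with $w-w^{(n)}\in\hbar^nW$, so the sequence $(w^{(n)})_n$ converges $\hbar$-adically to $w$; since $W_1$ is closed, $w\in W_1$. Therefore $W_1=W$, and $W$ is simple.

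The argument carries essentially no deep content --- it is a completeness/Nakayama-style argument --- and the only step needing a little care is the first one: one uses $[W_1]=W_1$ to relate $W_1$ to its reduction mod $\hbar$, and one must observe that this reduction carries the $V$-action on $W$ exactly to the original $V_0$-action on $W_0$, so that simplicity of $W_0$ can be brought to bear. Once $W_1+\hbar W=W$ is in hand, closedness of $W_1$ finishes the proof immediately.
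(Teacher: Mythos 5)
Your proof is correct and is essentially the paper's own argument: both reduce modulo $\hbar$, use $[W_1]=W_1$ to see that the image of $W_1$ in $W/\hbar W\cong W_0$ is a nonzero $V_0$-submodule, invoke simplicity of $W_0$ to get $W=W_1+\hbar W$, and then conclude $W_1=W$ by an $\hbar$-adic approximation using completeness/closedness of $W_1$. The only cosmetic difference is how the nonvanishing of the reduction is checked (the paper via $W_1/\hbar W_1\neq 0$ for the topologically free module $W_1$, you by stripping powers of $\hbar$ from a nonzero element), which amounts to the same use of $[W_1]=W_1$.
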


\begin{proof}
Let $W'$ be a nonzero submodule of $W$.
Since $W'$ is closed and $W$ is topologically free, we have that $W'$ is also topologically free.
As $[W']=W'$, we get that
\begin{align*}
  &(W'+\hbar W)/\hbar W\cong W'/(W'\cap \hbar W)=W'/\hbar W'\ne 0.
\end{align*}
So $(W'+\hbar W)/\hbar W\subset W/\hbar W$ is a nonzero $V_0$-submodule.
Then $(W'+\hbar W)/\hbar W=W/\hbar W$, since $W/\hbar W\cong W_0$ is simple.
It implies that $W=W'+\hbar W$.
For each $w_0\in W$, we have that
\begin{align*}
  w_0=& w_0'+\hbar w_1\quad\te{for some }w_0'\in W',\,\,w_1\in W\\
  =&w_0'+\hbar (w_1'+\hbar w_2) \quad\te{for some }w_1'\in W',\,\,w_2\in W\\
  =&\cdots=\sum_{n=0}^\infty w_n'\hbar^n\in \overline{W'}=W'.
\end{align*}
It shows that $W=W'$. Therefore, $W$ is simple.
\end{proof}

\begin{lem}\label{lem:submodule}
Let $V_0$ be a nonlocal VA, let $W_0$ be a $V_0$-module and let $W_0'$ be a submodule of $W_0$.
View $V=V_0[[\hbar]]$ as an $\hbar$-adic nonlocal VA.
Then $W_0'[[\hbar]]$ is a $V$-submodule of $W_0[[\hbar]]$.
\end{lem}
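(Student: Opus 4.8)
The plan is to set $W_1:=W_0'[[\hbar]]$ and verify directly that, equipped with the vertex operator map obtained by restriction, $W_1$ is a $V$-module of the kind singled out in the Convention, i.e.\ one with $[W_1]=W_1$ and $\overline{W_1}=W_1$. First I would record the purely $\C[[\hbar]]$-module facts: $W_1$ is a closed $\C[[\hbar]]$-submodule of $W:=W_0[[\hbar]]$, it is topologically free with underlying vector space $W_0'$, and $[W_1]=W_1$ (if $\hbar^nw\in W_1$ then every coefficient of $w$ already lies in $W_0'$, so $w\in W_1$). Thus $W_1$ meets the closure and $[\cdot]$-stability requirements.

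Next I would pin down the vertex operator map. Writing $u=\sum_{i\ge 0}u_i\hbar^i\in V$ with $u_i\in V_0$, the $V$-module structure on $W=W_0[[\hbar]]$ induced from the $V_0$-module $W_0$ is $Y_W(u,z)=\sum_{i\ge 0}Y_{W_0}(u_i,z)\hbar^i$. Since $W_0'$ is a $V_0$-submodule of $W_0$, each $Y_{W_0}(u_i,z)$ maps $W_0'$ into $W_0'((z))$; collecting coefficients of powers of $\hbar$ then shows that $Y_W(u,z)$ maps $W_1$ into $W_0'((z))[[\hbar]]$, so $Y_{W_1}(u,z):=Y_W(u,z)|_{W_1}$ lies in $\E_\hbar(W_1)$. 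The vacuum vector $\vac$ lies in $W_1$, the identity $Y_{W_1}(\vac,z)=1_{W_1}$ holds by restriction, and the vacuum property for $W_1$ is inherited from that for $W$.

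The two remaining axioms — $\hbar$-adic compatibility of $\set{Y_{W_1}(u,z)}{u\in V}$ and the iterate identity $Y_\E(Y_{W_1}(u,z),z_0)Y_{W_1}(v,z)=Y_{W_1}(Y(u,z_0)v,z)$ — I would obtain by restricting the corresponding data for $W$, and this is where the only genuine bookkeeping lies. By Remark \ref{rem:top-eq} (applicable since $[W_1]=W_1$) one has $W_1\cap\hbar^nW=\hbar^nW_1$, so $W_1/\hbar^nW_1$ embeds naturally into $W/\hbar^nW$, and $\pi_n(Y_{W_1}(u,z))$ is exactly the restriction of $\pi_n(Y_W(u,z))$, an operator preserving $W_1/\hbar^nW_1$. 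Given $u_1,\dots,u_r\in V$, an integer $m$ witnessing $\hbar$-adic compatibility of $(Y_W(u_1,z),\dots,Y_W(u_r,z))$ also witnesses it for the restrictions: after multiplying by $\prod_{i<j}(z_i-z_j)^m$ and applying to an element of $W_1/\hbar^nW_1$, the result lies in $(W/\hbar^nW)((z_1,\dots,z_r))$ and has all coefficients in $W_1/\hbar^nW_1$, hence lies in $\E^{(r)}(W_1/\hbar^nW_1)$. The same observation shows that the direct limit defining $Y_\E(Y_{W_1}(u,z),z_0)Y_{W_1}(v,z)$ is the restriction to $W_1$ of the one defining $Y_\E(Y_W(u,z),z_0)Y_W(v,z)$; since the latter equals $Y_W(Y(u,z_0)v,z)$, restricting to $W_1$ gives $Y_{W_1}(Y(u,z_0)v,z)$. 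This verifies all the module axioms, so $W_1=W_0'[[\hbar]]$ is a $V$-submodule of $W_0[[\hbar]]$. I expect the descent of $\hbar$-adic compatibility through the inclusion $W_1\hookrightarrow W$ (and, with it, the descent of the $Y_\E$-identity) to be the only point requiring care; everything else follows immediately from base change to $\C[[\hbar]]$.
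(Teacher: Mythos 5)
Your proposal is correct and follows essentially the same route as the paper: the paper reduces the whole lemma to checking $\bigl[W_0'[[\hbar]]\bigr]=W_0'[[\hbar]]$, which it proves by the same coefficient-comparison argument you give in your first paragraph. The remaining verifications you carry out (closedness, restriction of the vertex operator map, descent of $\hbar$-adic compatibility and of the $Y_\E$-identity) are exactly the routine checks the paper declares sufficient and omits, so your write-up is simply a more detailed version of the same proof.
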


\begin{proof}
We denote $W_0'[[\hbar]]$ by $W'$.
It suffices to demonstrate that $\big[W_0'[[\hbar]]\big]=W_0'[[\hbar]]$.
Consider $w\in W_0[[\hbar]]$ with $\hbar w\in W_0'[[\hbar]]$.
Thus, $\hbar w$ can be expressed as:
\begin{align*}
  \hbar w=\sum_{n=0}^\infty w_n\hbar^n
\end{align*}
for some $w_n\in W_0'$ ($n\in \N$).
This implies that
\begin{align*}
  &w_0=\hbar w-\sum_{n=1}^\infty w_n\hbar^n\in W_0'\cap \hbar W_0[[\hbar]]=0.
\end{align*}
Consequently, $w=\sum_{n=1}^\infty w_n\hbar^{n-1}\in W_0'[[\hbar]]$, as required.
\end{proof}

\begin{lem}\label{lem:simple2}
Let $V_0$ be a nonlocal VA, such that every $V_0$-module is completely reducible.
View $V=V_0[[\hbar]]$ as an $\hbar$-adic nonlocal VA.
Then for each simple $V$-module $W$,
there is a simple $V_0$-module $W_0$ such that $W\cong W_0[[\hbar]]$.
\end{lem}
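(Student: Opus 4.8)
The plan is to produce the candidate $W_0$ as the classical limit $W/\hbar W$ and then show $W\cong W_0[[\hbar]]$ as $V$-modules. First I would note that since $W$ is a simple $V$-module, by the Convention it is topologically free, say $W=\wt W_0[[\hbar]]$ for some vector space $\wt W_0$, and $W/\hbar W\cong \wt W_0$ carries a natural $V_0=V/\hbar V$-module structure (the operators $Y_W(u,z)$ reduce mod $\hbar$). Since every $V_0$-module is completely reducible, $W/\hbar W$ decomposes as a direct sum of simple $V_0$-modules; I would first show this direct sum has exactly one summand, i.e.\ $W/\hbar W$ is itself a simple $V_0$-module. Indeed, if $W/\hbar W = M_1\oplus M_2$ with $M_1$ a proper nonzero $V_0$-submodule, let $\pi\colon W\to W/\hbar W$ be the quotient map and set $W' = \overline{[\pi^{-1}(M_1)]}$. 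This is a closed submodule with $[W']=W'$, it is proper (since $\pi(W')\subset M_1\subsetneq W/\hbar W$, using that $\hbar W\subset \pi^{-1}(M_1)$ so the closure/saturation does not escape $M_1$ after reducing mod $\hbar$), and it is nonzero; this contradicts simplicity of $W$. So $W_0:=W/\hbar W$ is a simple $V_0$-module.

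Next I would build an isomorphism $W\cong W_0[[\hbar]]$. By Lemma \ref{lem:simple}, $W_0[[\hbar]]$ is a simple $V$-module, so it suffices to construct a nonzero $V$-module homomorphism $W\to W_0[[\hbar]]$ (or the reverse) and invoke simplicity on both sides. The natural map is the $\hbar$-adic completion of the quotient maps $W\to W/\hbar^n W$; since $V_0$-modules are completely reducible, for each $n$ one can split $W/\hbar^n W$ compatibly over the tower, and the point is to lift a fixed splitting $W/\hbar W\hookrightarrow W$ of $\C$-vector spaces to a $V$-module map. Concretely, pick a $\C$-linear section $s\colon W_0\to W$ of $\pi$; extend it $\C[[\hbar]]$-linearly to $\wt s\colon W_0[[\hbar]]\to W$. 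This $\wt s$ is a $\C[[\hbar]]$-module isomorphism (it is an iso mod $\hbar$, hence an iso by topological freeness and completeness), but it need not be a $V$-module map. To fix this I would instead argue as in Lemma \ref{lem:simple}: transport the $V$-module structure of $W$ along $\wt s$ to get a $V$-module structure $Y'$ on $W_0[[\hbar]]$ whose reduction mod $\hbar$ is the original $V_0$-action on $W_0$. Then it remains to show any such $\hbar$-adic deformation $Y'$ of a given simple $V_0$-module action is isomorphic to the trivial (constant) one $Y'$ with coefficients in $\C$ — i.e.\ the $V_0$-module $W_0$ is ``rigid'' as an $\hbar$-adic deformation.

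The main obstacle is precisely this last rigidity step: showing the deformed module is trivial up to isomorphism. I would attack it by the standard obstruction-theoretic induction on $n$: assuming an isomorphism $\phi_n\colon (W_0[[\hbar]]/\hbar^n, Y') \to (W_0[[\hbar]]/\hbar^n, Y'_{\mathrm{const}})$ has been constructed, the obstruction to extending it to level $n+1$ lives in a suitable $\Hom$-space measuring the difference of the two actions mod $\hbar^{n+1}$; complete reducibility of $V_0$-modules (applied to $W_0\otimes$ (something), or more precisely the fact that $\Hom_{V_0}(W_0,W_0)=\C$ for $W_0$ simple together with vanishing of the relevant "$\mathrm{Ext}^1$"-type group that complete reducibility forces) kills this obstruction and lets the induction proceed, and $\varprojlim_n \phi_n$ gives the desired $V$-module isomorphism $W\cong W_0[[\hbar]]$. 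An alternative, cleaner route avoiding explicit obstruction theory: directly produce a nonzero homomorphism $W\to W_0[[\hbar]]$ of $V$-modules by a Zorn/completion argument (lifting $\pi$ through the tower $W/\hbar^n W$ using that each $W/\hbar^n W$ splits as $V_0$-modules), then conclude by applying Lemma \ref{lem:simple} and simplicity of $W$. I expect the write-up to follow this second route, as it most closely mirrors the proof of Lemma \ref{lem:simple} already in the paper.
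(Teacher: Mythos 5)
There is a genuine gap, and it comes from missing the one idea the paper's proof turns on. The paper does \emph{not} take $W_0=W/\hbar W$ and then trivialize a deformation; it applies complete reducibility of $V_0$-modules \emph{once, directly to $W$ itself} viewed as a $V_0$-module: since $\hbar W$ is a $V_0$-submodule of $W$, there is a $V_0$-submodule $W_0\subset W$ with $W=W_0\oplus\hbar W$. Because $W_0$ is then an honest $V_0$-stable subspace of $W$ and $W$ is topologically free, $W=W_0[[\hbar]]$ and the $V$-action on $W$ is forced by $\C[[\hbar]]$-linearity and continuity to be the canonical extension of the $V_0$-action on $W_0$ --- so the entire ``rigidity''/obstruction-theory step that you identify as the main obstacle simply never arises. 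Your difficulty is an artifact of choosing a merely $\C$-linear section $s:W/\hbar W\to W$; the cure is to choose a $V_0$-equivariant one, which is exactly what complete reducibility hands you. (Simplicity of $W_0$ is then deduced afterwards, from Lemma \ref{lem:submodule} and simplicity of $W$, rather than proved up front for $W/\hbar W$.)

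Moreover, the one step you do carry out in detail is incorrect. You set $W'=\overline{[\pi^{-1}(M_1)]}$ and claim it is proper. But $\pi^{-1}(M_1)\supseteq\ker\pi=\hbar W$, so for \emph{every} $w\in W$ one has $\hbar w\in\pi^{-1}(M_1)$, whence $[\pi^{-1}(M_1)]=W$ and $W'=W$; the saturation $[\,\cdot\,]$ always ``escapes'' any submodule containing $\hbar W$. So this argument does not show $W/\hbar W$ is simple (indeed, without invoking complete reducibility I see no direct way to pass from a proper $V_0$-submodule of $W/\hbar W$ to a proper $V$-submodule of $W$ satisfying the Convention $[W_1]=W_1$). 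The sketched obstruction-theoretic induction could in principle be pushed through --- splitting each extension $0\to\hbar^nW/\hbar^{n+1}W\to W/\hbar^{n+1}W\to W/\hbar^nW\to 0$ compatibly --- but as written it is only a plan, and the clean one-step splitting of $\hbar W$ inside $W$ makes it unnecessary.
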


\begin{proof}
Note that $W$ is completely reducible as a $V_0$-module and $\hbar W$ is a $V_0$-submodule of $W$.
There is a $V_0$-submodule $W_0$ of $W$ such that $W=W_0\oplus \hbar W$.
Since $W$ is topologically free and $W/\hbar W\cong W_0$, we get that $W=W_0[[\hbar]]$.
To complete the proof, we only need to show that $W_0$ is a simple $V_0$-module.
Let $W_0'$ be a nonzero $V_0$-submodule of $W_0$.
By Lemma \ref{lem:submodule}, $W_0'[[\hbar]]$ is a nonzero $V$-submodule of $W$.
Since $W$ is simple, it follows that $W=W_0'[[\hbar]]$.
This implies that $W_0=W_0'$, thereby demonstrating that $W_0$ is simple.
\end{proof}

\begin{lem}\label{lem:completely-reducible}
Let $V_0$ be a nonlocal VA, such that every $V_0$-module is completely reducible.
View $V=V_0[[\hbar]]$ as an $\hbar$-adic nonlocal VA.
Then every $V$-module is completely reducible.
\end{lem}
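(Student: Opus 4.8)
The plan is to prove that every submodule of a $V$-module admits a complementary submodule, by reducing modulo $\hbar$ to the complete reducibility of $V_0$-modules and then lifting the resulting splitting $\hbar$-adically. So let $W$ be a $V$-module and let $W_1\subset W$ be a submodule, which by our standing convention is closed and satisfies $[W_1]=W_1$. Then $W_1$ is topologically free, being a closed $\C[[\hbar]]$-submodule of a topologically free module; and since $[W_1]=W_1$, the quotient $W/W_1$ is $\hbar$-torsion-free and Hausdorff complete, hence topologically free, and it inherits a $V$-module structure from $W$. Writing $\overline X=X/\hbar X$ for a topologically free module $X$ and using $W_1\cap\hbar W=\hbar W_1$ (Remark \ref{rem:top-eq}), the exact sequence $0\to W_1\to W\xrightarrow{q}W/W_1\to 0$ reduces modulo $\hbar$ to an exact sequence $0\to\overline{W_1}\to\overline W\to\overline{W/W_1}\to 0$ of $V_0$-modules, with $\overline{W_1}$ a $V_0$-submodule of $\overline W$ and $\overline{W/W_1}\cong\overline W/\overline{W_1}$.

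Since $\overline W$ is a completely reducible $V_0$-module, this reduced sequence splits $V_0$-linearly; fix a $V_0$-module section $\sigma_0\colon\overline{W/W_1}\to\overline W$ of $\overline q$. The heart of the proof is to lift $\sigma_0$ to a $V$-module homomorphism $\sigma\colon W/W_1\to W$ with $q\sigma=\mathrm{id}$, by successive approximation. Start with any $\C[[\hbar]]$-linear lift $\sigma^{(0)}$ of $\sigma_0$ satisfying $q\sigma^{(0)}=\mathrm{id}$. Given a $\C[[\hbar]]$-linear $\sigma^{(n)}$ with $q\sigma^{(n)}=\mathrm{id}$ which intertwines the vertex operators modulo $\hbar^{n+1}$, consider the defect
\begin{align*}
D^{(n)}(u,z)=Y_W(u,z)\,\sigma^{(n)}-\sigma^{(n)}\,Y_{W/W_1}(u,z)\ \in\ \hbar^{n+1}\Hom_{\C[[\hbar]]}\!\bigl(W/W_1,\,W((z))[[\hbar]]\bigr).
\end{align*}
Because $q\sigma^{(n)}=\mathrm{id}$ and $q$ is a $V$-module map, $\hbar^{-(n+1)}D^{(n)}(u,z)$ is in fact $W_1((z))[[\hbar]]$-valued, so its reduction modulo $\hbar$ is $\overline{W_1}$-valued and depends on $u\in V_0$ like a $1$-cocycle; concretely it is the datum of an extension of the $V_0$-module $\overline{W/W_1}$ by $\overline{W_1}$. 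Complete reducibility of $V_0$-modules makes this extension split, i.e.\ the cocycle is a coboundary, and correcting $\sigma^{(n)}$ by an $\hbar^{n+1}$-multiple of a $\C[[\hbar]]$-linear lift of the associated $V_0$-module homomorphism $\overline{W/W_1}\to\overline{W_1}$ yields $\sigma^{(n+1)}$ with $q\sigma^{(n+1)}=\mathrm{id}$ that intertwines vertex operators modulo $\hbar^{n+2}$. The sequence $(\sigma^{(n)})$ is $\hbar$-adically Cauchy and converges to the desired $\sigma$.

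With $\sigma$ constructed, set $W_2=\sigma(W/W_1)$. From $q\sigma=\mathrm{id}$ we get $W=W_1\oplus W_2$ as $\C[[\hbar]]$-modules; $W_2$ is a $V$-submodule because $\sigma$ is a $V$-module homomorphism; and $W_2$, being a direct summand isomorphic to the topologically free module $W/W_1$, is closed in $W$ and satisfies $[W_2]=W_2$. Hence $W_1$ has a complementary submodule of the kind allowed by the standing convention, and every $V$-module is completely reducible. For the formulation as a complete direct sum of simple modules, one additionally writes $\overline W=\bigoplus_{i}U_i$ with each $U_i$ a simple $V_0$-module and uses Lemmas \ref{lem:simple} and \ref{lem:submodule}, together with the splitting just established applied repeatedly, to realize $W$ as the complete direct sum of the simple $V$-submodules $\cong U_i[[\hbar]]$.

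I expect the main obstacle to be the lifting step in the second paragraph: showing that the order-$n$ defect is a genuine $1$-cocycle representing an extension class of $V_0$-modules, so that complete reducibility of $V_0$-modules annihilates it. This rests on the module identity $Y_\E(Y_W(u,z),z_0)Y_W(v,z)=Y_W(Y(u,z_0)v,z)$ and on the $\hbar$-adic compatibility of the vertex operators, and is the single point where a nontrivial computation is needed; the convergence of $(\sigma^{(n)})$, the fact that the limit $\sigma$ is a $V$-module homomorphism, and the check that $W_2$ obeys the standing convention are then routine.
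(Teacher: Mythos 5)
Your argument is correct, but it takes a genuinely different route from the paper's. The paper does not lift a splitting order by order: it observes that $W$ and the submodule $W_1$ \emph{themselves} (not just their reductions modulo $\hbar$) are completely reducible as $V_0$-modules, chooses a $V_0$-complement $W_0'$ of $\hbar W_1$ in $W_1$ and a $V_0$-complement $W_0''$ of $W_0'+\hbar W$ in $W$, checks via $W_1\cap\hbar W=\hbar W_1$ (Remark \ref{rem:top-eq}) that $W=W_0'\oplus W_0''\oplus\hbar W$, and then concludes from topological freeness that $W=W_1\oplus W_0''[[\hbar]]$, with $W_0''[[\hbar]]$ a $V$-submodule by Lemma \ref{lem:submodule} --- a one-step argument with no induction and no obstruction theory. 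Your proof instead invokes complete reducibility only for the honest $V_0$-modules $\overline W$, $\overline{W_1}$, $\overline{W/W_1}$ and the extensions built from the order-$n$ defect; the price is the successive-approximation scheme, whose key step --- that the reduced defect $c_n=\hbar^{-(n+1)}D^{(n)}\bmod\hbar$ genuinely defines a $V_0$-module extension of $\overline{W/W_1}$ by $\overline{W_1}$ (one must check $c_n(\vac,z)=0$, that $c_n(u,z)$ is $\overline{W_1}((z))$-valued, compatibility, and the weak-associativity identity for the twisted action, all of which do follow from the module identity for $Y_W$ and $Y_{W/W_1}$ modulo $\hbar^{n+2}$) --- you flag but do not carry out. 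That verification is standard and goes through, so I regard the proposal as sound. What each buys: the paper's proof is far shorter; yours needs only the weaker input that the classical limits are completely reducible, and avoids treating the full $\hbar$-adic module $W$ as a $V_0$-module. One caution: your closing remark about realizing $W$ as a complete direct sum of the $U_i[[\hbar]]$ needs more than a citation of Lemma \ref{lem:submodule}, since the $V$-action on $W$ is not given a priori as the $\hbar$-linear extension of a $V_0$-action on a subspace containing the $U_i$; but neither the statement nor the paper's proof requires that refinement.
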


\begin{proof}
Let $W$ be a $V$-module and let $W'$ be a submodule of $W$.
Note that both $W'$ and $W$ are completely reducible as $V_0$-modules.
We choose a $V_0$-submodule $W_0'$ of $W'$ such that $W'=W_0'\oplus \hbar W'$,
and another $V_0$-submodule $W_0''$ of $W$ such that $W=W_0''\oplus (W_0'+\hbar W)$.
As a $\C[[\hbar]]$-submodule of $W$, $W'$ is Hausdorff under the $\hbar$-adic topology and satisfies condition \eqref{eq:torsion-free}. In addition, $W'$ is
complete by assumption.
Then $W'$ is topologically free and $W'=W_0'[[\hbar]]$.
Noticing that $W'\cap \hbar W=\hbar W'$ by Remark \ref{rem:top-eq}, we have that $W_0'\cap \hbar W=W_0'\cap W'\cap \hbar W=W_0'\cap \hbar W'=0$.
Then $W=W_0'\oplus W_0''\oplus \hbar W$.
Since $W$ is topologically free, we have that $W=(W_0'\oplus W_0'')[[\hbar]]$
and $W=W'\oplus W_0''[[\hbar]]$.
By utilizing Lemma \ref{lem:submodule}, we have that $W_0''[[\hbar]]$ is the required $V$-submodule of $W$.
\end{proof}

We list some technical results that will be used later on.
\begin{prop}\label{prop:vacuum-like}
Let $(V,Y,\vac)$ be an $\hbar$-adic quantum VA with quantum Yang-Baxter operator $S(z)$,
and let $(W,Y_W)$ be a $V$-module.
Assume that $U$ is a subset of $V$, such that
\begin{align}\label{eq:s-closed}
  S(z)(U \wh\ot U)\subset U\wh\ot U\wh\ot \C((z))[[\hbar]]
\end{align}
and $V$ is generated by $U$.
Suppose further that there exists $w^+\in W$, such that
\begin{align}\label{eq:vacuum-like}
  Y_W(u,z)w^+\in W[[z]]\quad\te{for }u\in U.
\end{align}
Then the relation \eqref{eq:vacuum-like} holds for all $u\in V$.
\end{prop}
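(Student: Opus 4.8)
The plan is to show that the set
\[
  T=\set{v\in V}{Y_W(v,z)w^+\in W[[z]]}
\]
is a closed $\C[[\hbar]]$-submodule of $V$ satisfying $[T]=T$, contains $U\cup\{\vac\}$, and is closed under the operations $v\mapsto u_mv$; then since $V=\<U\>$ we must have $T=V$. The vacuum $\vac$ lies in $T$ because $Y_W(\vac,z)=1_W$, and $U\subset T$ by hypothesis \eqref{eq:vacuum-like}. That $T$ is a $\C[[\hbar]]$-submodule is clear; that it is closed and satisfies $[T]=T$ follows from the fact that $W[[z]]$ is closed in $W((z))$ and that $W$ is torsion-free, so $\hbar^n Y_W(v,z)w^+\in W[[z]]$ forces $Y_W(v,z)w^+\in W[[z]]$.

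The heart of the matter is the closure under $n$-th products: given $u\in U$ and $v\in T$, one must show $Y_W(u_mv,z)w^+\in W[[z]]$ for every $m\in\Z$. Here I would use Remark \ref{rem:Jacobi-S}, the $S$-locality relation on the module $W$: for $u\in U$, $v\in V$,
\[
  Y_W(u,z_1)Y_W(v,z_2)-Y_W(z_2)(1\ot Y_W(z_1))S(z_2-z_1)(v\ot u)
  =Y_W\big(Y(u,z_1-z_2)^-v-Y(u,-z_2+z_1)^-v,z_2\big).
\]
Applying both sides to $w^+$: in the left-hand side, $Y_W(v,z_2)w^+\in W[[z_2]]$ since $v\in T$; and because $u\in U$ and, by \eqref{eq:s-closed}, $S(z_2-z_1)(v\ot u)\in U\wh\ot U\wh\ot\C((z))[[\hbar]]$ lies in $T\wh\ot T\wh\ot\C((z))[[\hbar]]$, the term $Y_W(z_2)(1\ot Y_W(z_1))S(z_2-z_1)(v\ot u)\,w^+$ — being of the form $Y_W(a,z_2)Y_W(b,z_1)w^+$ summed with coefficients regular in $z_2-z_1$, with $a,b\in T$ — has its singular-in-$z_1$ part controlled. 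The upshot is that the coefficients of negative powers of $z_1$ in $Y_W(u,z_1)Y_W(v,z_2)w^+$, which encode $Y_W(u_mv,z_2)w^+$ for $m\ge 0$ together with the usual iterate/residue manipulations, are forced to lie in $W[[z_2]]$; for $m<0$ one uses that $Y(u,z_0)v$ already has only finitely many negative powers of $z_0$ and the weak associativity $Y_\E(Y_W(u,z),z_0)Y_W(v,z)=Y_W(Y(u,z_0)v,z)$ to reduce to the nonnegative case plus derivatives, noting $\partial T\subset T$. Thus $u_mv\in T$ for all $m$.

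The main obstacle I anticipate is the bookkeeping in extracting $Y_W(u_mv,z_2)w^+$ cleanly from the $S$-locality identity: one must pass from the weak equality $\sim$ (equality after multiplying by $(z_1-z_2)^k$ and truncating mod $\hbar^n$) to an honest statement about individual Fourier coefficients, and then argue that the ``regular part'' contributions from the $S$-twisted side genuinely contribute nothing singular in $z_1$ after the substitution $z_1=z_2+z_0$. This is the standard argument that in a nonlocal/quantum VA the $n$-th products with $n\ge 0$ are expressible via $Y_\E$ and hence preserve any ``truncation'' property of the action on a fixed vector; the $S$-closedness hypothesis \eqref{eq:s-closed} is exactly what is needed so that the twist does not move us outside $U$. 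Once closure under products and under $\partial$ is established, the generation hypothesis $\<U\>=V$ together with the closedness and $[\cdot]$-stability of $T$ finishes the proof, since $\<U\>$ is by definition the smallest such subspace.
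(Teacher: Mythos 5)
There is a genuine gap, and it sits exactly at the step you yourself identify as the heart of the matter. You define $T=\set{v\in V}{Y_W(v,z)w^+\in W[[z]]}$ and try to show it is closed under products, but to run the $S$-locality argument for $u_mv$ with $u\in U$ and $v\in T$ you invoke \eqref{eq:s-closed} to claim $S(z_2-z_1)(v\ot u)\in U\wh\ot U\wh\ot\C((z))[[\hbar]]$. That hypothesis only applies when \emph{both} tensor factors lie in $U$; for $v\in T\setminus U$ you know nothing about where $S(z)$ sends $v\ot u$, so you cannot conclude that the twisted side $Y_W(z_2)(1\ot Y_W(z_1))S(z_2-z_1)(v\ot u)w^+$ is regular. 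Worse, to deduce $T\supseteq\<U\>$ you actually need closure of $T$ under products of pairs from each $U^{(n)}$ (not just $u\in U$), so the problem compounds at every stage of the generation process. The ``maximal regular set'' $T$ is simply not known to be $S$-stable, and there is no way to bootstrap that from \eqref{eq:s-closed} alone without an additional idea.

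The missing idea, which is how the paper proceeds, is to run a simultaneous induction on the filtration $U^{(1)}\subset U^{(2)}\subset\cdots$ whose union generates $V$, carrying \emph{both} properties along: at each stage one proves that $U^{(n+1)}$ again satisfies \eqref{eq:s-closed} \emph{and} \eqref{eq:vacuum-like}. The propagation of the $S$-stability \eqref{eq:s-closed} from $U^{(n)}$ to $U^{(n+1)}$ is where the vacuum property \eqref{eq:qyb-hex1} and the hexagon-type identity \eqref{eq:qyb-hex2} enter --- they show that $S(z)$ applied to products of elements of $U^{(n)}$ stays inside (the appropriate completion of) $U^{(n)}\wh\ot U^{(n)}\wh\ot\C((z))[[\hbar]]$. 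With that in hand, your remaining steps (apply the $S$-locality mod $\hbar^n$, multiply by $(z_1-z_2)^k$, substitute $z_1=z_2+z_0$ via weak associativity to get $Y_W(Y(u,z_0)v,z_2)w^+\in W[[z_2]]((z_0))$ up to $\hbar^n$, then let $n\to\infty$) do match the paper's argument, and there is no need to treat $m<0$ separately: weak associativity delivers regularity in $z_2$ of all coefficients of $Y(u,z_0)v$ at once. Your observations that the regularity property passes to spans, to $[\cdot]$, and to closures are correct and are also used in the paper; what must be added is the inductive maintenance of the $S$-stability.
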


\begin{proof}
We note that for any subset $S$ of $V$ such that the two relations \eqref{eq:s-closed} and \eqref{eq:vacuum-like} hold for $S$, these two relations hold for
$S\cup \{\vac\}$, $\Span_{\C[[\hbar]]}S$, $[S]$, and $\bar S$.
It is easy to see that \eqref{eq:s-closed} and \eqref{eq:vacuum-like} hold for $U^{(1)}$.
Suppose these two relations hold for $U^{(n)}$.
It is immediate from \eqref{eq:qyb-hex1} and \eqref{eq:qyb-hex2} that the relation \eqref{eq:s-closed} holds for $U^{(n+1)}$.
Next we show that \eqref{eq:vacuum-like} also holds for all $u\in U^{(n+1)}$.
Let $u,v\in U^{(n)}$. From the $S$-locality, we get that for each $n\ge 0$, there exists $k\ge 0$, such that
\begin{align*}
  &(z_1-z_2)^kY_W(u,z_1)Y_W(v,z_2)w^+
  \equiv (z_1-z_2)^kY_W^{12}(z_2)Y_W^{23}(z_1)S^{12}(z_2-z_1)(v\ot u\ot w^+)
\end{align*}
modulo $\hbar^n W$.
From the relation \eqref{eq:vacuum-like}, we have that
\begin{align*}
  (z_1-z_2)^kY_W(u,z_1)Y_W(v,z_2)w^+\in W[[z]]+\hbar^n W[[z,z\inv]].
\end{align*}
Then from the weak associativity, we get that
\begin{align*}
  Y_W(Y(u,z_0)v,z_2)w^+\equiv z_0^{-k}\((z_1-z_2)^kY_W(u,z_1)Y_W(v,z_2)w^+\)|_{z_1=z_2+z_0}
\end{align*}
lies in $W[[z_2]]((z_0))+\hbar^n W[[z_0^{\pm 1},z_2^{\pm 1}]]$.
Since $n$ is arbitrary, we have that
\begin{align*}
  Y_W(Y(u,z_0)v,z_2)w^+\in W[[z_2,z_0,z_0\inv]].
\end{align*}
Therefore, the relation \eqref{eq:vacuum-like} holds for $u\in \(U^{(n)}\)'$,
and hence holds for $u\in \wh{U^{(n)}}=U^{(n+1)}$.
As
\begin{align*}
  U^{(n)}\subset U^{(n+1)}\quad\te{for }n\ge 1,\quad\te{and}\quad
  V=\<U\>=\overline{\left[ \cup_{n\ge 1}U^{(n)} \right]},
\end{align*}
we get that the relation \eqref{eq:vacuum-like} holds for all $u\in V$.
\end{proof}

\begin{prop}\label{prop:vacuum-like-sp}
Let $(U,Y_U,\vac_U)$ and $(V,Y_V,\vac_V)$ be two $\hbar$-adic nonlocal VAs,
and let $S$ be a $\C[[\hbar]]$-submodule of $U$ such that $U$ is generated by $S$.
Assume that there exists a $U$-module structure $Y_V^U$ on $V$ such that
\begin{align}\label{eq:vacuum-like-total}
  Y_V^U(u,z)\vac_V\in V[[z]].
\end{align}
Suppose that $\psi^0:S\to V$ is a $\C[[\hbar]]$-module map such that
\begin{align}\label{eq:Y-V-U=Y-V-psi}
  Y_V^U(s,z)=Y_V\(\psi^0(s),z\)\quad\te{for }s\in S.
\end{align}
Then there exists an $\hbar$-adic nonlocal VA homomorphism $\psi:U\to V$, such that
\begin{align*}
  \psi(s)=\psi^0(s)\quad\te{for }s\in S.
\end{align*}
\end{prop}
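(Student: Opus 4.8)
The plan is to define $\psi$ on all of $U$ by extending $\psi^0$ through the generation procedure, and to check that the extension is well defined, $\C[[\hbar]]$-linear, continuous, and compatible with the vertex operator maps. The guiding principle is the same bootstrap used in Proposition~\ref{prop:vacuum-like}: the subset $S$ generates $U$ via the tower $S^{(1)}\subset S^{(2)}\subset\cdots$, where at each stage one applies $\Span_{\C[[\hbar]]}(\cdot)$, adjoins $\vac_U$, passes to products $u_mv$, and then takes $[\cdot]$ and closures. I would define $\psi$ compatibly with each of these operations, using the intertwining property of $Y_V^U$ with $Y_V$ to control products.

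First I would record the key identity. For $u,u'\in U$, define the candidate value of $\psi$ on the product $u_mu'$ by $\psi(u)_m\psi(u')$ — but to make sense of this one must already know $Y_V(\psi(u),z)=Y_V^U(u,z)$ as operators on $V$. So the real content is to prove, by induction on $n$, the statement: \emph{there is a $\C[[\hbar]]$-module map $\psi^{(n)}:S^{(n)}\to V$ such that $Y_V^U(u,z)=Y_V(\psi^{(n)}(u),z)$ for all $u\in S^{(n)}$, and $\psi^{(n)}$ restricts to $\psi^{(n-1)}$ on $S^{(n-1)}$.} The base case $S^{(1)}=\overline{[\Span_{\C[[\hbar]]}S\cup\{\vac_U\}]}$ is handled by sending $\vac_U\mapsto\vac_V$, extending $\C[[\hbar]]$-linearly over $\Span S$, then noting that the resulting map intertwines $Y_V^U$ and $Y_V$ on a dense $[\cdot]$-stable submodule and hence extends continuously (here one uses that $V$ is topologically free, so Hausdorff complete, together with Remark~\ref{rem:top-eq} to see the two topologies on $S^{(1)}$ agree). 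For the inductive step, given $\psi^{(n)}$ on $S^{(n)}$, for $u,v\in S^{(n)}$ one computes
\begin{align*}
  Y_V^U(u_mv,z)=Y_V^U(u,z)_mY_V^U(v,z)=Y_V(\psi^{(n)}(u),z)_mY_V(\psi^{(n)}(v),z)=Y_V\((\psi^{(n)}(u))_m\psi^{(n)}(v),z\),
\end{align*}
where the first equality is the definition of the vertex operator on a product inside the $U$-module $V$ via $Y_\E$, and the last is the iterate formula in the $\hbar$-adic nonlocal VA $V$. This forces $\psi$ on products to be $(\psi^{(n)}(u))_m\psi^{(n)}(v)$; extending $\C[[\hbar]]$-linearly gives a map on $(S^{(n)})'$, and then $[\cdot]$ and closure extend it to $\widehat{S^{(n)}}=S^{(n+1)}$, again using topological freeness of $V$ and Remark~\ref{rem:top-eq}. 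Taking the union over $n$ and one final closure yields $\psi$ on $U=\langle S\rangle$.

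Once $\psi:U\to V$ is constructed with $Y_V^U(u,z)=Y_V(\psi(u),z)$ for all $u\in U$, the remaining axioms of a nonlocal VA homomorphism are routine: $\psi(\vac_U)=\vac_V$ by construction; $\C[[\hbar]]$-linearity and continuity hold by construction; and for the compatibility with vertex operators one has, for $u,v\in U$,
\begin{align*}
  \psi(Y_U(u,z)v)=\psi\(\sum_{m\in\Z}(u_mv)z^{-m-1}\),\quad Y_V(\psi(u),z)\psi(v)=Y_V^U(u,z)\psi(v),
\end{align*}
and these agree coefficientwise because $\psi(u_mv)=(\psi(u))_m\psi(v)=\psi(u)_m\psi(v)$ — the middle equality being exactly what the induction established, and the last using again $Y_V(\psi(u),z)=Y_V^U(u,z)$ on $V$. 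Here the hypothesis \eqref{eq:vacuum-like-total}, that $Y_V^U(u,z)\vac_V\in V[[z]]$, is what guarantees $\lim_{z\to 0}Y_V^U(u,z)\vac_V$ makes sense, so that $\psi$ agrees with the canonical identification $u\mapsto\lim_{z\to 0}Y_V^U(u,z)\vac_V$; this is also needed to see that $\psi$ respects $\vac$ and $\partial$.

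The main obstacle I anticipate is \textbf{well-definedness of $\psi$ on products}: an element of $S^{(n+1)}$ can be written as $\sum_i (u^{(i)})_{m_i}v^{(i)}$ in many ways, and one must check $\sum_i(\psi(u^{(i)}))_{m_i}\psi(v^{(i)})$ is independent of the presentation. The resolution is that this sum equals $\sum_i Y_V^U(u^{(i)},z)_{m_i}\psi(v^{(i)})$ applied through the single well-defined object $Y_V^U$, so any relation among the $(u^{(i)})_{m_i}v^{(i)}$ in $U$ — which by definition is a relation holding after applying $Y_U(\cdot,z)$ and pairing — transports to $V$ via the module structure $Y_V^U$ and the already-established identity $Y_V^U=Y_V\circ\psi^{(n)}$ on $S^{(n)}$; a secondary subtlety is that products $u_mv$ need not lie in $S^{(n+1)}$ on the nose but only after multiplying by a power of $\hbar$, so one genuinely needs the $[\cdot]$-completion step and the equivalence of topologies from Remark~\ref{rem:top-eq} to push $\psi$ across. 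Everything else is bookkeeping with the filtration.
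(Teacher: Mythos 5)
Your argument is essentially correct, but it takes a noticeably more laborious route than the paper. The paper's proof sidesteps your entire filtration induction and the well-definedness problem you flag as the main obstacle: it simply \emph{defines} $\psi(u)=\lim_{z\to 0}Y_V^U(u,z)\vac_V$ for all $u\in U$ at once (this is exactly the ``canonical identification'' you mention only in passing at the end), which is automatically $\C[[\hbar]]$-linear and well defined thanks to \eqref{eq:vacuum-like-total}. It then uses weak associativity applied to $\vac_V$ to get the intertwining identity $Y_V^U(u,z_0)\psi(v)=\psi(Y_U(u,z_0)v)$ for \emph{all} $u,v\in U$, and only invokes the hypothesis \eqref{eq:Y-V-U=Y-V-psi} at the very last step, for $s\in S$, before closing with the generation argument. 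In your setup the well-definedness of $\psi$ on a product presentation $\sum_i(u^{(i)})_{m_i}v^{(i)}$ is resolved cleanly by the same observation: your candidate value $w$ satisfies $Y_V(w,z)=Y_V^U(u_mv,z)$, and the creation property recovers $w$ from this operator by applying it to $\vac_V$ and letting $z\to 0$, so $w$ depends only on $u_mv$ and coincides with the paper's formula --- your appeal to ``relations transporting to $V$'' is vaguer than it needs to be, and I would tighten it to this one line. What your approach buys is the stronger intermediate statement $Y_V(\psi(u),z)=Y_V^U(u,z)$ for all $u\in U$ (the paper only needs it for $s\in S$); what it costs is all the bookkeeping with $S^{(n)}$, $[\cdot]$, closures and Remark \ref{rem:top-eq}, none of which the direct definition requires.
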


\begin{proof}
From \eqref{eq:vacuum-like-total}, we define $\psi:U\to V$ by
\begin{align*}
  \psi(u)=\lim_{z\to 0} Y_V^U(u,z)\vac_V\quad\te{for }u\in U.
\end{align*}
We deduce from \eqref{eq:Y-V-U=Y-V-psi} that
\begin{align*}
  \psi(s)=\lim_{z\to 0}Y_V^U(s,z)\vac_V=\lim_{z\to 0}Y_V(\psi^0(s),z)\vac_V=\psi^0(s)\quad\te{for }s\in S.
\end{align*}
Let $u,v\in U$, and let $n$ be an arbitrary non-negative integer.
From the weak associativity, we get that there exists $k\ge 0$, such that
\begin{align}\label{eq:vacuum-like-sp-temp1}
  &(z_0+z_2)^kY_V^U(u,z_0+z_2)Y_V^U(v,z_2)\vac_V\nonumber\\
  \equiv &(z_0+z_2)^kY_V^U(Y_U(u,z_0)v,z_2)\vac_V \mod \hbar^n V[[z_0^{\pm 1},z_2^{\pm 1}]].
\end{align}
From \eqref{eq:vacuum-like-total}, we get that
\begin{align*}
  Y_V^U(Y_U(u,z_0)v,z_2)\vac_V\in V((z_0,z_2))+\hbar^n V[[z_0^{\pm 1},z_2^{\pm 1}]].
\end{align*}
Combining this with \eqref{eq:vacuum-like-sp-temp1}, we get that
\begin{align*}
  Y_V^U(u,z_0+z_2)Y_V^U(v,z_2)\vac_V\equiv Y_V^U(Y_U(u,z_0)v,z_2)\vac_V \mod \hbar^n V[[z_0^{\pm 1},z_2^{\pm 1}]].
\end{align*}
Since the choice of $n$ is arbitrary, we have that
\begin{align}
  Y_V^U(u,z_0+z_2)Y_V^U(v,z_2)\vac_V= Y_V^U(Y_U(u,z_0)v,z_2)\vac_V.
\end{align}
Taking $z_2\to 0$, we get that
\begin{align*}
  Y_V^U(u,z_0)\psi(v)=\psi\(Y_U(u,z_0)v\).
\end{align*}
For any $s\in S$ and $v\in U$, we have that
\begin{align*}
  Y_V(\psi(s),z)\psi(v)=Y_V(\psi^0(s),z)\psi(v)=Y_V^U(s,z)\psi(v)=\psi\(Y_U(u,z)v\).
\end{align*}
Since $U$ is generated by $S$, $\psi$ is an $\hbar$-adic nonlocal VA homomorphism.
\end{proof}

\section{Quantum lattice vertex algebras}\label{sec:qlattice}

We first recall from \cite{Li-smash} the notions and results on smash product $\hbar$-adic nonlocal VAs.
\begin{de}
An {\em $\hbar$-adic nonlocal vertex bialgebra} is an $\hbar$-adic nonlocal vertex algebra $H$ with an ($\hbar$-adic)
coalgebra structure $(\Delta,\epsilon)$ on $H$ such that both $\Delta: H\rightarrow H\wh\otimes H$ and
$\epsilon: H\rightarrow \C[[\hbar]]$ are homomorphisms of $\hbar$-adic nonlocal vertex algebras.
\end{de}

\begin{de}
Let $H$ be an $\hbar$-adic nonlocal vertex bialgebra.
An {\em $H$-module $\hbar$-adic nonlocal VA} is an $\hbar$-adic nonlocal VA $V$
with a module structure $\tau$ on $V$ for $H$ viewed as an $\hbar$-adic nonlocal vertex algebra such that
\begin{align}
  &\tau(a,x)v\in V\wh\otimes \C((z))[[\hbar]],\quad
  \tau(a,z)\vac_V=\varepsilon(a)\vac_V,\label{eq:mod-va-for-vertex-bialg2}\\
  &\tau(a,z_1)Y(u,z_2)v=\sum Y(\tau(a_{(1)},z_1-z_2)u,z_2)\tau(a_{(2)},z_1)v
  \label{eq:mod-va-for-vertex-bialg3}
\end{align}
for $a\in H$, $u,v\in V$, where $\vac_V$ denotes the vacuum vector of $V$
and $\Delta(a)=\sum a_{(1)}\ot a_{(2)}$ is the coproduct in the Sweedler notation.
\end{de}

\begin{thm}\label{thm:smash}
Let $H$ be an $\hbar$-adic nonlocal vertex bialgebra, and let $(V,\tau)$ be an $H$-module $\hbar$-adic nonlocal VA.
Then $V\sharp H=V\wh\ot H$ carries an $\hbar$-adic nonlocal VA structure with vertex operator map defined by
\begin{align*}
  Y^\sharp(u\ot h,z)(v\ot k)=\sum Y(u,z)\tau(h_{(1)},z)v\ot Y(h_{(2)},z)k\quad\te{for }u,v\in V,\,h,k\in H.
\end{align*}
Moreover, let $W$ be a $V$-module and an $H$-module such that
\begin{align*}
    &Y_W(h,z)w\in W\wh\ot \C((z))[[\hbar]],\\
    &Y_W(h,z_1)Y_W(v,z_2)w=\sum Y_W(\tau(h_{(1)},z_1-z_2)v,z_2)Y_W(h_{(2)},z_1)w
\end{align*}
for $h\in S$, $v\in V$, $w\in W$, where $S$ is a generating subset of $H$ as an $\hbar$-adic nonlocal VA.
Then $W$ is a module for $V\sharp H$ with
\begin{align*}
    Y_W^\sharp(v\ot h,z)w=Y_W(v,z)Y_W(h,z)w\quad \te{for }h\in H,v\in V,w\in W.
\end{align*}
\end{thm}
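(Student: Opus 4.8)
The plan is to verify the defining axioms of an $\hbar$-adic nonlocal VA for $V\sharp H=V\wh\ot H$ directly, then bootstrap the module assertion from a generating set. First I would check that $Y^\sharp$ indeed lands in $\E_\hbar(V\sharp H)$: fixing $u\ot h$ and applying $Y^\sharp(u\ot h,z)$ to $v\ot k$, the first tensor factor is $Y(u,z)\tau(h_{(1)},z)v$, which lies in $V\wh\ot\C((z))[[\hbar]]$ by \eqref{eq:mod-va-for-vertex-bialg2} together with the fact that $Y(u,z)$ is in $\E_\hbar(V)$, and the second factor $Y(h_{(2)},z)k\in H\wh\ot\C((z))[[\hbar]]$ since $H$ is an $\hbar$-adic nonlocal VA; finite Sweedler sums preserve this. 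The vacuum vector is $\vac_V\ot\vac_H$: the creation property $Y^\sharp(u\ot h,z)(\vac_V\ot\vac_H)\in (V\sharp H)[[z]]$ with limit $u\ot h$ follows from the vacuum property in $V$, in $H$, and the counit condition $\tau(h_{(1)},z)\vac_V=\varepsilon(h_{(1)})\vac_V$ in \eqref{eq:mod-va-for-vertex-bialg2}, after contracting $\sum\varepsilon(h_{(1)})h_{(2)}=h$; and $Y^\sharp(\vac_V\ot\vac_H,z)=1$ is immediate from $Y(\vac_V,z)=1$, $\tau(\vac_H,z)=\varepsilon(\vac_H)=1$, $Y(\vac_H,z)=1$.

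The substantive point is $\hbar$-adic compatibility of $\set{Y^\sharp(u\ot h,z)}{u\ot h\in V\sharp H}$ and the weak associativity $Y_\E(Y^\sharp(a,z),z_0)Y^\sharp(b,z)=Y^\sharp(Y^\sharp(a,z_0)b,z)$. For compatibility, I would reduce modulo $\hbar^n$ and use that a suitable $(z_i-z_j)^m$ simultaneously makes the $V$-operators, the $\tau$-operators, and the $H$-operators into honest fields (the first two by compatibility in $V$ and the module-VA structure, the last by compatibility in $H$), with $m$ chosen uniformly over the finitely many factors; multiplying the three compatibilizing powers together gives the bound. For weak associativity I would expand both sides using the definition of $Y^\sharp$, the iterate formula \eqref{eq:mod-va-for-vertex-bialg3}, the coassociativity of $\Delta$ (so that $\sum (h_{(1)})_{(1)}\ot(h_{(1)})_{(2)}\ot h_{(2)}=\sum h_{(1)}\ot h_{(2)}\ot h_{(3)}$), and the weak associativity inside $V$ and inside $H$; matching the tensor factors term by term yields the identity. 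This computation, carried out modulo each $\hbar^n$ and then passed to the limit, is the main obstacle — it is essentially the $\hbar$-adic version of the classical smash-product computation, and the bookkeeping of the Sweedler indices against the three fields $Y(u,z)$, $\tau(h,z)$, $Y(h,z)$ is where the care is needed; this is presumably where the cited reference \cite{Li-smash} does the work in the non-$\hbar$-adic case, and I would adapt that argument.

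For the module statement, set $Y_W^\sharp(v\ot h,z)w=Y_W(v,z)Y_W(h,z)w$. That this lies in $\E_\hbar(W)$ and restricts correctly on $\vac_V\ot\vac_H$ is checked as above. It remains to verify $Y_\E(Y_W^\sharp(a,z),z_0)Y_W^\sharp(b,z)=Y_W^\sharp(Y^\sharp(a,z_0)b,z)$; by Proposition \ref{prop:vacuum-like-sp} applied with $U=V\sharp H$ and generating subset $S$ equal to (the $\C[[\hbar]]$-span of) $V\ot\vac_H$ together with $\vac_V\ot S$ — which generates $V\sharp H$ because $V\ot\vac_H$ and $\vac_V\ot H$ do and $H$ is generated by $S$ — it suffices to check the module axiom when $a$ ranges over this generating set and $b$ is arbitrary, and to check the creation property $Y_W^\sharp(a,z)(\vac_V\ot\vac_H)\in W[[z]]$ for such $a$. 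The latter follows from the vacuum/creation properties of $Y_W$ on $V$ and on $H$. For $a\in V\ot\vac_H$ the axiom reduces to $Y_W$ being a $V$-module structure; for $a\in\vac_V\ot S$ it reduces to the two displayed hypotheses on $Y_W(h,z)$ together with \eqref{eq:mod-va-for-vertex-bialg3}, again after a Sweedler-index manipulation parallel to the algebra case. Invoking Proposition \ref{prop:vacuum-like-sp} then upgrades these special cases to the full module axiom, completing the proof.
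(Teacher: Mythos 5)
The paper offers no proof of Theorem \ref{thm:smash}: it is recalled from \cite{Li-smash}, whose smash-product computation carries over to the $\hbar$-adic setting, so your first two paragraphs can only be measured against that reference. There your outline is the right shape: vacuum vector $\vac_V\ot\vac_H$ via the counit identity, $\hbar$-adic compatibility by choosing a common power of $(z_i-z_j)$ modulo each $\hbar^n$ for the three families of fields, and weak associativity by expanding both sides with coassociativity of $\Delta$ and the identity \eqref{eq:mod-va-for-vertex-bialg3}. That is indeed where the work lives, and deferring the bookkeeping to an adaptation of \cite{Li-smash} is reasonable for a recalled result.

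The module half of your proposal, however, has a genuine gap. Proposition \ref{prop:vacuum-like-sp} cannot be used to ``upgrade these special cases to the full module axiom'': its hypotheses already include a \emph{full} $U$-module structure $Y_V^U$ on the target, the target must itself be an $\hbar$-adic nonlocal VA whose vacuum $\vac_V$ plays an essential role, and its conclusion is the existence of a VA homomorphism $U\to V$ extending $\psi^0$ --- it is not a device for verifying that a candidate map satisfies weak associativity on non-generators, and $W$ here carries no vacuum vector. What is actually required is: (a) an induction along the generation procedure $S^{(1)}\subset S^{(2)}\subset\cdots$ showing that the displayed commutation relation between $Y_W(h,z_1)$ and $Y_W(v,z_2)$, assumed only for $h\in S$, propagates to all $h\in H$ (this extension is the entire point of letting $S$ be merely a generating subset; it is analogous to, but not a consequence of, the induction in Proposition \ref{prop:vacuum-like}); and (b) a direct verification of $Y_\E(Y_W^\sharp(v\ot h,z),z_0)Y_W^\sharp(v'\ot k,z)=Y_W^\sharp(Y^\sharp(v\ot h,z_0)(v'\ot k),z)$ for arbitrary $v\ot h$ using the extended relation --- equivalently, the construction of a homomorphism from $V\sharp H$ into the nonlocal VA of fields generated by $\set{Y_W(v,z),\,Y_W(h,z)}{v\in V,\ h\in H}$ inside $\E_\hbar(W)$, which is the route of \cite{Li-smash}. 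As written, step (a) is absent from your sketch and step (b) is delegated to a proposition that does not apply.
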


The following notions and results were given in \cite{JKLT-Quantum-lattice-va}, which are straightforward $\hbar$-adic analogues of the ones given in \cite{JKLT-Defom-va}.

\begin{de}
Let $H$ be an $\hbar$-adic nonlocal vertex bialgebra.  A {\em right $H$-comodule $\hbar$-adic nonlocal vertex algebra} is
an $\hbar$-adic nonlocal vertex algebra $V$ equipped with an $\hbar$-adic nonlocal vertex algebra homomorphism
 $\rho: V\rightarrow V\wh\ot H$  such that
\begin{align}\label{eq:comod-cond}
(\rho\ot 1)\rho=(1\ot\Delta)\rho,\quad
  (1\ot \epsilon)\rho=\te{id}_V,
\end{align}
i.e., $\rho$ is also a right comodule structure on $V$ for $H$ viewed as a coalgebra.
\end{de}

\begin{de}\label{compatible-ma-cma}
Let $H$ be an $\hbar$-adic nonlocal vertex bialgebra and let $V$ be a right $H$-comodule $\hbar$-adic nonlocal vertex algebra
with comodule structure map $\rho:V\rightarrow V\wh\ot H$.
Denote by $\mathfrak L^\rho_H(V)$ the set consisting of each $\C[[\hbar]]$-linear map
\begin{align}
\tau(\cdot,z):\  H\rightarrow \Hom(V,V\wh\ot\C((z))[[\hbar]])
\end{align}
such that $V$ with $\tau(\cdot,z)$ is an $H$-module $\hbar$-adic nonlocal vertex algebra and
$\rho$ is an $H$-module homomorphism with $H$ acting on the first factor of $V\wh\otimes H$ only, i.e.,
\begin{align}\label{compatible-relation}
  \rho(\tau(a,z)v)=(\tau(a,z)\ot 1)\rho(v)\quad\te{for }a\in H,\  v\in V.
\end{align}
\end{de}

\begin{thm}\label{thm:deform-va-h}
Let $H$ be a cocommutative $\hbar$-adic nonlocal vertex bialgebra,
let $(V,\rho)$ be a right $H$-comodule $\hbar$-adic nonlocal vertex algebra,
and let $\tau\in \mathfrak L^\rho_H(V)$.
Define
\begin{align}
\mathfrak D_{\tau}^\rho (Y)(\cdot,z):V\to (\te{End }V)[[z,z^{-1}]];\quad v\mapsto\sum Y(v_{(1)},z)\tau(v_{(2)},z),
\end{align}
where $\rho(v)=\sum v_{(1)}\ot v_{(2)}\in V\wh\ot H$.
Then $(V,\mathfrak D_{\tau}^\rho (Y),\vac)$ carries the structure of an $\hbar$-adic nonlocal vertex algebra,
which is denoted by $\mathfrak D_{\tau}^\rho (V)$.
Furthermore,  $\rho$ is an $\hbar$-adic nonlocal VA homomorphism from $\mathfrak D_\tau^\rho(V)$ to $V\sharp H$.
\end{thm}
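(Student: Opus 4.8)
The plan is to realize $\mathfrak D_\tau^\rho(V)$ inside the smash product $V\sharp H$ through the comodule map $\rho$ and to transport the $\hbar$-adic nonlocal vertex algebra structure of $V\sharp H$ (already available from Theorem~\ref{thm:smash}) back along $\rho$. Everything rests on the single identity
\begin{align*}
  \rho\big(\mathfrak D_\tau^\rho(Y)(u,z)v\big)=Y^\sharp(\rho(u),z)\rho(v)\qquad\te{for }u,v\in V,
\end{align*}
together with $\rho(\vac)=\vac\ot\vac$. This displayed identity is literally the statement that $\rho$ preserves vertex operators from $\mathfrak D_\tau^\rho(V)$ to $V\sharp H$, so once $\mathfrak D_\tau^\rho(V)$ is known to be a nonlocal vertex algebra it simultaneously yields the ``furthermore'' part.

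To prove the identity I would expand both sides in Sweedler notation, writing $\rho(u)=\sum u_{(1)}\ot u_{(2)}$ and $\rho(v)=\sum v_{(1)}\ot v_{(2)}$ with $u_{(1)},v_{(1)}\in V$ and $u_{(2)},v_{(2)}\in H$. For the left side, applying $\rho$ to $\sum Y(u_{(1)},z)\tau(u_{(2)},z)v$ and using that $\rho:V\to V\wh\ot H$ is a homomorphism of $\hbar$-adic nonlocal vertex algebras (for the tensor-product structure on $V\wh\ot H$) followed by the compatibility relation \eqref{compatible-relation} in the form $\rho(\tau(u_{(2)},z)v)=(\tau(u_{(2)},z)\ot 1)\rho(v)$, one obtains
\begin{align*}
  \rho\big(\mathfrak D_\tau^\rho(Y)(u,z)v\big)=\sum\big(Y(u_{(1)(1)},z)\,\tau(u_{(2)},z)v_{(1)}\big)\ot\big(Y_H(u_{(1)(2)},z)v_{(2)}\big),
\end{align*}
where $\rho(u_{(1)})=\sum u_{(1)(1)}\ot u_{(1)(2)}$ and $Y_H$ denotes the vertex operator of $H$. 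For the right side, the explicit smash-product formula of Theorem~\ref{thm:smash} gives
\begin{align*}
  Y^\sharp(\rho(u),z)\rho(v)=\sum\big(Y(u_{(1)},z)\,\tau(u_{(2)(1)},z)v_{(1)}\big)\ot\big(Y_H(u_{(2)(2)},z)v_{(2)}\big),
\end{align*}
with $\Delta(u_{(2)})=\sum u_{(2)(1)}\ot u_{(2)(2)}$. The two expressions differ only in which Sweedler components of $u$ are placed in the three slots (argument of the vertex operator on $V$, argument of $\tau$, argument of the vertex operator on $H$): the first uses the triple $u_{(1)(1)}\ot u_{(2)}\ot u_{(1)(2)}$, the second uses $u_{(1)}\ot u_{(2)(1)}\ot u_{(2)(2)}$. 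The comodule coassociativity \eqref{eq:comod-cond} gives $\sum u_{(1)(1)}\ot u_{(1)(2)}\ot u_{(2)}=\sum u_{(1)}\ot u_{(2)(1)}\ot u_{(2)(2)}$ in $V\wh\ot H\wh\ot H$, so the first triple equals $\sum u_{(1)}\ot u_{(2)(2)}\ot u_{(2)(1)}$, and the cocommutativity of $H$ then turns this into $\sum u_{(1)}\ot u_{(2)(1)}\ot u_{(2)(2)}$, i.e. the second triple. This is the one place where cocommutativity is essential; the equality $\rho(\vac)=\vac\ot\vac$ holds because $\rho$ preserves vacua and $\tau(\vac,z)=\te{id}_V$.

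To conclude, I would exploit $(1\ot\epsilon)\rho=\te{id}_V$: this makes $\rho$ a split injection of $\C[[\hbar]]$-modules, and applying $1\ot\epsilon$ to any element of $\rho(V)\cap\hbar^n(V\sharp H)$ shows $\rho(V)\cap\hbar^n(V\sharp H)=\hbar^n\rho(V)$ for all $n$. Hence $\rho(V)$ is a closed, topologically free $\C[[\hbar]]$-submodule with $[\rho(V)]=\rho(V)$, its own $\hbar$-adic topology agreeing with the subspace topology (Remark~\ref{rem:top-eq}), and $\rho(V)/\hbar^n\rho(V)$ embeds into $(V\sharp H)/\hbar^n(V\sharp H)$. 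By the intertwining identity, $\rho(V)$ is stable under all $Y^\sharp(a,z)$ with $a\in\rho(V)$ and contains the vacuum of $V\sharp H$, hence is an $\hbar$-adic nonlocal vertex subalgebra. All the axioms for $(V,\mathfrak D_\tau^\rho(Y),\vac)$ — the vacuum properties, the truncation condition $\mathfrak D_\tau^\rho(Y)(u,z)v\in V\wh\ot\C((z))[[\hbar]]$ (which is therefore automatic, even though the defining series is a priori only formal), $\hbar$-adic compatibility of $\{\mathfrak D_\tau^\rho(Y)(u,z):u\in V\}$, and weak associativity — then follow by applying $\rho$ and using its injectivity, reading compatibility and weak associativity off modulo $\hbar^n$ after restricting to the embedded submodule $\rho(V)/\hbar^n\rho(V)$; and the intertwining identity is then, tautologically, the assertion that $\rho:\mathfrak D_\tau^\rho(V)\to V\sharp H$ is an $\hbar$-adic nonlocal vertex algebra homomorphism. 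I expect the intertwining identity to be the only substantive step — the real work is keeping the Sweedler indices straight and recognizing that \eqref{eq:comod-cond} together with cocommutativity of $H$ is exactly what reconciles the two expansions — while the residual $\hbar$-adic checks are the routine level-by-level analogues of the classical arguments in \cite{JKLT-Defom-va}.
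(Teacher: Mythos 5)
The paper states Theorem \ref{thm:deform-va-h} without proof, recalling it from \cite{JKLT-Quantum-lattice-va} as a straightforward $\hbar$-adic analogue of \cite{JKLT-Defom-va}, and your argument reproduces exactly the strategy of those sources: establish the intertwining identity $\rho\big(\mathfrak D_\tau^\rho(Y)(u,z)v\big)=Y^\sharp(\rho(u),z)\rho(v)$ by combining the homomorphism property of $\rho$, the compatibility relation \eqref{compatible-relation}, comodule coassociativity \eqref{eq:comod-cond} and the cocommutativity of $H$, and then use the split injectivity $(1\ot\epsilon)\rho=\mathrm{id}_V$ to identify $\rho(V)$ with a closed, topologically free nonlocal vertex subalgebra of $V\sharp H$ and transport the structure back. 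Your Sweedler bookkeeping (in particular the flip of $u_{(2)(1)}\ot u_{(2)(2)}$ absorbed by cocommutativity) and the level-by-level $\hbar$-adic reductions are correct, so the proposal is sound and essentially identical in approach to the cited proof.
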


\begin{rem}
{\em
Let $H$ be a cocommutative $\hbar$-adic nonlocal vertex bialgebra,
and let $(V,\rho)$ be a right $H$-comodule $\hbar$-adic nonlocal vertex algebra.
The counit $\varepsilon$ of $H$ defines an element in $\mathfrak L_H^\rho(V)$, which is still denoted by $\varepsilon$.
This element $\varepsilon$ is defined by
\begin{align*}
\varepsilon(a,z)v=\varepsilon(a)v\    \   \  \mbox{ for }a\in H,\  v\in V.
\end{align*}
Moreover
$\mathfrak D_{\varepsilon}^\rho (V)=V$.}
\end{rem}

\begin{de}
Let $H$ be a cocommutative $\hbar$-adic nonlocal vertex bialgebra,
and let $(V,\rho)$ be a right $H$-comodule $\hbar$-adic nonlocal vertex algebra.
An element $\tau\in\mathfrak L_H^\rho(V)$ is called \emph{invertible}, if there exists another $\tau'\in\mathfrak L_H^\rho(V)$, such that
\begin{align*}
  [\tau(h,z_1),\tau'(k,z_2)]=0\quad\te{for }h,k\in H,
\end{align*}
and $\tau\ast \tau'=\varepsilon$, where
\begin{align*}
  (\tau\ast\tau')(h,z)=\sum \tau(h_{(1)},z)\tau'(h_{(2)},z)\quad\te{for }h\in H.
\end{align*}
We denote $\tau'$ by $\tau\inv$.
\end{de}

\begin{thm}\label{thm:S-op}
Let $H$ be a cocommutative $\hbar$-adic vertex bialgebra and let $(V,\rho)$ be a right $H$-comodule $\hbar$-adic vertex algebra.
Assume $\tau$ is an invertible element of $\mathfrak L_H^\rho(V)$  with inverse $\tau\inv$.
Then $\mathfrak D_\tau^\rho(V)$ is an $\hbar$-adic quantum VA with quantum Yang-Baxter operator $S(z)$ defined by
\begin{align*}
  S(z)(v\ot u)=\sum \tau(u_{(2)},-z)v_{(1)}\ot \tau\inv (v_{(2)},z)u_{(1)}\quad\te{for }u,v\in V.
\end{align*}
\end{thm}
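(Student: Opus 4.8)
The plan is to verify, condition by condition, the defining axioms of an $\hbar$-adic quantum VA for the pair $(\mathfrak D_\tau^\rho(V),S(z))$. By Theorem \ref{thm:deform-va-h} the space $\mathfrak D_\tau^\rho(V)$ is already an $\hbar$-adic nonlocal VA with $\rho$ a homomorphism to $V\sharp H$, so all that remains is to show that $S(z)$ is well defined with codomain $V\wh\ot V\wh\ot\C((z))[[\hbar]]$ and satisfies \eqref{eq:qyb-shift}--\eqref{eq:qyb-hex-id}. Throughout I would use: the $H$-module nonlocal VA axioms \eqref{eq:mod-va-for-vertex-bialg2}--\eqref{eq:mod-va-for-vertex-bialg3} for $(V,\tau)$ and, since $\tau\inv\in\mathfrak L_H^\rho(V)$ as well, for $(V,\tau\inv)$; the comodule axioms \eqref{eq:comod-cond} and the compatibility \eqref{compatible-relation}, again for both $\tau$ and $\tau\inv$; the cocommutativity of $H$; and the relations $[\tau(h,z_1),\tau\inv(k,z_2)]=0$ and $\tau\ast\tau\inv=\varepsilon$. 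I would freely write iterated coproducts $\rho^{(n)}(v)=\sum v_{(1)}\ot\cdots\ot v_{(n+1)}$ (unambiguous by coassociativity), noting that the legs $v_{(2)},\dots,v_{(n+1)}\in H$, obtained by iterating the cocommutative $\Delta$ on a single factor, are symmetric under all permutations among themselves. Two preliminary facts I would record: the canonical derivation $\partial$ of $\mathfrak D_\tau^\rho(V)$ coincides with that of $V$, because $\mathfrak D_\tau^\rho(Y)(v,z)\vac_V=Y(v,z)\vac_V$ by \eqref{eq:mod-va-for-vertex-bialg2} and the counit axiom; and $[\partial,\tau(a,z)]=\frac{d}{dz}\tau(a,z)=\tau(\partial_H a,z)$ for $a\in H$ (likewise for $\tau\inv$), which follows by specializing \eqref{eq:mod-va-for-vertex-bialg3} at $v=\vac_V$, using $Y(u,z)\vac_V=e^{z\partial}u$, and invoking the standard fact that a module over a nonlocal VA intertwines the derivation with $\frac{d}{dz}$.

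I would first dispatch the easy conditions. Well-definedness of $S(z)$ is immediate: $\tau(\cdot,z)$ and $\tau\inv(\cdot,z)$ take values in $\Hom(V,V\wh\ot\C((z))[[\hbar]])$, so each summand of $S(z)(v\ot u)$ lands in $V\wh\ot V\wh\ot\C((z))[[\hbar]]$, and $\C[[\hbar]]$-linearity and $\hbar$-adic convergence follow from completeness of the complete tensor products. Using $\rho(\vac_V)=\vac_V\ot\vac_H$, $\tau(\vac_H,z)=\tau\inv(\vac_H,z)=\id_V$, $\tau(a,z)\vac_V=\tau\inv(a,z)\vac_V=\varepsilon(a)\vac_V$, and $(1\ot\varepsilon)\rho=\id$, one reads off at once that $S(z)(\vac_V\ot v)=\vac_V\ot v$ and $S(z)(v\ot\vac_V)=v\ot\vac_V$, giving \eqref{eq:qyb-hex1} and \eqref{eq:qyb-vac-id-alt}. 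For the shift condition \eqref{eq:qyb-shift} I would differentiate $S(z)(v\ot u)$ in $z$, apply the Leibniz rule, and use the two preliminary facts together with $\rho\circ\partial=(\partial\ot 1+1\ot\partial_H)\circ\rho$; the remaining identities \eqref{eq:qyb-shift-alt}--\eqref{eq:qyb-shift-total2} then follow formally.

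The three braiding identities all admit the same treatment: expand every $S$ (and, where needed, every $\mathfrak D_\tau^\rho(Y)$) by its definition; compute $\rho$ of the inner expressions $\tau(a,z)x$, $\tau\inv(a,z)x$ by \eqref{compatible-relation}; use the symmetry of the $H$-legs of $\rho^{(n)}$ to rearrange coproduct legs and $[\tau(h,z_1),\tau\inv(k,z_2)]=0$ to reorder the $\tau$- and $\tau\inv$-factors; and finally collapse each adjacent pair $\sum\tau(a_{(1)},z)\tau\inv(a_{(2)},z)=\varepsilon(a)$ and apply the counit. Unitarity \eqref{eq:qyb-unitary} is the quickest: after a single expansion of $S^{21}(z)S(-z)(v\ot u)$ the two $\tau$-factors and the two $\tau\inv$-factors pair off---via cocommutativity and $[\tau,\tau\inv]=0$---into $\tau\ast\tau\inv=\varepsilon$ separately on the $u$-legs and the $v$-legs, whereupon the counit returns $v\ot u$. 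The quantum Yang-Baxter equation \eqref{eq:qyb} is the most calculation-heavy, but again purely coalgebraic, requiring only coassociativity, cocommutativity, and the commuting of $\tau$ with $\tau\inv$. For the hexagon identity I would additionally use \eqref{eq:mod-va-for-vertex-bialg3} to move a $\tau$-factor past a vertex operator, and, since \eqref{eq:qyb-hex-id} is equivalent to \eqref{eq:qyb-hex2} in the presence of the other axioms, only one of the two need be checked directly.

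The main obstacle I anticipate is the $S$-locality \eqref{eq:qyb-locality}. The plan is to expand both sides in full: the left side via the definition $\mathfrak D_\tau^\rho(Y)(w,z)=\sum Y(w_{(1)},z)\tau(w_{(2)},z)$ followed by one application of \eqref{eq:mod-va-for-vertex-bialg3} (pushing $\tau(u_{(2)},z_1)$ past $Y(v_{(1)},z_2)$); the right side via $S(z_2-z_1)$ and then the two copies of $\mathfrak D_\tau^\rho(Y)$, computing $\rho$ of the inner expressions $\tau\inv(v_{(2)},z_2-z_1)u_{(1)}$ by \eqref{compatible-relation}, applying \eqref{eq:mod-va-for-vertex-bialg3} once more, and collapsing the $\tau\ast\tau\inv=\varepsilon$ pairs that appear (after using cocommutativity to juxtapose the relevant legs). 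Both sides are thereby reduced to sums of products of the two vertex operators $Y(\cdot,z_1)$ and $Y(\cdot,z_2)$ trailed by $\tau$-factors; reconciling them calls on weak associativity in the nonlocal VA $\mathfrak D_\tau^\rho(V)$---the only point where the equivalence $\sim$, rather than equality, intervenes---together with one more round of cocommutativity and $\tau\ast\tau\inv=\varepsilon$. An organizationally cleaner route would be to transport the entire computation through the injective homomorphism $\rho:\mathfrak D_\tau^\rho(V)\to V\sharp H$ of Theorem \ref{thm:deform-va-h} and carry it out with the explicit smash-product vertex operator $Y^\sharp$, manipulating the $H$-legs directly. Either way, the real difficulty is combinatorial rather than conceptual: tracking which coalgebra leg each $\tau$ or $\tau\inv$ acts through, and verifying that every manipulation is legitimate in the $\hbar$-adic topology (finiteness modulo $\hbar^n$, interchange of the maps $\pi_n$ with the direct limits in the definition of $Y_\E$). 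The argument parallels the classical one of \cite{JKLT-Defom-va} (see also \cite{JKLT-Quantum-lattice-va}), and no new idea should be required. Once \eqref{eq:qyb-shift}--\eqref{eq:qyb-hex-id} are all in hand, $(\mathfrak D_\tau^\rho(V),S(z))$ is an $\hbar$-adic quantum VA, as asserted.
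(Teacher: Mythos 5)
The paper does not actually prove this theorem: it is quoted from \cite{JKLT-Quantum-lattice-va} as a straightforward $\hbar$-adic analogue of the construction in \cite{JKLT-Defom-va}, so there is no in-paper argument to compare against. Your axiom-by-axiom verification plan --- using the comodule/module compatibility \eqref{compatible-relation}, the $H$-module VA axioms, cocommutativity, $[\tau(h,z_1),\tau\inv(k,z_2)]=0$ and $\tau\ast\tau\inv=\varepsilon$ to collapse coproduct legs --- is exactly the strategy of those references and is sound as a sketch, with the caveat that the genuinely laborious steps you correctly single out ($S$-locality, the hexagon identity, and the Yang--Baxter equation) are only outlined rather than carried out.
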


Now, we recall the construction of quantum lattice VAs introduced in \cite{JKLT-Quantum-lattice-va}, which is also a straightforward $\hbar$-adic analogue of the one given in \cite{JKLT-Defom-va}. Set
\begin{align}
    B_Q=\(S(\hat\h^-)\ot \C[Q]\)[[\hbar]].
\end{align}
It is a bialgebra over $\C[[\hbar]]$, where the coproduct $\Delta$ and the counit $\varepsilon$ are defined by
\begin{align*}
  &\Delta(h(-n))=h(-n)\ot 1+1\ot h(-n),\quad \Delta(e^\al)=e^\al\ot e^\al,\quad
  \varepsilon(h(-n))=0,\quad \varepsilon(e^\al)=1
\end{align*}
for $h\in\h$, $n\in\Z_+$, $\al\in Q$.
Moreover, $B_Q$ admits a derivation $\partial$ which is defined by
\begin{align*}
  \partial(h(-n))=nh(-n-1),\quad \partial(u\ot e^\al)=\al(-1)u\ot e^\al+\partial u\ot e^\al
\end{align*}
for $h\in\h$, $n\in\Z_+$, $u\in S(\hat\h^-)$ and $\al\in Q$.
We identify $\h$ (resp. $\C[Q]$) with $\h(-1)\ot 1\subset B_Q$ (resp. $1\ot \C[Q]\subset B_Q$).
Then $B_Q$ becomes a cocommutative and commutative $\hbar$-adic vertex bialgebra with the vertex operator map $Y_{B_Q}(\cdot,z)$ determined by
\begin{align*}
  &Y_{B_Q}(h,z)=\sum_{n>0}h(-n)z^{n-1},\quad Y_{B_Q}(e^\al,z)=e^\al E^+(\al,z)
\end{align*}
for $h\in\h$ and $\al\in Q$.
We view $V_Q[[\hbar]]$ as an $\hbar$-adic VA in the obvious way.

For a $\C[[\hbar]]$-module $W$, and a linear map $\varphi(\cdot,z):\h\to W[[z,z\inv]]$,
we define (see \cite{EK-qva})
\begin{align}\label{eq:def-Phi}
  \Phi(h\ot f(z),\varphi)=\Res_{z_1}\varphi(h,z_1)f(z-z_1).
\end{align}
Define $\Phi(G(z),\varphi)$ for $G(z)\in \h\ot \C((z))[[\hbar]]$ in the obvious way.
And extend the bilinear form $\<\cdot,\cdot\>$ on $\h$ to a $\C((z))[[\hbar]]$-linear form on $\h\ot \C((z))[[\hbar]]$.

\begin{de}
Let
$\mathcal H_Q$ be the set of linear maps
$\eta: \h\rightarrow \h\otimes \C((z))[[\hbar]]$ such that
\begin{align}\label{eq:eta-neg-cond}
    &\eta_0(\al_i,z):=\eta(\al_i,z)|_{\hbar=0}\in \h\ot z\C[[z]],\quad
    \eta(\al_i,z)^-\in \h\ot \hbar\C[z\inv][[\hbar]]\quad\te{for }i\in I.
\end{align}
\end{de}

From \cite{JKLT-Quantum-lattice-va} (cf. \cite{JKLT-Defom-va}), there exists a unique $B_Q$-comodule VA structure $\rho:V_Q[[\hbar]]\to V_Q[[\hbar]]\wh\ot B_Q$, such that
\begin{align*}
    \rho(h)=h\ot 1+1\ot h,\quad \rho(e_\al)=e_\al\ot e^\al\quad \te{for } h\in\h,\,\al\in Q.
\end{align*}
Fix an $\eta\in \mathcal H_Q$.
We also have a unique $B_Q$-module VA structure $\eta(\cdot,z):B_Q\to \E_\hbar(V_Q[[\hbar]])$,
such that
\begin{align*}
    \eta(h,z)=\Phi(\eta'(h,z),Y_Q),\quad \eta(e^\al,z)=\exp\( \Phi(\eta(\al,z),Y_Q) \)
\end{align*}
for $h\in\h$ and $\al\in Q$.
It is easy to check that $\tau$ is an invertible element in $\mathfrak L_{B_Q}^\rho(V_Q[[\hbar]])$.
So we get an $\hbar$-quantum VA $\mathfrak D_\eta^\rho(V_Q[[\hbar]])$.
We denote $\mathfrak D_\eta^\rho(V_Q[[\hbar]])$ by $V_Q^\eta$, and denote its vertex operator map by $Y_Q^\eta$.
The following result was presented in \cite[Theorem 4.5, Proposition 4.9]{JKLT-Quantum-lattice-va}

\begin{thm}\label{thm:qlatticeVA}
The vertex operator map $Y_Q^\eta$ is uniquely determined by
\begin{align*}
    &Y_Q^\eta(h,z)=Y_Q(h,z)+\Phi(\eta'(h,z),Y_Q)\quad \te{for }h\in\h,\\
    &Y_Q^\eta(e_\al,z)=Y_Q(e_\al,z)\exp\( \Phi(\eta(\al,z),Y_Q) \)\quad \te{for }\al\in Q.
\end{align*}
The quantum Yang-Baxter operator $S_Q^\eta(z)$ is determined by ($h,h'\in\h$, $\al,\beta\in Q$):
\begin{align*}
    &S_Q^\eta(z)(h'\ot h)=h'\ot h+\vac\ot\vac\ot (\<\eta''(h',z),h\>-\<\eta''(h,-z),h'\>),\\
    &S_Q^\eta(z)(e_\al\ot h)=e_\al\ot h+e_\al\ot \vac\ot (\<\eta'(\al,z),h\>+\<\eta'(h,-z),\al\>),\\
    &S_Q^\eta(z)(h\ot e_\al)=h\ot e_\al-\vac\ot e_\al\ot (\<\eta'(\al,-z),h\>+\<\eta'(h,z),\al\>),\\
    &S_Q^\eta(z)(e_\beta\ot e_\al)=e_\beta\ot e_\al\ot \exp\( \<\eta(\al,-z),\beta\>-\<\eta(\beta,z),\al\> \).
\end{align*}
Moreover, for $h,h'\in\h$ and $\al,\beta\in Q$, we have that
\begin{align*}
    &Y_Q^\eta(h,z)^-h'=\(\<h,h'\>z^{-2}-\<\eta''(h,z)^-,h'\>\)\vac,\\
    &Y_Q^\eta(h,z)^-e_\al=\( \<h,\al\>z\inv+\<\eta'(h,z)^-,\al\> \)e_\al,\\
    &Y_Q^\eta(e_\al,z)e_\beta=\epsilon(\al,\beta)z^{\<\al,\beta\>}e^{\<\eta(\al,z),\beta\>}E^+(\al,z)e_{\al+\beta}.
\end{align*}
\end{thm}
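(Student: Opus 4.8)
The plan is to obtain both assertions by specializing the general constructions of Theorems~\ref{thm:deform-va-h} and~\ref{thm:S-op} to the explicit lattice data, namely the comodule map $\rho$ and the invertible element $\tau=\eta(\,\cdot\,,z)\in\mathfrak L_{B_Q}^\rho(V_Q[[\hbar]])$. The computations are exactly those of \cite[Theorem 4.5, Proposition 4.9]{JKLT-Quantum-lattice-va} (the $\hbar$-adic counterparts of the corresponding statements in \cite{JKLT-Defom-va}), and I would organize the verification as follows.

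First, since $V_Q^\eta=\mathfrak D_\eta^\rho(V_Q[[\hbar]])$, Theorem~\ref{thm:deform-va-h} gives $Y_Q^\eta(v,z)=\sum Y_Q(v_{(1)},z)\,\eta(v_{(2)},z)$, where $\rho(v)=\sum v_{(1)}\ot v_{(2)}$. Substituting $\rho(h)=h\ot 1+1\ot h$ and $\rho(e_\al)=e_\al\ot e^\al$ and using $\eta(1,z)=\id=Y_Q(\vac,z)$ yields at once the two displayed expressions for $Y_Q^\eta(h,z)$ and $Y_Q^\eta(e_\al,z)$. For the uniqueness clause, observe that $V_Q^\eta$ has the same underlying topologically free $\C[[\hbar]]$-module as $V_Q[[\hbar]]$, which is topologically generated over $\C[[\hbar]]$ by $\h\cup\{e_\al:\al\in Q\}$; since the vertex operator map of any $\hbar$-adic nonlocal VA is forced on all iterated products of a generating set by weak associativity, $Y_\E(Y_Q^\eta(u,z),z_0)Y_Q^\eta(v,z)=Y_Q^\eta(Y_Q^\eta(u,z_0)v,z)$, the map $Y_Q^\eta$ is determined by its values on those generators, hence by the two formulas.

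For the quantum Yang--Baxter operator, Theorem~\ref{thm:S-op} gives $S_Q^\eta(z)(v\ot u)=\sum\tau(u_{(2)},-z)v_{(1)}\ot\tau\inv(v_{(2)},z)u_{(1)}$, so the first task is to identify $\tau\inv$. Writing the convolution identity $\tau\ast\tau\inv=\varepsilon$ on the generators of $B_Q$ and using that $\tau$ and $\tau\inv$ commute: primitivity of the $h(-n)$ forces $\tau\inv(h(-n),z)=-\tau(h(-n),z)$, and group-likeness of $e^\al$ forces $\tau\inv(e^\al,z)=\tau(e^\al,z)\inv$. Substituting these into the formula for $S_Q^\eta(z)$ and expanding the coproducts of $h'\ot h$, $e_\al\ot h$, $h\ot e_\al$ and $e_\beta\ot e_\al$ term by term, every case reduces to evaluating $\Phi(\,\cdot\,,Y_Q)$, and its exponential, on the vectors $\vac$, $h'(-1)\vac$ and $e_\beta$. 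Using the Heisenberg relations $[\al(m),\beta(n)]=m\delta_{m+n,0}\<\al,\beta\>c$, the module-algebra vacuum property $\eta(h,z)\vac=0$, $\eta(e^\al,z)\vac=\vac$, and keeping track of the $\iota$-expansion implicit in $\Phi$, one checks that $\Phi(G(z),Y_Q)\vac=0$, that $\Phi(G(z),Y_Q)\bigl(h'(-1)\vac\bigr)=-\<\tfrac{d}{dz}G(z),h'\>\vac$, and that $\Phi(\eta(\al,z),Y_Q)$ acts on $e_\beta$ as multiplication by the scalar $\<\eta(\al,z),\beta\>$. Feeding these in, together with the substitutions $z\mapsto-z$, produces the four stated formulas for $S_Q^\eta(z)$, where $\eta'$ and $\eta''$ denote the transforms of $\eta$ fixed in \cite{JKLT-Quantum-lattice-va} (obtained from $\eta$ by differentiation in $z$ and the reflection $z\mapsto-z$); by the hexagon identities and generation of $V_Q^\eta$ by $\h\cup\{e_\al\}$, these values determine $S_Q^\eta(z)$ on all of $V_Q^\eta\wh\ot V_Q^\eta$.

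The three ``moreover'' identities are then read off directly: extract the singular part $Y_Q^\eta(h,z)^-$ from the formula for $Y_Q^\eta(h,z)$ just obtained, and compute the action of $Y_Q^\eta(e_\al,z)$ on $e_\beta$ from the formula for $Y_Q^\eta(e_\al,z)$ together with the explicit $Y_Q(h,z)=h(z)$ and $Y_Q(e_\al,z)=E^-(-\al,z)E^+(-\al,z)e_\al z^\al$ of Theorem~\ref{thm:lattice-VA} and the Heisenberg relations; condition \eqref{eq:eta-neg-cond} guarantees that the deformation corrections lie in $\hbar\C[z^{-1}][[\hbar]]$, so that setting $\hbar=0$ recovers the lattice VA $V_Q$. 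I expect the only genuine obstacle to be the bookkeeping for the operators $\Phi(\,\cdot\,,Y_Q)$ and the exponentials $\exp\bigl(\Phi(\eta(\al,z),Y_Q)\bigr)$ --- pinning down the $\iota$-expansion hidden in $\Phi$, checking that the various creation contributions either drop out of the relevant residues or recombine into the displayed closed forms, and confirming that the transforms $\eta'$, $\eta''$ behave correctly under these manipulations. Everything else is a direct, if lengthy, unwinding of Theorems~\ref{thm:deform-va-h} and~\ref{thm:S-op}.
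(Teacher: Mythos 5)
The paper offers no proof of this theorem: it is recalled verbatim from \cite[Theorem 4.5, Proposition 4.9]{JKLT-Quantum-lattice-va}, so there is nothing internal to compare against. Your derivation is the natural one and, as far as I can check, correct: specializing $\mathfrak D_\tau^\rho(Y)$ and the $S$-operator of Theorems~\ref{thm:deform-va-h} and~\ref{thm:S-op} to $\rho(h)=h\ot 1+1\ot h$, $\rho(e_\al)=e_\al\ot e^\al$, with $\tau\inv(h,z)=-\tau(h,z)$ and $\tau\inv(e^\al,z)=\tau(e^\al,z)\inv$, and the three evaluations $\Phi(G(z),Y_Q)\vac=0$, $\Phi(G(z),Y_Q)h'=-\<\tfrac{d}{dz}G(z),h'\>\vac$, $\Phi(\eta(\al,z),Y_Q)e_\beta=\<\eta(\al,z),\beta\>e_\beta$ (all of which I verified from \eqref{eq:def-Phi} and the Heisenberg relations) reproduce each displayed formula term by term, including the signs. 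Two cosmetic points only: $\eta'$ and $\eta''$ are simply the $z$-derivatives of $\eta$ --- the reflection $z\mapsto -z$ is not part of their definition but merely appears in the resulting formulas --- and in the last ``moreover'' identity the factor written $E^+(\al,z)$ follows the $B_Q$-convention (the creation-operator exponential $\exp\bigl(\sum_{n>0}\tfrac{\al(-n)}{n}z^{n}\bigr)$, i.e.\ $E^-(-\al,z)$ in the convention of \eqref{eq:def-E}), which your ``read off directly'' step should make explicit.
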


Combining Theorem \ref{thm:smash} and Theorem \ref{thm:deform-va-h}, we get that

\begin{prop}\label{prop:latticeVA-mod-to-qlatticeVA-mod}
There is a functor $\mathfrak D_\eta$ from the category of $V_Q[[\hbar]]$-modules to the category of $V_Q^\eta$-modules.
More precisely, let $(W,Y_W)$ be a $V_Q[[\hbar]]$-module. Then there exists a $V_Q^\eta$-module structure $Y_W^\eta$ on $W$,
which is uniquely determined by
\begin{align*}
    &Y_W^\eta(h,z)=Y_W(h,z)+\Phi(\eta'(h,z),Y_W)\quad \te{for }h\in\h,\\
    &Y_W^\eta(e_\al,z)=Y_W(e_\al,z)\exp\(\Phi(\eta(\al,z),Y_W)\)\quad \te{for }\al\in Q.
\end{align*}
We denote this module by $W^\eta$.
In addition, let $W_1$ be another $V_Q[[\hbar]]$-module, and let $f:W\to W_1$ be a $V_Q[[\hbar]]$-module homomorphism.
Then $f$ is also a $V_Q^\eta$-module homomorphism.
\end{prop}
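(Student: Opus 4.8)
The plan is to construct the module structure $Y_W^\eta$ via the smash product / comodule machinery, exactly as the quantum lattice VA itself is built, and then to check uniqueness and functoriality by a generating-set argument. First I would recall that $V_Q^\eta = \mathfrak D_\eta^\rho(V_Q[[\hbar]])$ sits inside the smash product $V_Q[[\hbar]]\sharp B_Q$ via the comodule homomorphism $\rho$ (Theorem \ref{thm:deform-va-h}). So it suffices to produce a $V_Q[[\hbar]]\sharp B_Q$-module structure on $W$ and then pull back along $\rho$. By the second half of Theorem \ref{thm:smash}, to equip $W$ with a $V_Q[[\hbar]]\sharp B_Q$-module structure it is enough to give $W$ a $V_Q[[\hbar]]$-module structure (which we have, namely $Y_W$) together with a $B_Q$-module structure $\tau_W$ satisfying the two compatibility conditions in Theorem \ref{thm:smash}, only on a generating set $S$ of $B_Q$. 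For the $B_Q$-action I would take the obvious $\hbar$-adic analogue of $\eta$: set $\tau_W(h,z)=\Phi(\eta'(h,z),Y_W)$ for $h\in\h$ and $\tau_W(e^\al,z)=\exp(\Phi(\eta(\al,z),Y_W))$ for $\al\in Q$, extending multiplicatively/exponentially; the conditions \eqref{eq:eta-neg-cond} on $\eta$ guarantee these are well-defined elements of $\E_\hbar(W)$ (the singular part carries a factor of $\hbar$, so the exponential converges $\hbar$-adically).

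The key steps, in order: (i) verify that $\tau_W(\cdot,z)$ really defines a $B_Q$-module $\hbar$-adic nonlocal VA structure — this is where one uses that $W$ is already a $V_Q[[\hbar]]$-module and that the operators $\Phi(\eta'(h,z),Y_W)$, $\Phi(\eta(\al,z),Y_W)$ are built functorially out of $Y_W$ in exactly the same way the corresponding operators on $V_Q[[\hbar]]$ are built out of $Y_Q$; the identities needed (the module-VA axioms \eqref{eq:mod-va-for-vertex-bialg2}–\eqref{eq:mod-va-for-vertex-bialg3}) are formal consequences of the commutation relations among $\h(n)$-operators and the vertex operators $Y_W(e_\al,z)$, which hold in any $V_Q[[\hbar]]$-module. (ii) Check the two smash-product compatibility relations of Theorem \ref{thm:smash} on the generating set $S=\h\cup\{e^\al:\al\in Q\}$ of $B_Q$; this is the same computation as in \cite{JKLT-Quantum-lattice-va} for $V=W$, word for word, since it only uses the OPE-type relations valid in any module. (iii) Apply Theorem \ref{thm:smash} to get a $V_Q[[\hbar]]\sharp B_Q$-module structure $Y_W^\sharp$ on $W$, then restrict along $\rho:V_Q^\eta\to V_Q[[\hbar]]\sharp B_Q$ to get $Y_W^\eta$. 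Unwinding $\rho(h)=h\ot 1+1\ot h$ and $\rho(e_\al)=e_\al\ot e^\al$ gives precisely the stated formulas $Y_W^\eta(h,z)=Y_W(h,z)+\Phi(\eta'(h,z),Y_W)$ and $Y_W^\eta(e_\al,z)=Y_W(e_\al,z)\exp(\Phi(\eta(\al,z),Y_W))$. (iv) For uniqueness: the stated formulas determine $Y_W^\eta(u,z)$ for $u$ in a generating set of $V_Q$ (namely $\h\cup\{e_\al\}$), and since $V_Q^\eta$ is generated by this set (Theorem \ref{thm:qlatticeVA} gives $V_Q^\eta=V_Q$ as a space, generated by $\h$ and the $e_\al$), the associativity axiom $Y_\E(Y_W^\eta(u,z),z_0)Y_W^\eta(v,z)=Y_W^\eta(Y_Q^\eta(u,z_0)v,z)$ propagates $Y_W^\eta$ uniquely to all of $V_Q^\eta$. (v) For functoriality: if $f:W\to W_1$ is a $V_Q[[\hbar]]$-module map, then $f$ intertwines $Y_W$ and $Y_{W_1}$, hence (since $\Phi(\cdot,Y_\bullet)$ is natural in the vertex operator) it intertwines $\tau_W$ and $\tau_{W_1}$, hence $Y_W^\sharp$ and $Y_{W_1}^\sharp$, hence $Y_W^\eta$ and $Y_{W_1}^\eta$.

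The main obstacle is step (i): checking that $\tau_W$ is a bona fide $B_Q$-module $\hbar$-adic nonlocal VA structure, i.e. that the exponentiated operators $\exp(\Phi(\eta(\al,z),Y_W))$ satisfy the associativity-type relation \eqref{eq:mod-va-for-vertex-bialg3} intertwining with $Y_W$. In the original construction for $V=V_Q[[\hbar]]$ this is where the real work lives, and one must be careful that nothing used there is special to the regular module — in particular that the relevant products of $E^\pm$-factors and the delta-function identities only involve the $\hat\h$-action and the $z^\al$-operator, both of which make sense on any $V_Q[[\hbar]]$-module. Once one observes that every identity invoked in \cite{JKLT-Quantum-lattice-va} for the comodule and module structures is an identity of operators that holds in an arbitrary $V_Q[[\hbar]]$-module (because $V_Q$ is generated by $\h$ and $\{e_\al\}$ and the module axioms transport all OPE relations), the argument goes through verbatim, and steps (ii)–(v) are routine. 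I would therefore phrase the proof as: "all the computations in \cite{JKLT-Quantum-lattice-va} establishing that $\rho$ and $\eta$ make $V_Q[[\hbar]]$ a comodule VA and a $B_Q$-module VA are identities among operators that hold in any $V_Q[[\hbar]]$-module $W$; applying Theorem \ref{thm:smash} and restricting along $\rho$ yields the module structure, and uniqueness and functoriality follow from generation by $\h\cup\{e_\al\}$."
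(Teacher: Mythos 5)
Your proposal is correct and follows essentially the same route as the paper: the paper states this proposition with no written proof beyond the phrase ``Combining Theorem \ref{thm:smash} and Theorem \ref{thm:deform-va-h}, we get that,'' and your argument is exactly that combination spelled out — equip $W$ with the $B_Q$-action built from $\Phi(\cdot,Y_W)$, invoke the module half of Theorem \ref{thm:smash} on the generating set $\h\cup\{e^\al\}$, and restrict along $\rho$, with uniqueness and functoriality following from generation of $V_Q^\eta$ by $\h\cup\{e_{\al}\}$.
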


In \cite[Section 6.5]{LL}, Lepowsky and H. Li proved that the category of $V_Q$-modules is isomorphic to certain module category of an associative algebra $A(Q)$. In the rest of this section, we give an $\hbar$-adic analogue of this module category.
By using this, we prove the completely reducibility of $V_Q^\eta$-modules, and determine all simple $V_Q^\eta$-modules.

\begin{de}
Define $\mathcal A_\hbar^\eta(Q)$ to be the category, where the objects are topologically free $\C[[\hbar]]$-modules $W$ equipped with fields $\al_{i,\hbar}(z),\,e_{i,\hbar}^\pm(z)\in\E_\hbar(W)$ ($i\in I$), satisfying the conditions that ($i,j\in I$):
\begin{align}
  \tag{AQ1}\label{AQ1}&[\al_{i,\hbar}(z_1),\al_{j,\hbar}(z_2)]=\<\al_i,\al_j\>\pd{z_2}z_1\inv\delta\(\frac{z_2}{z_1}\)\\
  &\quad\nonumber-\<\eta''(\al_i,z_1-z_2),\al_j\>+\<\eta''(\al_j,z_2-z_1),\al_i\>,\\
  \tag{AQ2}\label{AQ2}&[\al_{i,\hbar}(z_1),e_{j,\hbar}^\pm(z_2)]=\pm\<\al_i,\al_j\>e_{j,\hbar}^\pm(z_2)z_1\inv\delta\(\frac{z_2}{z_1}\)\\
  &\quad\pm\<\eta'(\al_i,z_1-z_2),\al_j\>
    e_{j,\hbar}^\pm(z_2)\pm\<\eta'(\al_j,z_2-z_1),\al_i\>e_{j,\hbar}^\pm(z_2),\nonumber\\
  \tag{AQ3}\label{AQ3}&\iota_{z_1,z_2}e^{-\<\eta(\al_i,z_1-z_2),\al_j\>}P_{ij}(z_1-z_2)
    e_{i,\hbar}^\pm(z_1)e_{j,\hbar}^\pm(z_2)\\
    &\quad=\iota_{z_2,z_1}e^{-\<\eta(\al_j,z_2-z_1),\al_i\>}P_{ij}(-z_2+z_1)e_{j,\hbar}^\pm(z_2)e_{i,\hbar}^\pm(z_1),\nonumber\\
    &\quad\quad\te{for some }P_{ij}(z)\in\C((z))[[\hbar]],\nonumber\\
  \tag{AQ4}\label{AQ4}&\iota_{z_1,z_2}e^{\<\eta(\al_i,z_1-z_2),\al_j\>}Q_{ij}(z_1-z_2)
    e_{i,\hbar}^\pm(z_1)e_{j,\hbar}^\mp(z_2)\\
    &\quad=\iota_{z_2,z_1}e^{\<\eta(\al_j,z_2-z_1),\al_i\>}Q_{ij}(-z_2+z_1)e_{j,\hbar}^\mp(z_2)e_{i,\hbar}^\pm(z_1),\nonumber\\
    &\quad\nonumber\quad\te{for some }Q_{ij}(z)\in\C((z))[[\hbar]],\\
  \tag{AQ5}\label{AQ5}&\frac{d}{dz}e_{i,\hbar}^\pm(z)
    =\pm \al_{i,\hbar}(z)^+e_{i,\hbar}^\pm(z)\pm e_{i,\hbar}^\pm(z)\al_{i,\hbar}(z)^-
    -\<\eta'(\al_i,0)^+,\al_i\>e_{i,\hbar}^\pm(z),\\
  \tag{AQ6}\label{AQ6}&\iota_{z_1,z_2}e^{\<\eta(\al_i,z_1-z_2),\al_i\>}(z_1-z_2)^{\<\al_i,\al_i\>}
    e_{i,\hbar}^+(z_1)e_{i,\hbar}^-(z_2)\\
    &\quad=\iota_{z_2,z_1}e^{\<\eta(\al_i,z_2-z_1),\al_i\>}(z_1-z_2)^{\<\al_i,\al_i\>}
    e_{i,\hbar}^-(z_2)e_{i,\hbar}^+(z_1),\nonumber\\
  &\te{and}\,\,\left.\(e^{\<\eta(\al_i,z_1-z_2),\al_i\>}(z_1-z_2)^{\<\al_i,\al_i\>}
    e_{i,\hbar}^+(z_1)e_{i,\hbar}^-(z_2)\)\right|_{z_1=z_2}=1.\nonumber
\end{align}
The morphisms between two objects $W_1$ and $W_2$
are $\C[[\hbar]]$-module maps $f:W_1\to W_2$, and
\begin{align*}
    \al_{i,\hbar}(z)\circ f=f\circ \al_{i,\hbar}(z),\quad e_{i,\hbar}^\pm(z)\circ f=f\circ e_{i,\hbar}^\pm(z)\quad \te{for }i\in I.
\end{align*}
\end{de}

By applying Remark \ref{rem:Jacobi-S} to Theorem \ref{thm:qlatticeVA}, one can verify the following result straightforwardly.
\begin{prop}\label{prop:qlatticeVA-mod-to-A-mod}
There exists a functor $\mathfrak I$ from the category of $V_Q^\eta$-modules to the category $\mathcal A_\hbar^\eta(Q)$.
More precisely, let $(W,Y_W^\eta)$ be a $V_Q^\eta$-module. Then
\begin{align*}
    \(W,\{Y_W^\eta(\al_i,z)\}_{i\in I},\{Y_W^\eta(e_{\pm \al_i},z)\}_{i\in I}\)\in \obj \mathcal A_\hbar^\eta(Q),
\end{align*}
where
\begin{align*}
    P_{ij}(z)=z^{-\<\al_i,\al_j\>},\quad Q_{ij}(z)=z^{\<\al_i,\al_j\>}\quad \te{for }i,j\in I.
\end{align*}
Moreover, let $W_1$ be another $V_Q^\eta$-module, and let $f:W\to W_1$ be a $V_Q^\eta$-module.
Then $f$ is also a morphism in $\mathcal A_\hbar^\eta(Q)$.
\end{prop}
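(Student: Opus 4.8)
The plan is to translate the defining axioms of a $V_Q^\eta$-module into the fields relations (AQ1)--(AQ6) by specializing the $S$-locality relation of Remark \ref{rem:Jacobi-S} and the explicit formulas of Theorem \ref{thm:qlatticeVA}. First I would fix a $V_Q^\eta$-module $(W,Y_W^\eta)$ and set $\al_{i,\hbar}(z)=Y_W^\eta(\al_i,z)$ and $e_{i,\hbar}^\pm(z)=Y_W^\eta(e_{\pm\al_i},z)$. These lie in $\E_\hbar(W)$ by the definition of a module for an $\hbar$-adic nonlocal VA. The key input is that, by Remark \ref{rem:Jacobi-S}, for any $u,v\in V_Q^\eta$ the commutator-type relation
\begin{align*}
  Y_W^\eta(u,z_1)Y_W^\eta(v,z_2)-Y_W^\eta(z_2)(1\ot Y_W^\eta(z_1))S_Q^\eta(z_2-z_1)(v\ot u)
  =Y_W^\eta\big(Y_Q^\eta(u,z_1-z_2)^-v-Y_Q^\eta(u,-z_2+z_1)^-v,z_2\big)
\end{align*}
holds exactly in $\End_{\C[[\hbar]]}(W)[[z_1^{\pm1},z_2^{\pm1}]]$ (the $\sim$ in the $S$-locality becomes genuine equality once one adds the singular-part correction). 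Feeding in the formulas for $S_Q^\eta(z)$ and for $Y_Q^\eta(\cdot,z)^-$ from Theorem \ref{thm:qlatticeVA} for the four cases $(u,v)=(\al_i,\al_j),\,(\al_i,e_{\pm\al_j}),\,(e_{\pm\al_i},e_{\pm\al_j}),\,(e_{\pm\al_i},e_{\mp\al_i})$ yields (AQ1)--(AQ2), (AQ3)--(AQ4), and (AQ6) respectively, with $P_{ij}(z)=z^{-\<\al_i,\al_j\>}$ and $Q_{ij}(z)=z^{\<\al_i,\al_j\>}$ coming from the factor $\epsilon(\al,\beta)z^{\<\al,\beta\>}$ in $Y_Q^\eta(e_\al,z)e_\beta$ together with the exponential $e^{\<\eta(\al,z),\beta\>}$.

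Next I would derive (AQ5). This is the $\hbar$-adic deformation of the standard relation $\frac{d}{dz}Y(e_\al,z)=\ :\al(z)Y(e_\al,z):$ on a lattice VA. The cleanest route is to differentiate the generating function: from $Y_Q^\eta(e_{\al_i},z)=Y_Q(e_{\al_i},z)\exp(\Phi(\eta(\al_i,z),Y_Q))$ and the analogous formula in $W$ (Proposition \ref{prop:latticeVA-mod-to-qlatticeVA-mod}), one computes $\frac{d}{dz}Y_W^\eta(e_{\al_i},z)$ using the classical identity for $\frac{d}{dz}Y_W(e_{\al_i},z)$ and the derivative of the exponential factor; the extra term $-\<\eta'(\al_i,0)^+,\al_i\>$ arises as the "constant term'' obstruction when one normal-orders $Y_W^\eta(\al_i,z)^+ e_{i,\hbar}^\pm(z)$ versus $e_{i,\hbar}^\pm(z)Y_W^\eta(\al_i,z)^-$, using the commutation relation (AQ2) already established. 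The normalization condition in (AQ6), namely that $e^{\<\eta(\al_i,z_1-z_2),\al_i\>}(z_1-z_2)^{\<\al_i,\al_i\>}e_{i,\hbar}^+(z_1)e_{i,\hbar}^-(z_2)$ has value $1$ at $z_1=z_2$, follows from the last displayed formula in Theorem \ref{thm:qlatticeVA}, namely $Y_Q^\eta(e_{\al_i},z)e_{-\al_i}$, specialized and combined with $Y_W^\eta(\vac,z)=1_W$; here one uses $\<\al_i,\al_i\>\in 2\Z$ and $\eta_0(\al_i,z)\in\h\ot z\C[[z]]$ from \eqref{eq:eta-neg-cond} to see that the product is a well-defined element of $\E^{(2)}_\hbar(W)$ with the stated value. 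Finally, functoriality is immediate: a $V_Q^\eta$-module homomorphism $f$ intertwines every $Y_W^\eta(v,z)$, in particular those for $v=\al_i,\,e_{\pm\al_i}$, so $f$ is a morphism in $\mathcal A_\hbar^\eta(Q)$.

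The main obstacle I anticipate is bookkeeping rather than conceptual: verifying that the right-hand sides produced by substituting Theorem \ref{thm:qlatticeVA} into Remark \ref{rem:Jacobi-S} really organize themselves into the precise form of (AQ1)--(AQ6), with the $\iota_{z_1,z_2}$/$\iota_{z_2,z_1}$ expansion prescriptions and the $e^{\<\eta(\al_i,z_1-z_2),\al_j\>}$ factors matching on the nose. In particular one must be careful that the singular-part corrections $Y_Q^\eta(u,z_1-z_2)^-v-Y_Q^\eta(u,-z_2+z_1)^-v$ on the right of Remark \ref{rem:Jacobi-S} cancel against the formal $\delta$-function terms appearing when $S_Q^\eta$ is expanded, leaving exactly the rational (in fact Laurent-polynomial-after-clearing-denominators) identities (AQ3), (AQ4), (AQ6); this uses the condition $\eta(\al_i,z)^-\in\h\ot\hbar\C[z\inv][[\hbar]]$ from \eqref{eq:eta-neg-cond} to guarantee $\hbar$-adic convergence of the exponentials $e^{\<\eta(\al_i,z),\al_j\>}$ as elements of $\C((z))[[\hbar]]$. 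Once these matchings are checked term by term, the proposition follows.
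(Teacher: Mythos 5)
Your overall strategy coincides with the paper's: the paper proves this proposition in one line by ``applying Remark \ref{rem:Jacobi-S} to Theorem \ref{thm:qlatticeVA}'', and your case-by-case specialization of the $S$-locality identity with the explicit $S_Q^\eta$ and $Y_Q^\eta(u,z)^-v$ formulas is exactly the intended verification of \eqref{AQ1}--\eqref{AQ4} and \eqref{AQ6}, including the identification $P_{ij}(z)=z^{-\<\al_i,\al_j\>}$, $Q_{ij}(z)=z^{\<\al_i,\al_j\>}$ and the normalization at $z_1=z_2$ via $Y_Q^\eta(e_{\al_i},z)e_{-\al_i}$ and $Y_W^\eta(\vac,z)=1_W$. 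The functoriality remark is also correct and immediate.

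One step as you wrote it is circular, however: for \eqref{AQ5} you invoke ``the analogous formula in $W$ (Proposition \ref{prop:latticeVA-mod-to-qlatticeVA-mod})'', i.e.\ $Y_W^\eta(e_{\al_i},z)=Y_W(e_{\al_i},z)\exp(\Phi(\eta(\al_i,z),Y_W))$. That formula is only available for modules in the image of $\mathfrak D_\eta$, whereas here $(W,Y_W^\eta)$ is an \emph{arbitrary} $V_Q^\eta$-module; the assertion that every such module arises from a $V_Q[[\hbar]]$-module is precisely Theorem \ref{thm:Undeform-qlattice}, whose proof uses the present proposition. The correct route stays inside the module axioms: compute $\partial e_{\al_i}$ in the algebra $V_Q^\eta$ itself (from Theorem \ref{thm:qlatticeVA} one gets $\partial e_{\al_i}=(\al_i)_{-1}^\eta e_{\al_i}-\<\eta'(\al_i,0)^+,\al_i\>e_{\al_i}$, which is where the constant correction term in \eqref{AQ5} originates), then use the $\partial$-derivative property $\frac{d}{dz}Y_W^\eta(v,z)=Y_W^\eta(\partial v,z)$ of modules together with the iterate formula $Y_W^\eta(u_{-1}v,z_2)=\Res_{z_1}\bigl((z_1-z_2)^{-1}Y_W^\eta(u,z_1)Y_W^\eta(v,z_2)-(\text{$S$-twisted reversed term})\bigr)$, extracted from Remark \ref{rem:Jacobi-S}, to rewrite $Y_W^\eta((\al_i)_{-1}^\eta e_{\al_i},z)$ as $Y_W^\eta(\al_i,z)^+e_{i,\hbar}^\pm(z)+e_{i,\hbar}^\pm(z)Y_W^\eta(\al_i,z)^-$ plus the $\eta$-corrections. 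With that replacement the argument is complete and matches the paper's.
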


We will prove the following result in Section \ref{subsec:pf-undeform-qlattice}.

\begin{thm}\label{thm:Undeform-qlattice}
There exists a functor $\mathfrak U_\eta$ from the category $\mathcal A_\hbar^\eta(Q)$ to the category of $V_Q[[\hbar]]$-modules.
Moreover, $\mathfrak U_\eta\circ\mathfrak I\circ \mathfrak D_\eta$ is the identity functor of the category of $V_Q[[\hbar]]$-modules, and $\mathfrak D_\eta\circ \mathfrak U_\eta\circ \mathfrak I$ is the identity functor of the category of $V_Q^\eta$-modules.
Furthermore, the category of $V_Q[[\hbar]]$-modules, the category of $V_Q^\eta$-modules and the category $\mathcal A_\hbar^\eta(Q)$ are isomorphic.
\end{thm}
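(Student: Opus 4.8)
The plan is to reduce everything to the case $\eta=0$, which is the $\hbar$-adic counterpart of \cite[Section 6.5]{LL}. When $\eta=0$ one has $V_Q^0=V_Q[[\hbar]]$ and $\mathfrak I$ becomes the restriction functor $\mathfrak I_0\colon W\mapsto(W,\{Y_W(\al_i,z)\}_{i\in I},\{Y_W(e_{\pm\al_i},z)\}_{i\in I})$, while \eqref{AQ1}--\eqref{AQ6} with the $\eta$-terms deleted are exactly the Heisenberg commutation relations among the $\al_{i,\hbar}(z)$, the commutation relations between the $\al_{i,\hbar}(z)$ and the $e_{j,\hbar}^\pm(z)$, and the lattice operator product relations among the $e_{i,\hbar}^\pm(z)$, for the generating fields of a $V_Q[[\hbar]]$-module. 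First I would prove that $\mathfrak I_0$ is an isomorphism of categories: it is injective on objects and fully faithful because $V_Q$, hence $V_Q[[\hbar]]$, is generated as an $\hbar$-adic nonlocal VA by $\h\cup\{e_{\pm\al_i}\mid i\in I\}$; and it is surjective on objects by the reconstruction argument of \cite[Section 6.5]{LL} carried out over $\C[[\hbar]]$, using the $\hbar$-adic weak associativity machinery of Section \ref{sec:qvas} (the operator $Y_\E$ and Propositions \ref{prop:vacuum-like} and \ref{prop:vacuum-like-sp}) in place of its classical analogue. Write $\mathfrak U_0$ for the inverse of $\mathfrak I_0$.

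For an arbitrary $\eta\in\mathcal H_Q$ I would then build an isomorphism of categories $\mathsf{undef}_\eta\colon\mathcal A_\hbar^\eta(Q)\to\mathcal A_\hbar^0(Q)$ undoing the deformation of the generating fields, and set $\mathfrak U_\eta=\mathfrak U_0\circ\mathsf{undef}_\eta$. Given $(W,\{\al_{i,\hbar}(z)\}_{i\in I},\{e_{i,\hbar}^\pm(z)\}_{i\in I})\in\mathcal A_\hbar^\eta(Q)$, the prescription is to solve the system $\al_{i,\hbar}(z)=a_i(z)+\Phi(\eta'(\al_i,z),a)$, $i\in I$, for fields $a_i(z)\in\E_\hbar(W)$, where $a\colon\h\to\E_\hbar(W)$ is the linear map they determine and $\Phi$ is as in \eqref{eq:def-Phi}. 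Writing $\eta'(\al_i,z)=\sum_j\al_j\ot g_{ij}(z)$, this reads $(1+L)(a)=\al_{\bullet,\hbar}$ for the $\C[[\hbar]]$-linear operator $L$ with $L(b)_i(z)=\sum_j\Res_{z_1}b_j(z_1)\iota_{z,z_1}g_{ij}(z-z_1)$. The key observation is that $L=L^{(0)}+\hbar L^{(1)}$, where $L^{(0)}$ is built from $\eta'(\al_i,z)|_{\hbar=0}\in\h\ot\C[[z]]$ (regular in $z$ because $\eta_0(\al_i,z)\in\h\ot z\C[[z]]$) and hence sends every field to one regular in $z$; since $L^{(0)}$ kills fields regular in $z$, one has $(L^{(0)})^2=0$, so $1+L$ is invertible over $\C[[\hbar]]$. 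Put $a_i(z)=\big((1+L)^{-1}\al_{\bullet,\hbar}\big)_i$ and $x_i^\pm(z)=e_{i,\hbar}^\pm(z)\exp\big(\mp\Phi(\eta(\al_i,z),a)\big)$, the exponentials being well defined because $\Phi(\eta(\al_i,z),a)^-$ is $\hbar$-divisible while the $z^0$-coefficient of $\Phi(\eta(\al_i,z),a)$ is a sum of annihilation modes of the $a_j$. One then checks that $(W,\{a_i(z)\},\{x_i^\pm(z)\})\in\mathcal A_\hbar^0(Q)$ (see below); running the same computation in reverse shows that the opposite assignment $(W,\{a_i\},\{x_i^\pm\})\mapsto\big(W,\{a_i+\Phi(\eta'(\al_i,z),a)\},\{x_i^\pm\exp(\pm\Phi(\eta(\al_i,z),a))\}\big)$ lands in $\mathcal A_\hbar^\eta(Q)$, and invertibility of $1+L$ makes the two mutually inverse; on morphisms everything acts as the identity on underlying $\C[[\hbar]]$-module homomorphisms. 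Hence $\mathsf{undef}_\eta$ is an isomorphism of categories.

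To assemble the conclusion, for every $V_Q[[\hbar]]$-module $W$ Proposition \ref{prop:latticeVA-mod-to-qlatticeVA-mod} gives $Y_W^\eta(\al_i,z)=Y_W(\al_i,z)+\Phi(\eta'(\al_i,z),Y_W)$ and $Y_W^\eta(e_{\pm\al_i},z)=Y_W(e_{\pm\al_i},z)\exp(\pm\Phi(\eta(\al_i,z),Y_W))$, which are exactly the formulas defining $\mathsf{undef}_\eta^{-1}$ applied to $\mathfrak I_0(W)$; hence $\mathfrak I\circ\mathfrak D_\eta=\mathsf{undef}_\eta^{-1}\circ\mathfrak I_0$ as functors $V_Q[[\hbar]]\text{-mod}\to\mathcal A_\hbar^\eta(Q)$. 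Therefore $\mathfrak I\circ\mathfrak D_\eta$ is an isomorphism of categories with inverse $\mathfrak U_0\circ\mathsf{undef}_\eta=\mathfrak U_\eta$, which already yields $\mathfrak U_\eta\circ\mathfrak I\circ\mathfrak D_\eta=\mathrm{id}$ and $\mathfrak I\circ\mathfrak D_\eta\circ\mathfrak U_\eta=\mathrm{id}$. Moreover $\mathfrak I$ is fully faithful, because $V_Q^\eta=\mathfrak D_\eta^\rho(V_Q[[\hbar]])$ is also generated as an $\hbar$-adic nonlocal VA by $\h\cup\{e_{\pm\al_i}\mid i\in I\}$ (by Theorem \ref{thm:qlatticeVA}, which expresses all $e_\al$ and all Heisenberg modes through these generators), and $\mathfrak I$ is surjective on objects since every object of $\mathcal A_\hbar^\eta(Q)$ equals $\mathfrak I(\mathfrak D_\eta W)$ for a suitable $W$; hence $\mathfrak I$ is an isomorphism of categories. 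Consequently $\mathfrak D_\eta=\mathfrak I^{-1}\circ(\mathfrak I\circ\mathfrak D_\eta)$ is an isomorphism with inverse $\mathfrak U_\eta\circ\mathfrak I$, so $\mathfrak D_\eta\circ\mathfrak U_\eta\circ\mathfrak I=\mathrm{id}$. The mutually inverse pairs $(\mathfrak D_\eta,\ \mathfrak U_\eta\circ\mathfrak I)$, $(\mathfrak I,\ \mathfrak D_\eta\circ\mathfrak U_\eta)$, $(\mathfrak I\circ\mathfrak D_\eta,\ \mathfrak U_\eta)$ then exhibit the three categories as isomorphic.

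The main obstacle is the verification, in the second step, that $(W,\{a_i(z)\},\{x_i^\pm(z)\})$ satisfies \eqref{AQ1}--\eqref{AQ6} with $\eta=0$. For \eqref{AQ1} and \eqref{AQ2} this amounts to substituting $\al_{i,\hbar}=a_i+\Phi(\eta'(\al_i,z),a)$ into the commutators and cancelling the $\eta''$- and $\eta'$-correction terms against those on the right-hand sides; since the corrections involve the $a_i$ themselves, the cleanest route is induction on the $\hbar$-adic order, using the invertibility of $1+L$ already established. For \eqref{AQ3}, \eqref{AQ4} and \eqref{AQ6} one needs the normal-ordering (``quantum Wick'') identities for products of the shifted operators $x_i^\pm(z)\exp(\pm\Phi(\eta(\al_i,z),a))$, together with the cocycle relations for $\epsilon$ --- precisely the computations underlying the operator product formulas of Theorem \ref{thm:qlatticeVA}, now run in the reverse direction. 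These are lengthy but routine; all the conceptual content lies in the reduction to $\eta=0$ and the invertibility of $1+L$.
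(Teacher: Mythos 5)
Your proposal is correct and takes essentially the same route as the paper: the heart of both arguments is solving the linear system $a_i(z)+\Phi(\eta'(\al_i,z),a)=\al_{i,\hbar}(z)$ to undeform the Heisenberg fields (the paper's Lemma \ref{lem:linear-sys}), twisting $e_{i,\hbar}^\pm(z)$ by $\exp\(\mp\Phi(\eta(\al_i,z),a)\)$ and checking the $\eta=0$ relations (Proposition \ref{prop:A-Q-rels}), then reconstructing the $V_Q[[\hbar]]$-module structure via the $Z$-operator and group-algebra argument of \cite[Section 6.5]{LL} (Propositions \ref{prop:Z-ops}--\ref{prop:A-eta-mod-to-V-Q-mod}), and finally unwinding the compositions. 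The only differences are cosmetic: you name the intermediate category $\mathcal A_\hbar^0(Q)$ explicitly and establish invertibility of the linear system from $(L^{(0)})^2=0$, whereas the paper encodes the same fact as a lower unitriangular matrix with $\hbar$-divisible off-diagonal entries; both are valid.
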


%
%

Combining Lemmas \ref{lem:simple}, \ref{lem:simple2} and \ref{lem:completely-reducible} with Theorems \ref{thm:Undeform-qlattice} and \ref{thm:lattice-VA}, we have that
\begin{thm}
Every $V_Q^\eta$-module is completely reducible, and for any simple $V_Q^\eta$-module $W$, there exists $Q+\gamma\in Q^0/Q$, such that $W\cong V_{Q+\gamma}^\eta$.
\end{thm}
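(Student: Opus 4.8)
The plan is to transport the known structure theory of $V_Q$-modules (Theorem~\ref{thm:lattice-VA}) through the chain of functors built in this section. Set $V_0=V_Q$, an ordinary VA, hence in particular a nonlocal VA, for which every module is completely reducible and whose simple modules are exactly the $V_{Q+\gamma}$ with $Q+\gamma\in Q^0/Q$. Viewing $V_Q[[\hbar]]=V_0[[\hbar]]$ as an $\hbar$-adic nonlocal VA as in Section~\ref{sec:qlattice}, Lemma~\ref{lem:completely-reducible} applies verbatim and yields that every $V_Q[[\hbar]]$-module is completely reducible; moreover Lemma~\ref{lem:simple2} shows every simple $V_Q[[\hbar]]$-module has the form $W_0[[\hbar]]$ with $W_0$ a simple $V_0=V_Q$-module, so by Theorem~\ref{thm:lattice-VA} such a module is isomorphic to $V_{Q+\gamma}[[\hbar]]$ for some coset, and conversely Lemma~\ref{lem:simple} confirms that each $V_{Q+\gamma}[[\hbar]]$ is a simple $V_Q[[\hbar]]$-module.

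Next I would invoke Theorem~\ref{thm:Undeform-qlattice}: the functor $\mathfrak D_\eta$ of Proposition~\ref{prop:latticeVA-mod-to-qlatticeVA-mod} is an isomorphism from the category of $V_Q[[\hbar]]$-modules onto the category of $V_Q^\eta$-modules, with inverse $\mathfrak U_\eta\circ\mathfrak I$. Complete reducibility and simplicity are properties of the lattice of submodules of a module, hence are preserved by any isomorphism of categories. Therefore every $V_Q^\eta$-module is completely reducible, and $\mathfrak D_\eta$ restricts to a bijection between the isomorphism classes of simple $V_Q[[\hbar]]$-modules and those of simple $V_Q^\eta$-modules.

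Finally I would combine the two previous steps. Applying $\mathfrak D_\eta$ to the simple $V_Q[[\hbar]]$-module $V_{Q+\gamma}[[\hbar]]$ produces, by the explicit module-level formulas in Proposition~\ref{prop:latticeVA-mod-to-qlatticeVA-mod}, precisely the $V_Q^\eta$-module denoted $V_{Q+\gamma}^\eta$. Since, by the first paragraph, every simple $V_Q[[\hbar]]$-module is isomorphic to some $V_{Q+\gamma}[[\hbar]]$, it follows that every simple $V_Q^\eta$-module is isomorphic to some $V_{Q+\gamma}^\eta$, which completes the argument.

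As for the main point requiring care: there is nothing genuinely difficult here, since all the substantive work is concentrated in Theorem~\ref{thm:Undeform-qlattice} together with the deformation-independent Lemmas~\ref{lem:simple}, \ref{lem:simple2} and \ref{lem:completely-reducible}. The one place one must be slightly attentive is that the category isomorphism of Theorem~\ref{thm:Undeform-qlattice} carries $V_{Q+\gamma}[[\hbar]]$ to $V_{Q+\gamma}^\eta$ on the nose, rather than merely to something abstractly isomorphic to it; this is immediate from the explicit description of $\mathfrak D_\eta$ on modules. One should also note that the hypothesis ``every $V_0$-module is completely reducible'' needed in those lemmas is exactly what Theorem~\ref{thm:lattice-VA} supplies for $V_0=V_Q$.
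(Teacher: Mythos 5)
Your proposal is correct and follows exactly the route the paper intends: the theorem is stated there as an immediate consequence of combining Lemmas \ref{lem:simple}, \ref{lem:simple2} and \ref{lem:completely-reducible} with Theorems \ref{thm:Undeform-qlattice} and \ref{thm:lattice-VA}, which is precisely the chain you spell out. Your additional remark that $\mathfrak D_\eta$ carries $V_{Q+\gamma}[[\hbar]]$ to $V_{Q+\gamma}^\eta$ on the nose is a useful detail the paper leaves implicit.
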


\begin{coro}\label{coro:qlattice-universal-property}
Let $(V,Y,\vac_V)$ be an $\hbar$-adic nonlocal VA containing a subset
$\set{\al_i,e_i^\pm}{i\in I}$, such that
the topologically free $\C[[\hbar]]$-module $V$ equipped with the fields
\begin{align*}
  Y(\al_i,z),\quad Y(e_i^\pm,z)\quad \te{for }i\in I
\end{align*}
becomes an object in $\mathcal A_\hbar^\eta(Q)$.
Then there is a unique $\hbar$-adic nonlocal VA homomorphism
$\psi:V_Q^\eta\to V$, such that
\begin{align*}
  \psi(\al_i)=\al_i,\quad \psi(e_{\pm\al_i})=e_i^\pm\quad\te{for }i\in I.
\end{align*}
Moreover, $\psi$ is injective.
\end{coro}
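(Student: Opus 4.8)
The plan is to transport the object $W\in\mathcal A_\hbar^\eta(Q)$ determined by $\{\al_i,e_i^\pm\}$ back along the functors of Theorem \ref{thm:Undeform-qlattice} to obtain a $V_Q^\eta$-module structure on $V$, then to feed this into Propositions \ref{prop:vacuum-like} and \ref{prop:vacuum-like-sp} to produce $\psi$, and finally to deduce injectivity from the simplicity of the adjoint module. First, by hypothesis $W:=\bigl(V,\{Y(\al_i,z)\}_{i\in I},\{Y(e_i^\pm,z)\}_{i\in I}\bigr)$ is an object of $\mathcal A_\hbar^\eta(Q)$; I would put $\wt Y:=(\mathfrak D_\eta\circ\mathfrak U_\eta)(W)$, a $V_Q^\eta$-module structure on $V$. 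Since $\mathfrak D_\eta$, $\mathfrak I$, $\mathfrak U_\eta$ are mutually (cyclically) inverse isomorphisms of categories (Theorem \ref{thm:Undeform-qlattice}), we have $\mathfrak I(V,\wt Y)=W$, which by Proposition \ref{prop:qlatticeVA-mod-to-A-mod} says exactly that $\wt Y(\al_i,z)=Y(\al_i,z)$ and $\wt Y(e_{\pm\al_i},z)=Y(e_i^\pm,z)$ for all $i\in I$; call these $(\star)$.

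Next I would check $\wt Y(u,z)\vac_V\in V[[z]]$ for all $u\in V_Q^\eta$, using Proposition \ref{prop:vacuum-like} applied to the $\hbar$-adic quantum VA $V_Q^\eta$ (with quantum Yang--Baxter operator $S_Q^\eta$ from Theorems \ref{thm:S-op} and \ref{thm:qlatticeVA}), to the module $(V,\wt Y)$, to $U:=\Span_{\C[[\hbar]]}\bigl(\{\vac\}\cup\{\al_i,e_{\pm\al_i}:i\in I\}\bigr)$, and to $w^+:=\vac_V$. Its hypotheses hold: the four explicit formulas for $S_Q^\eta$ in Theorem \ref{thm:qlatticeVA} show $S_Q^\eta(z)(U\wh\ot U)\subset U\wh\ot U\wh\ot\C((z))[[\hbar]]$ (each output lies there because $\vac$ and all $\al_j,e_{\pm\al_j}$ lie in $U$); $V_Q^\eta$ is generated as an $\hbar$-adic nonlocal VA by $\{\al_i,e_{\pm\al_i}\}\subset U$, which I expect to follow from Theorem \ref{thm:qlatticeVA} by the classical argument, using that the coefficient of $z^{\<\al_i,\al_j\>}$ in $Y_Q^\eta(e_{\al_i},z)e_{\al_j}$ is a unit of $\C[[\hbar]]$ times $e_{\al_i+\al_j}$; and \eqref{eq:vacuum-like} holds for $U$ at $\vac_V$, since $\wt Y(\vac,z)=1_V$ while for $u=\al_i$ or $e_{\pm\al_i}$ identity $(\star)$ and the vacuum property \eqref{eq:vacuum-property} of $V$ give $\wt Y(u,z)\vac_V=Y(u,z)\vac_V\in V[[z]]$.

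With this, I would apply Proposition \ref{prop:vacuum-like-sp} with first algebra $V_Q^\eta$, second algebra $V$, generating submodule $S:=\Span_{\C[[\hbar]]}\{\al_i,e_{\pm\al_i}:i\in I\}\subset V_Q^\eta$, module structure $\wt Y$ on $V$ (for which \eqref{eq:vacuum-like-total} was just verified), and the $\C[[\hbar]]$-linear map $\psi^0:S\to V$ with $\psi^0(\al_i)=\al_i$, $\psi^0(e_{\pm\al_i})=e_i^\pm$ (well defined since $\{\al_i,e_{\pm\al_i}\}$ is $\C[[\hbar]]$-linearly independent in $V_Q^\eta$); condition \eqref{eq:Y-V-U=Y-V-psi} is precisely $(\star)$. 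The proposition then yields an $\hbar$-adic nonlocal VA homomorphism $\psi:V_Q^\eta\to V$ with $\psi|_S=\psi^0$, hence $\psi(\al_i)=\al_i$ and $\psi(e_{\pm\al_i})=e_i^\pm$. Uniqueness is then immediate: any two $\hbar$-adic nonlocal VA homomorphisms $V_Q^\eta\to V$ agreeing on $\{\al_i,e_{\pm\al_i}\}$ agree on the minimal closed nonlocal subVA containing this set, which is $V_Q^\eta$.

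Finally, for injectivity I would note that $\ker\psi$ is closed (as the kernel of a continuous $\C[[\hbar]]$-linear map of topologically free modules) and satisfies $[\ker\psi]=\ker\psi$ (if $\hbar^nx\in\ker\psi$ then $\hbar^n\psi(x)=0$, so $\psi(x)=0$ by torsion-freeness of $V$), and that $\psi(a_nx)=\psi(a)_n\psi(x)$ makes it a $V_Q^\eta$-submodule of the adjoint module $(V_Q^\eta,Y_Q^\eta)$. That adjoint module is simple: comparing the defining formulas of $\mathfrak D_\eta$ in Proposition \ref{prop:latticeVA-mod-to-qlatticeVA-mod} with the formulas for $Y_Q^\eta$ in Theorem \ref{thm:qlatticeVA} identifies it as $\mathfrak D_\eta$ applied to the adjoint $V_Q[[\hbar]]$-module, which is simple by Lemma \ref{lem:simple} together with the fact (Theorem \ref{thm:lattice-VA}, $\gamma=0$) that $V_Q$ is a simple $V_Q$-module, and isomorphisms of categories preserve simplicity. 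Hence $\ker\psi\in\{0,V_Q^\eta\}$; since $\psi(\vac)=\vac_V\neq 0$ for $V\neq 0$, it is $0$. The main obstacle, I expect, is not a single hard estimate but keeping the category bookkeeping exact --- that $\mathfrak D_\eta\circ\mathfrak U_\eta$ returns the $V_Q^\eta$-module whose $\al_i$- and $e_{\pm\al_i}$-fields are literally the prescribed ones, and that the adjoint $V_Q^\eta$-module is the $\mathfrak D_\eta$-image of the adjoint $V_Q[[\hbar]]$-module --- together with the two auxiliary facts that $V_Q^\eta$ is generated by $\{\al_i,e_{\pm\al_i}\}$ and that $\Span_{\C[[\hbar]]}\{\vac,\al_i,e_{\pm\al_i}\}$ is $S_Q^\eta$-invariant; with these settled, the rest is a direct appeal to Propositions \ref{prop:vacuum-like} and \ref{prop:vacuum-like-sp}.
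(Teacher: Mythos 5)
Your proposal follows essentially the same route as the paper's proof: obtain the $V_Q^\eta$-module structure on $V$ from Theorem \ref{thm:Undeform-qlattice}, verify the vacuum-like property on $U=\Span_{\C[[\hbar]]}\{\vac,\al_i,e_{\pm\al_i}\}$ and propagate it via Proposition \ref{prop:vacuum-like} using the $S_Q^\eta$-invariance of $U$ from Theorem \ref{thm:qlatticeVA}, construct $\psi$ via Proposition \ref{prop:vacuum-like-sp}, and deduce injectivity from the simplicity of $V_Q^\eta$ as a module over itself together with $\psi(\vac)=\vac_V\neq 0$. The extra details you supply (the category bookkeeping for $\mathfrak D_\eta\circ\mathfrak U_\eta$, the justification that the adjoint module is simple via Lemma \ref{lem:simple}, and the generation of $V_Q^\eta$ by $\{\al_i,e_{\pm\al_i}\}$) are correct elaborations of steps the paper takes for granted.
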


\begin{proof}
Theorem \ref{thm:Undeform-qlattice} provides a $V_Q^\eta$-module structure $Y_V^\eta(\cdot,z)$ on $V$, uniquely determined by
\begin{align}\label{eq:prop-qlattice-universal-temp0}
  &Y_V^\eta(\al_i,z)=Y(\al_i,z),\quad
   Y_V^\eta(e_{\pm\al_i},z)=Y(e_i^\pm,z)\quad\te{for }i\in I.
\end{align}
Let
\begin{align*}
  U=\Span_\C\set{\al_i,\,e_{\pm\al_i},\,\vac}{i\in I}.
\end{align*}
Define $\psi^0:U\to V$ by $\psi^0(\vac)=\vac_V$ and
\begin{align*}
  \psi^0(\al_i)=\al_i,\quad \psi^0(e_{\pm\al_i})=e_i^\pm\quad \te{for }i\in I.
\end{align*}
Then
\begin{align}\label{eq:prop-qlattice-universal-temp1}
  Y_V^\eta(u,z)\vac_V=Y(\psi^0(u),z)\vac_V\in V[[z]]\quad\te{for }u\in U.
\end{align}
From Theorem \ref{thm:qlatticeVA}, we see that
\begin{align*}
  S_Q^\eta(z)(U\ot U)\subset U\ot U\ot \C((z))[[\hbar]].
\end{align*}
Then we get from Proposition \ref{prop:vacuum-like} that
\begin{align}\label{eq:prop-qlattice-universal-temp2}
  Y_V^\eta(u,z)\vac_V\in V[[z]]\quad\te{for }u\in V_Q^\eta.
\end{align}
Notice that $V_Q^\eta$ is generated by $\set{\al_i,e_{\pm\al_i}}{i\in I}$.
Combining this fact with \eqref{eq:prop-qlattice-universal-temp0}, \eqref{eq:prop-qlattice-universal-temp2} and Proposition \ref{prop:vacuum-like-sp}, we get an $\hbar$-adic nonlocal VA homomorphism
\begin{align*}
  \psi:V_Q^\eta\to V,\quad\te{such that }\psi(u)=\psi^0(u)\quad\te{for }u\in U.
\end{align*}

Finally, we show that $\psi$ is injective.
View $V$ as a $V_Q^\eta$-module via $\psi$.
Then $\psi$ is also a $V_Q^\eta$-module homomorphism.
Since $V_Q^\eta$ itself is a simple $V_Q^\eta$-module and $\psi(\vac)=\vac_V\ne 0$, we get that $\psi$ must be injective.
\end{proof}

\begin{coro}\label{coro:h--to-e}
Let $(W,Y_W^\eta)$ be a $V_Q^\eta$-module.
Suppose that there exist $w\in W$ and $\lambda\in\h$ such that
\begin{align*}
  Y_W^\eta(\al_i,z)^-w=\<\lambda,\al_i\>wz\inv+\<\lambda,\eta'(\al_i,z)^-\>w\quad\te{for }i\in I.
\end{align*}
Then
\begin{align*}
  &e^{\mp\<\lambda,\eta(\al_i,z)\>}z^{\mp\<\lambda,\al_i\>}Y_W^\eta(e_{\pm\al_i},z)w\in W[[z]].
\end{align*}
\end{coro}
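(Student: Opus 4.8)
The plan is to analyze the singular part of $Y_W^\eta(e_{\pm\al_i},z)w$ by differentiating the candidate expression and using the commutation relations \eqref{AQ5} and \eqref{AQ2} of $\mathcal A_\hbar^\eta(Q)$ guaranteed by Proposition \ref{prop:qlatticeVA-mod-to-A-mod}. Write $E_i^\pm(z)=Y_W^\eta(e_{\pm\al_i},z)$ and $a_i(z)=Y_W^\eta(\al_i,z)$, and set
\begin{align*}
  g_i^\pm(z)=e^{\mp\<\lambda,\eta(\al_i,z)\>}z^{\mp\<\lambda,\al_i\>}.
\end{align*}
First I would show that the hypothesis forces $a_i(z)^-w$ to be an explicit scalar field times $w$, which controls how $a_i(z)^-$ acts to the right on $E_i^\pm(z)w$; combined with \eqref{AQ2} this lets me compute $a_i(z_1)^- E_i^\pm(z_2)w$ and then specialize. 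The key computation is to apply $\frac{d}{dz}$ to $g_i^\pm(z)E_i^\pm(z)w$: by \eqref{AQ5} the derivative of $E_i^\pm(z)$ produces the terms $\pm a_i(z)^+E_i^\pm(z)\pm E_i^\pm(z)a_i(z)^- -\<\eta'(\al_i,0)^+,\al_i\>E_i^\pm(z)$, and the derivative of $g_i^\pm(z)$ contributes $\mp\<\lambda,\frac{d}{dz}\eta(\al_i,z)\> \mp\<\lambda,\al_i\>z\inv$ times $g_i^\pm(z)E_i^\pm(z)$. Using $\frac{d}{dz}\eta(\al_i,z)=\eta'(\al_i,z)$ together with the hypothesis on $a_i(z)^-w$, I expect the singular contributions coming from $E_i^\pm(z)a_i(z)^-w$ and from $\frac{d}{dz}g_i^\pm(z)$ to cancel exactly, leaving
\begin{align*}
  \frac{d}{dz}\big(g_i^\pm(z)E_i^\pm(z)w\big)=\pm a_i(z)^+\,g_i^\pm(z)E_i^\pm(z)w
  -\<\eta'(\al_i,0)^+,\al_i\>\,g_i^\pm(z)E_i^\pm(z)w.
\end{align*}

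Granting this identity, set $F_i^\pm(z)=g_i^\pm(z)E_i^\pm(z)w\in W((z))[[\hbar]]$ and write $F_i^\pm(z)=\sum_m F_{i,m}^\pm z^{-m-1}$; the identity above is a first-order linear recursion on the coefficients $F_{i,m}^\pm$ in which the operator $a_i(z)^+$ (which only involves nonnegative modes of $\al_i$) raises the index $m$. Since there is some $N$ with $F_{i,m}^\pm=0$ for $m\ge N$ (as $F_i^\pm(z)\in W((z))[[\hbar]]$), reading the recursion in decreasing $m$ and the fact that $a_i(z)^+$ strictly increases the power of $z$ forces, by downward induction, that all coefficients with $m\ge 0$ vanish modulo $\hbar^n$ for every $n$; hence $F_i^\pm(z)\in W[[z]]$. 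Equivalently, one argues that $\Res_z z^m F_i^\pm(z)=0$ for all $m\ge 0$: the recursion expresses such a residue in terms of residues $\Res_z z^{m'}F_i^\pm(z)$ with $m'>m$, which vanish for $m'$ large enough, and one descends. This is the standard "vacuum-like vector" mechanism, here adapted to the deformed fields, so I would also remark that it mirrors the argument in Proposition \ref{prop:vacuum-like}.

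The main obstacle I anticipate is the bookkeeping in the cancellation of singular terms in the derivative identity: one must correctly match $\<\lambda,\eta'(\al_i,z)^-\>$ arising from $a_i(z)^-w$ against $\mp\frac{d}{dz}\<\lambda,\eta(\al_i,z)\>$ and the pole term $\mp\<\lambda,\al_i\>z\inv$ coming from $z^{\mp\<\lambda,\al_i\>}$, keeping track of the $\pm$ signs and of the splitting into regular and singular parts (note $\eta(\al_i,z)^-\in\h\ot\hbar\C[z\inv][[\hbar]]$ by \eqref{eq:eta-neg-cond}, so the differences live in $\hbar W[[z,z\inv]]$ and the $\hbar$-adic descent in Proposition \ref{prop:qlatticeVA-mod-to-A-mod}'s style is needed). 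A secondary subtlety is justifying that $a_i(z_1)^-$ and $E_i^\pm(z_2)$ can be reorganized via \eqref{AQ2} so that $a_i(z)^-$ ends up acting directly on $w$; this is where the precise form of \eqref{AQ2}, with its two $\eta'$-correction terms, is used, and one should check that the extra terms it produces are again absorbed into $g_i^\pm(z)$. Once these cancellations are verified the rest is the routine descending recursion sketched above.
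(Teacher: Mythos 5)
Your argument is correct in substance but takes a genuinely different route from the paper's. The paper first \emph{undeforms}: it invokes Theorem \ref{thm:Undeform-qlattice} to produce the underlying $V_Q[[\hbar]]$-module structure $Y_W$ on $W$, shows by solving a triangular linear system that the hypothesis forces $\al_i(n)w=\delta_{n,0}\<\lambda,\al_i\>w$ for all $n\ge 0$ (where $\al_i(n)$ are the modes of the \emph{undeformed} field $Y_W(\al_i,z)$), and then reads the conclusion off the explicit factorization $Y_W^\eta(e_{\pm\al_i},z)w=E^-(\mp\al_i,z)e_{\pm\al_i}w\,z^{\pm\<\lambda,\al_i\>}e^{\pm\<\lambda,\eta(\al_i,z)\>}$, since $E^-$ contributes only nonnegative powers of $z$. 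You instead stay entirely on the deformed side and extract regularity from the differential relation \eqref{AQ5} supplied by Proposition \ref{prop:qlatticeVA-mod-to-A-mod}; this works, is self-contained, and trades the explicit lattice formula for the mode recursion at the end (which is indeed the same descending $\hbar$-adic mechanism as in Proposition \ref{prop:vacuum-like}). Two corrections to your computation. First, the cancellation is not exact as you state it: differentiating $g_i^\pm(z)$ produces $\mp\<\lambda,\eta'(\al_i,z)\>$, of which only the singular part cancels against the $\pm\<\lambda,\eta'(\al_i,z)^-\>$ coming from the hypothesis via $E_i^\pm(z)a_i(z)^-w$; the correct identity is $\frac{d}{dz}F_i^\pm(z)=\bigl(\pm a_i(z)^+\mp\<\lambda,\eta'(\al_i,z)^+\>-\<\eta'(\al_i,0)^+,\al_i\>\bigr)F_i^\pm(z)$ with $F_i^\pm(z)=g_i^\pm(z)E_i^\pm(z)w$. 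Since the extra term is a regular scalar series, the coefficient of your first-order equation still involves only nonnegative powers of $z$ and the recursion is unaffected. Second, your anticipated "secondary subtlety" about reorganizing via \eqref{AQ2} does not arise: in \eqref{AQ5} the annihilation part $a_i(z)^-$ already sits to the right of $e_{i,\hbar}^\pm(z)$, so it acts directly on $w$ and the hypothesis applies verbatim.
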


\begin{proof}
Theorem \ref{thm:Undeform-qlattice} provides a $V_Q[[\hbar]]$-module structure $Y_W$ on $W$, such that
\begin{align}
  &Y_W^\eta(\al_i,z)=Y_W(\al_i,z)+\Phi(\eta'(\al_i,z),Y_W),\label{eq:coro-h--to-e-temp1}\\
  &Y_W^\eta(e_{\pm\al_i},z)=Y_W(e_{\pm\al_i},z)\exp\(\pm\Phi(\eta(\al_i,z),Y_W)\),
\end{align}
for $i\in I$.
Write
\begin{align*}
  Y_W(\al_i,z)=\sum_{n\in\Z}\al_i(n)z^{-n-1},\quad \eta(\al_i,z)=\sum_{j\in I}\al_j\ot f_{ij}(z).
\end{align*}
Taking $\Res_zz^n$ on both hand sides of \eqref{eq:coro-h--to-e-temp1} for $n\in\N$, we get that
\begin{align*}
  \Res_zz^n Y_W^\eta(\al_i,z)w
  =&\Res_zz^nY_W(\al_i,z)w+\sum_{j\in I}\Res_{z,z_1}z^n\pd{z}f_{ij}(z-z_1)Y_W(\al_j,z_1)w \\
  =&\al_i(n)w+\sum_{k\ge 0}\sum_{j\in I}\binom{n}{k}\al_j(k)w\Res_zz^{n-k}\pd{z}f_{ij}(z)\\
  =&\al_i(n)w+\sum_{k=0}^{n-1}\binom{n}{k}\sum_{j\in I}\al_j(k)w\Res_zz^{n-k}\pd{z}f_{ij}(z).
\end{align*}
Viewing $\al_i(n)w$ ($i\in I$, $n\in\N$) as indeterminates, the linear system above has the unique solution:
\begin{align*}
  \al_i(n)w=\delta_{n,0}\<\lambda,\al_i\>w,\quad i\in I,\,n\in\N.
\end{align*}
Then
\begin{align*}
  &Y_W^\eta(e_{\pm\al_i},z)w=Y_W(e_{\pm\al_i},z)\exp\(\pm\Phi(\eta(\al_i,z),Y_W)\)w\\
  =&E^-(\mp\al_i,z)e_{\pm\al_i}E^+(\mp\al_i,z)z^{\pm\al_i}\exp\(\pm\Phi(\eta(\al_i,z),Y_W)\)w\\
  =&E^-(\mp\al_i,z)e_{\pm\al_i}w z^{\pm\<\lambda,\al_i\>}\exp\(\pm\<\lambda,\eta(\al_i,z)\>\).
\end{align*}
As $E^-(\mp\al_i,z)e_{\pm\al_i}w\in W[[z]]$, we complete the proof.
\end{proof}

\section{Proof of Theorem \ref{thm:Undeform-qlattice}}\label{subsec:pf-undeform-qlattice}
In this section, we fix an object
\begin{align*}
(W,\{\al_{i,\hbar}(z)\}_{i\in I},\{e_{i,\hbar}^\pm(z)\}_{i\in I})\in\obj\mathcal A_\hbar^\eta(Q).
\end{align*}

\begin{lem}\label{lem:linear-sys}
Let $V$ be a $\C[[\hbar]]$-module. For each linear map $\varphi(\cdot,z):\h\to V[[z^{\pm1}]]$, there exists a unique linear map $\bar\varphi(\cdot,z):\h\to V[[z^{\pm1}]]$, such that
\begin{align}\label{eq:linear-sys}
  \bar\varphi(\al_i,z)+\Phi(\eta'(\al_i,z),\bar\varphi)=\varphi(\al_i,z)\quad\te{for }i\in I.
\end{align}
Moreover, if $\varphi(\h,z)^-\subset V[z\inv][[\hbar]]$, then
\begin{align*}
  \bar\varphi(\h,z)^-\subset V[z\inv][[\hbar]].
\end{align*}
\end{lem}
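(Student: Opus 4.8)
The plan is to solve \eqref{eq:linear-sys} for $\bar\varphi$, and to read off uniqueness, by a recursion. Since $Q=\bigoplus_{i\in I}\Z\al_i$ has finite rank, $\{\al_i\}_{i\in I}$ is a finite $\C$-basis of $\h$, so a $\C$-linear map $\bar\varphi(\cdot,z)\colon\h\to V[[z^{\pm1}]]$ is nothing but the finitely many elements $\bar\varphi(\al_i,z)$, and \eqref{eq:linear-sys} is then a $\C[[\hbar]]$-linear system in these unknowns. The key idea is to split each field into its regular and singular parts in $z$ and to exploit the two defining conditions \eqref{eq:eta-neg-cond} of $\eta$.

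First I would record how the operator $\psi\mapsto\bigl(\Phi(\eta'(\al_i,z),\psi)\bigr)_{i\in I}$ interacts with that splitting. Writing $\eta'=\partial_z\eta$, condition \eqref{eq:eta-neg-cond} gives $\eta'(\al_i,z)^-=\partial_z\bigl(\eta(\al_i,z)^-\bigr)\in\h\ot\hbar\,\C[z^{-1}][[\hbar]]$ — divisible by $\hbar$, with a pole at $z=0$ of bounded order — while $\eta'(\al_i,z)^+\in\h\ot\C[[z]][[\hbar]]$ is regular. Feeding this into \eqref{eq:def-Phi} and expanding $\eta'(\al_i,z-z_1)$ in $z_1$ (Taylor expansion for the regular part, geometric expansion for the polar part), I would verify three things: $\Phi(\eta'(\al_i,z),\psi)$ depends only on the singular part $\psi(\cdot,z)^-$; the piece $\Phi(\eta'(\al_i,z)^+,\psi)$ is regular in $z$; and $\Phi(\eta'(\al_i,z)^-,\psi)$ is singular in $z$, divisible by $\hbar$, and raises the pole order of $\psi(\cdot,z)^-$ by at least one (and by at most a fixed bound).

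With these facts \eqref{eq:linear-sys} decouples. Its singular-in-$z$ part is the closed system
\[
\bar\varphi(\al_i,z)^-+\hbar\,N_i\bigl(\bar\varphi(\cdot,z)^-\bigr)=\varphi(\al_i,z)^-\qquad(i\in I),
\]
where $\hbar N_i(\cdot)=\Phi(\eta'(\al_i,z)^-,\cdot)$, so that $N=(N_i)_{i\in I}$ is a $\C[[\hbar]]$-linear operator that strictly raises the pole order; hence $\mathrm{id}+\hbar N$ is invertible with inverse $\sum_{k\ge0}(-\hbar N)^k$, a series that is finite on every Laurent coefficient, and $\bar\varphi(\al_i,z)^-$ is forced. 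Substituting back, the regular-in-$z$ part of \eqref{eq:linear-sys} reads
\[
\bar\varphi(\al_i,z)^+=\varphi(\al_i,z)^+-\Phi\bigl(\eta'(\al_i,z)^+,\bar\varphi(\cdot,z)^-\bigr)\qquad(i\in I),
\]
which determines $\bar\varphi(\al_i,z)^+$ outright. Setting $\bar\varphi(\al_i,z)=\bar\varphi(\al_i,z)^++\bar\varphi(\al_i,z)^-$ and extending $\C$-linearly yields a solution, and the construction makes plain it is the only one. For the last assertion: if $\varphi(\h,z)^-\subset V[z^{-1}][[\hbar]]$, then since each application of $\hbar N$ raises the $\hbar$-order by one while raising the pole order by at most a fixed bound, every $\hbar$-homogeneous layer of $\sum_{k\ge0}(-\hbar N)^k\varphi(\al_i,z)^-$ is a Laurent polynomial in $z^{-1}$, whence $\bar\varphi(\h,z)^-\subset V[z^{-1}][[\hbar]]$.

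\textbf{Main obstacle.} I expect the real work to be the simultaneous bookkeeping of the two filtrations — the pole order in $z$ and the $\hbar$-order — required to guarantee that (i) all the residues, expansions, and geometric series occurring above converge coefficient by coefficient in $V[[z^{\pm1}]]$, and (ii) the ``singular subsystem'' above is genuinely closed, i.e. that $\eta'(\al_i,z)^+$ cannot feed the regular part of $\bar\varphi$ back into the singular equations. Once those structural observations are secured, the remaining recursion is routine.
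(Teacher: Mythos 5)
Your proposal is correct and follows essentially the same route as the paper: the paper extracts residues from \eqref{eq:linear-sys} to obtain a lower-unitriangular linear system over $\C[[\hbar]]$ in the singular coefficients $\bar\al_i(n)$, $n\in\N$ (your $(\mathrm{id}+\hbar N)$ acting on $\bar\varphi^-$), inverts it using exactly the $\hbar$-divisibility and bounded-pole-order-modulo-$\hbar^N$ properties of $\eta$ that you isolate from \eqref{eq:eta-neg-cond}, and then determines the regular coefficients by back-substitution, with the ``moreover'' clause following from the same two properties of the inverse. The only phrasing to tighten is your claim that $\hbar N$ raises the pole order ``by at most a fixed bound'': the bound depends on the $\hbar$-order under consideration, which is in fact how you use it when arguing $\hbar$-layer by $\hbar$-layer.
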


\begin{proof}
Write
\begin{align*}
  \varphi(\al_i,z)=\sum_{n\in\Z}\al_i(n)z^{-n-1},\quad
  \bar\varphi(\al_i,z)=\sum_{n\in\Z}\bar\al_i(n)z^{-n-1},\quad \eta(\al_i,z)=\sum_{j\in I}\al_j\ot f_{ij}(z),
\end{align*}
and write $f_{ij}(z)=\sum_{n\in\Z}f_{ij}(n)z^{-n-1}$.
We view $\bar\al_i(n)$ ($i\in I$, $n\in\Z$) as indeterminates.
Taking $\Res_zz^n$ on both hand sides of \eqref{eq:linear-sys} for $n\in\N$, we get the following linear system
\begin{align*}
  &\al_i(n)=\Res_zz^n\varphi(\al_i,z)\\
  =&\bar\al_i(n)+\sum_{j\in I}\sum_{k\in\N}\Res_zz^n\bar\al_i(k)\frac{(-1)^k}{k!}\pdiff{z}{k+1}f_{ij}(z)\\
  =&\bar\al_i(n)+\sum_{j\in I}\sum_{k\in\N}\binom{n}{k}\bar\al_i(k)\Res_zz^{n-k}\pd{z}f_{ij}(z)\\
  =&\bar\al_i(n)-\sum_{j\in I}\sum_{k=0}^{n-1}\binom{n}{k}(n-k)f_{ij}(n-k-1)\bar\al_i(k).
\end{align*}
Since $I$ is a finite set, we write $I=\{1,2,\dots,m\}$.
Introduce a total order on the set $I\times\N$ as follows:
\begin{align*}
  (i, n_1)<(j, n_2)\Longleftrightarrow n_1<n_2\,\,\te{or}\,\,(n_1=n_2\,\,\te{and}\,\,i<j).
\end{align*}
Note that there exists a bijective map from $I\times \N$ to $\N$ that preserves the total orders.
So we can rearrange the set $\{\al_i(n)\}_{i\in I,n\in\N}$ (resp. $\{\bar\al_i(n)\}_{i\in I,n\in\N}$) as follows
$\{\al_{[n]}\}_{n\in\N}$ (resp. $\{\bar\al_{[n]}\}_{n\in\N}$).
Then the linear system given above can be rewritten as follows
\begin{align}\label{eq:lin-sys}
  C(\bar\al_{[1]},\bar\al_{[2]},\dots)^t=(\al_{[1]},\al_{[2]},\dots)^t,
\end{align}
where $C=(c_{ij})_{i,j\in\N}$ is a lower unitriangular matrix.
From the assumption of $\eta$ (see \eqref{eq:eta-neg-cond}), we see that $C$ satisfies the following conditions:

(C1) $c_{ij}\in\hbar\C[[\hbar]]$ for any $i>j$;
(C2) for each $N\in\Z_+$, there exists $k\in\Z_+$, such that for any $j\in\N$, one has that $c_{ij}\in\hbar^N\C[[\hbar]]$ for all $i>j+k$.

Then $C$ is invertible and its inverse $C\inv$ is also a lower unitriangular matrix satisfying the above two conditions (C1) and (C2).
It means that the linear system \eqref{eq:lin-sys} has a unique solution.
Hence, $\bar\al_i(n)$ ($i\in I$, $n\in\N$) are uniquely determined.
Taking $\Res_zz^{-n-1}$ on both hand sides of \eqref{eq:linear-sys} for $n\in\N$, we get that
\begin{align*}
  &\al_i(-n-1)=\Res_zz^{-n-1}\varphi(\al_i,z)\\
  =&\bar\al_i(-n-1)+\sum_{j\in I}\sum_{k\in\N}\Res_zz^{-n-1}\bar\al_i(k)\frac{(-1)^k}{k!}\pdiff{z}{k+1}f_{ij}(z)\\
  =&\bar\al_i(-n-1)-\sum_{j\in I}\sum_{k\in\N}(-1)^k\binom{n+k}{k}(n+k+1)f_{ij}(-n-k-3)\bar \al_i(k).
\end{align*}
So $\bar\al_i(-n-1)$ ($i\in I$, $n\in\N$) are also uniquely determined.
The moreover statements follows from the fact that $C\inv$ satisfies the conditions (C1) and (C2).
\end{proof}

Under the consideration of Lemma \ref{lem:linear-sys}, we have the following definition.

\begin{de}
Let $\varphi(\cdot,z):\h\to \E_\hbar(W)$ be the linear map determined by
\begin{align}\label{eq:def-beta-i}
  \varphi(\al_i,z)+\Phi(\eta'(\al_i,z),\varphi)=\al_{i,\hbar}(z)\quad\te{for }i\in I.
\end{align}
\end{de}

\begin{lem}
For $i,j\in I$, we have that
\begin{align}
  &[\varphi(\al_i,z_1),\al_{j,\hbar}(z_2)]=\<\al_i,\al_j\>\pd{z_2}z_1\inv\delta\(\frac{z_2}{z_1}\)
    +\<\eta''(\al_j,z_2-z_1),\al_i\>,\label{eq:bar-h-h-com}\\
  &[\varphi(\al_i,z_1),e_{j,\hbar}^\pm(z_2)]=\pm\<\al_i,\al_j\>e_{j,\hbar}^\pm(z_2)z_1\inv\delta\(\frac{z_2}{z_1}\)
  \label{eq:bar-h-e-com}
  \pm\<\eta'(\al_j,z_2-z_1),\beta_i\>e_{j,\hbar}^\pm(z_2),\\
  &[\varphi(\al_i,z_1),\varphi(\al_j,z_2)]=\<\al_i,\al_j\>\pd{z_2}z_1\inv\delta\(\frac{z_2}{z_1}\).\label{eq:AQ-1}
\end{align}
\end{lem}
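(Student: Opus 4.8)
All three identities will be derived from the defining relation \eqref{eq:def-beta-i} by a single device, namely the uniqueness part of Lemma \ref{lem:linear-sys}. The idea is: apply $[\,\cdot\,,X(z_2)]$ to \eqref{eq:def-beta-i}, where $X(z_2)$ is one of $\al_{j,\hbar}(z_2)$, $e_{j,\hbar}^\pm(z_2)$, and observe that $\Phi(\eta'(\al_i,z_1),\,\cdot\,)$ is assembled from residues which commute with $[\,\cdot\,,X(z_2)]$. This yields, as an identity in $\End_{\C[[\hbar]]}(W)[[z_1^{\pm1},z_2^{\pm1}]]$,
\begin{align*}
  \Psi(\al_i,z_1)+\Phi(\eta'(\al_i,z_1),\Psi)=[\al_{i,\hbar}(z_1),X(z_2)],
\end{align*}
where $\Psi$ is the linear map $\al_i\mapsto[\varphi(\al_i,z_1),X(z_2)]$, viewed as a map $\h\to V[[z_1^{\pm1}]]$ with $V=\End_{\C[[\hbar]]}(W)[[z_2^{\pm1}]]$. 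By Lemma \ref{lem:linear-sys}, $\Psi$ is the unique linear map with this property, so it suffices to produce a candidate — the right-hand side of the asserted identity, read as a linear map in $\al_i$ — and check that it solves the same equation, the inhomogeneous term now being the expression given by the axiom \eqref{AQ1} (resp. \eqref{AQ2}).

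\textbf{The identities \eqref{eq:bar-h-h-com} and \eqref{eq:bar-h-e-com}.} For \eqref{eq:bar-h-h-com}, take $X(z_2)=\al_{j,\hbar}(z_2)$ and the candidate $B_i:=\<\al_i,\al_j\>\pd{z_2}z_1\inv\delta(z_2/z_1)+\<\eta''(\al_j,z_2-z_1),\al_i\>$. One computes $B_i+\Phi(\eta'(\al_i,z_1),B_\bullet)$ by a residue calculation: using the derivative relations among $\eta,\eta',\eta''$ recorded in \cite{JKLT-Quantum-lattice-va} (in particular $\eta''=\tfrac{d}{dz}\eta'$), the $\delta$-function part of $B_\bullet$ contributes exactly $-\<\eta''(\al_i,z_1-z_2),\al_j\>$, while the contribution of the $\eta''$-part of $B_\bullet$ is a residue $\Res_{z'}$ of a series regular in $z'$, hence vanishes. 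Thus $B_i+\Phi(\eta'(\al_i,z_1),B_\bullet)$ equals the right-hand side of \eqref{AQ1}, and uniqueness gives $[\varphi(\al_i,z_1),\al_{j,\hbar}(z_2)]=B_i$. The proof of \eqref{eq:bar-h-e-com} is identical with $X(z_2)=e_{j,\hbar}^\pm(z_2)$ and \eqref{AQ2} replacing \eqref{AQ1}; the candidate is the asserted right-hand side (with $\beta_i$ read as $\al_i$), and again the $\Phi$-convolution of $\eta'(\al_i,z_1)$ with the non-$\delta$ part of the candidate is a residue of a $z'$-regular series, hence zero.

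\textbf{The identity \eqref{eq:AQ-1}.} Here the same device is applied, but substituting \eqref{eq:def-beta-i} into the \emph{second} slot: writing $\al_{j,\hbar}(z_2)=\varphi(\al_j,z_2)+\Phi(\eta'(\al_j,z_2),\varphi)$ gives
\begin{align*}
  C_j+\Phi\bigl(\eta'(\al_j,z_2),C_\bullet\bigr)=[\varphi(\al_i,z_1),\al_{j,\hbar}(z_2)],
\end{align*}
where $C_j:=[\varphi(\al_i,z_1),\varphi(\al_j,z_2)]$, now viewed as a map in $\al_j$ with active variable $z_2$. The right-hand side is already known, by \eqref{eq:bar-h-h-com}; so Lemma \ref{lem:linear-sys} again determines $C_\bullet$ uniquely, and one checks that the Heisenberg candidate $C_j=\<\al_i,\al_j\>\pd{z_2}z_1\inv\delta(z_2/z_1)$ solves the equation — the $\Phi$-convolution term producing precisely $\<\eta''(\al_j,z_2-z_1),\al_i\>$ by an integration by parts inside the residue (using $\Res_{z'}\partial_{z'}(\,\cdot\,)=0$) together with $\eta''=\tfrac{d}{dz}\eta'$. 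This yields \eqref{eq:AQ-1}. Note the dependence: \eqref{eq:bar-h-h-com} and \eqref{eq:bar-h-e-com} are proved first and independently of each other, and \eqref{eq:AQ-1} is then deduced from \eqref{eq:bar-h-h-com}.

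\textbf{Main obstacle.} The conceptual step — that bracketing \eqref{eq:def-beta-i} reproduces exactly the linear system of Lemma \ref{lem:linear-sys}, so that uniqueness does all the work — is immediate. The delicate part is the residue bookkeeping: one must keep track of which expansion $\iota_{z,z'}$ is used in each $\Phi$ and in the distributions $\eta''(\al_i,z_1-z_2)$, so that the $\Phi$-convolution of $\eta'$ with the non-$\delta$ part of each candidate really is the residue of a series regular in the integration variable (and so vanishes), and so that the $\eta'$-versus-$\eta''$ derivative identity is applied with the correct sign. This is the point that requires care.
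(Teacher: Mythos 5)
Your proof is correct and follows the paper's argument: all three brackets are characterized as the unique solutions of the linear system of Lemma \ref{lem:linear-sys} obtained by bracketing the defining relation \eqref{eq:def-beta-i}, with \eqref{eq:AQ-1} then deduced from \eqref{eq:bar-h-h-com}. The only cosmetic differences are that for \eqref{eq:AQ-1} the paper sets up the system in the first slot (in $\al_i$, active variable $z_1$), computing $[\al_{i,\hbar}(z_1),\varphi(\al_j,z_2)]$ from \eqref{eq:bar-h-h-com}, whereas you work in the second slot, and that you spell out the residue verification of each candidate, which the paper leaves implicit.
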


\begin{proof}
Define
\begin{align*}
  \varphi_1(\cdot,z):\h&\longrightarrow \End_{\C[[\hbar]]}(W)[[z_2^{\pm 1}]][[z,z\inv]];\quad
  \al_i\mapsto [\varphi(\al_i,z),\al_{j,\hbar}(z_2)].
\end{align*}
From \eqref{AQ1}, we get the following linear system
\begin{align*}
  &\varphi_1(\al_i,z)+\Phi(\eta'(\al_i,z),\varphi_1)
  =[\varphi(\al_i,z),\al_{j,\hbar}(z_2)]+[\Phi(\eta'(\al_i,z),\varphi),\al_{j,\hbar}(z_2)]\\
  =&[\al_{i,\hbar}(z),\al_{j,\hbar}(z_2)]
  =\<\al_i,\al_j\>\pd{z_2}z\inv\delta\(\frac{z_2}{z}\)
  -\<\eta''(\al_i,z-z_2),\al_j\>
  +\<\eta''(\al_j,z_2-z),\al_i\>.
\end{align*}
Lemma \ref{lem:linear-sys} provides the following unique solution:
\begin{align*}
  \varphi_1(\al_i,z)=\<\al_i,\al_j\>\pd{z_2}z\inv\delta\(\frac{z_2}{z}\)
    +\<\eta''(\al_j,z_2-z),\al_i\>,
\end{align*}
as desired.

Define
\begin{align*}
  \varphi_2(\cdot,z):\h&\longrightarrow \End_{\C[[\hbar]]}(W)[[z_2^{\pm 1}]][[z,z\inv]];\quad
  \al_i\mapsto [\varphi(\al_i,z),e_{j,\hbar}^\pm(z_2)].
\end{align*}
From (AQ2), we get the following linear system
\begin{align*}
  &\varphi_2(\al_i,z)+\Phi(\eta'(\al_i,z),\varphi_2)\\
  =&[\varphi(\al_i,z),e_{j,\hbar}^\pm(z_2)]+[\Phi(\eta'(\al_i,z),\varphi),e_{j,\hbar}^\pm(z_2)]
  =[\al_{i,\hbar}(z),e_{j,\hbar}^\pm(z_2)]\\
  =&\pm\<\al_i,\al_j\>e_{j,\hbar}^\pm(z_2)z_1\inv\delta\(\frac{z_2}{z}\)
  \pm \<\eta'(\al_i,z-z_2),\al_j\>e_{j,\hbar}^\pm(z_2)
   \pm \<\eta'(\al_j,z_2-z),\al_i\>e_{j,\hbar}^\pm(z_2).
\end{align*}
Lemma \ref{lem:linear-sys} provides the following unique solution:
\begin{align*}
  \varphi_2(\al_i,z)=\pm\<\al_i,\al_j\>e_{j,\hbar}^\pm(z_2)z\inv\delta\(\frac{z_2}{z}\)
    \pm\<\eta'(\al_j,z_2-z),\al_i\>e_{j,\hbar}^\pm(z_2),
\end{align*}
as desired.

Finally, we define
\begin{align*}
  \varphi_3(\cdot,z):\h&\longrightarrow \End_{\C[[\hbar]]}(W)[[z_2^{\pm 1}]][[z,z\inv]];\quad
  \al_i\mapsto [\varphi(\al_i,z),\varphi(\al_j,z_2)].
\end{align*}
Then we get from \eqref{eq:bar-h-h-com} that
\begin{align*}
  &\varphi_3(\al_i,z)+\Phi(\eta'(\al_i,z),\varphi_3)
  =[\varphi(\al_i,z),\varphi(\al_j,z_2)]+[\Phi(\eta'(\al_i,z),\varphi),\varphi(\al_j,z_2)]\\
  =&
  [\al_{i,\hbar}(z),\varphi(\al_j,z_2)]
  =\<\al_i,\al_j\>\pd{z_2}z\inv\delta\(\frac{z_2}{z}\)
  -\<\eta''(\al_i,z-z_2),\al_j\>.
\end{align*}
Lemma \ref{lem:linear-sys} provides the following unique solution:
\begin{align*}
  \varphi_3(\al_i,z)=\<\al_i,\al_j\>\pd{z_2}z\inv\delta\(\frac{z_2}{z}\),
\end{align*}
as desired.
\end{proof}

\begin{prop}\label{prop:A-Q-rels}
For $i\in I$, we define
\begin{align}
  &e_i^\pm(z)=e_{i,\hbar}^\pm(z)\exp\(\mp\Phi(\eta(\al_i,z),\varphi)\).\label{eq:def-e-i}
\end{align}
Then we have that
\begin{align}
  &[\varphi(\al_i,z_1),e_j^\pm(z_2)]
  =\pm\<\al_i,\al_j\>e_j^\pm(z_2)z_1\inv\delta\(\frac{z_2}{z_1}\),\label{eq:AQ-2}\\
  & \iota_{z_1,z_2}P_{ij}(z_1-z_2)e_i^\pm(z_1)e_j^\pm(z_2)
                =\iota_{z_2,z_1}P_{ij}(z_1-z_2)e_j^\pm(z_2)e_i^\pm(z_1),\label{eq:AQ-3}\\
  & \iota_{z_1,z_2}Q_{ij}(z_1-z_2)e_i^\pm(z_1)e_j^\mp(z_2)
                =\iota_{z_2,z_1}Q_{ij}(z_1-z_2)e_j^\mp(z_2)e_i^\pm(z_1),\label{eq:AQ-4}\\
  & \frac{d}{dz}e_i^\pm(z)=\pm\varphi(\al_i,z)^+e_i^\pm(z)\pm e_i^\pm(z)\varphi(\beta_i,z)^-,\label{eq:AQ-5}\\
  & \iota_{z_1,z_2}(z_1-z_2)^{\<\al_i,\al_i\>}e_i^+(z_1)e_j^-(z_2)\label{eq:AQ-6}
                =\iota_{z_2,z_1}(z_1-z_2)^{\<\al_i,\al_i\>}e_j^-(z_2)e_i^+(z_1),\\
  & \((z_1-z_2)^{\<\al_i,\al_i\>}e_i^+(z_1)e_i^-(z_2)\)|_{z_1=z_2}=1,\label{eq:AQ-7}
\end{align}
where $P_{ij}(z)$ is given in \eqref{AQ3} and $Q_{ij}(z)$ is given in \eqref{AQ4}.
\end{prop}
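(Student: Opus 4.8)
I would prove \eqref{eq:AQ-2}--\eqref{eq:AQ-7} by reducing each to \eqref{eq:bar-h-h-com}--\eqref{eq:AQ-1} together with a few scalar-valued (``$c$-number'') commutation identities for $\varphi(\al_i,z)$ and the operators $\Phi_i(z):=\Phi(\eta(\al_i,z),\varphi)$. Writing $\eta(\al_i,z)=\sum_k\al_k\ot f_{ik}(z)$, one has $\Phi_i(z)=\sum_k\Res_{z'}\varphi(\al_k,z')\,\iota_{z,z'}f_{ik}(z-z')$ and $\frac{d}{dz}\Phi_i(z)=\Phi(\eta'(\al_i,z),\varphi)$, so \eqref{eq:def-beta-i} reads $\al_{i,\hbar}(z)=\varphi(\al_i,z)+\frac{d}{dz}\Phi_i(z)$ and \eqref{eq:def-e-i} reads $e_{i,\hbar}^\pm(z)=e_i^\pm(z)\exp\(\pm\Phi_i(z)\)$. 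Because \eqref{eq:AQ-1} makes every bracket of two $\varphi$'s --- hence of two $\Phi$'s --- a $c$-number, I would first record, by short residue computations: (i) $[\varphi(\al_i,z_1),\Phi_j(z_2)]=\iota_{z_2,z_1}\<\al_i,\eta'(\al_j,z_2-z_1)\>$; and (iii) $[\Phi_i(z_1),\Phi_j(z_2)]=0$ --- here \eqref{eq:AQ-1} turns the double residue into $\sum_{k,l}\<\al_k,\al_l\>\Res_{z'}\big(\iota_{z_1,z'}f_{ik}(z_1-z')\big)\big(\iota_{z_2,z'}f'_{jl}(z_2-z')\big)$, in which both factors are power series in $z'$ with only nonnegative exponents, so the residue vanishes.

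Using (i), I would then prove \eqref{eq:AQ-2}: expanding $e_j^\pm(z_2)=e_{j,\hbar}^\pm(z_2)\exp(\mp\Phi_j(z_2))$ and applying the Leibniz rule for the commutator, \eqref{eq:bar-h-e-com} contributes $\pm\<\al_i,\al_j\>e_j^\pm(z_2)z_1\inv\delta(z_2/z_1)$ together with a stray term $\pm\<\al_i,\eta'(\al_j,z_2-z_1)\>e_j^\pm(z_2)$, while $[\varphi(\al_i,z_1),\exp(\mp\Phi_j(z_2))]=\mp[\varphi(\al_i,z_1),\Phi_j(z_2)]\exp(\mp\Phi_j(z_2))$ contributes, by (i), exactly $\mp\<\al_i,\eta'(\al_j,z_2-z_1)\>e_j^\pm(z_2)$; the two cancel. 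With \eqref{eq:AQ-2} in hand I would record the last auxiliary identity (ii) $[\Phi_i(z_1),e_j^\pm(z_2)]=\pm\,e_j^\pm(z_2)\,\iota_{z_1,z_2}\<\al_j,\eta(\al_i,z_1-z_2)\>$, again from a single residue.

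For the $S$-locality-type relations \eqref{eq:AQ-3}, \eqref{eq:AQ-4}, \eqref{eq:AQ-6} the argument is uniform: substitute $e_{i,\hbar}^\pm(z)=e_i^\pm(z)\exp(\pm\Phi_i(z))$ into the corresponding axiom \eqref{AQ3}, \eqref{AQ4} or \eqref{AQ6}, and use (ii) to move each $\exp(\pm\Phi_i(z_1))$ to the right of the intervening $e_j^\pm(z_2)$; each move produces a scalar factor which, by the symmetry of $\<\cdot,\cdot\>$, cancels exactly against the exponential prefactor ($e^{\mp\<\eta(\al_i,z_1-z_2),\al_j\>}$, resp.\ $e^{\mp\<\eta(\al_j,z_2-z_1),\al_i\>}$) built into the axiom. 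After this rearrangement both sides carry the \emph{same} trailing product $\exp(\pm\Phi_i(z_1))\exp(\pm\Phi_j(z_2))$ --- well-defined, invertible, and independent of the order of the two factors by (iii) --- so it may be cancelled, leaving precisely \eqref{eq:AQ-3}, \eqref{eq:AQ-4}, \eqref{eq:AQ-6}. For \eqref{eq:AQ-7} I would apply the same substitution to the second clause of \eqref{AQ6}; after cancelling its scalar prefactor the trailing factor becomes $\exp(\Phi_i(z_1))\exp(-\Phi_i(z_2))=\exp(\Phi_i(z_1)-\Phi_i(z_2))$ (using (iii)), which specializes to $1$ at $z_1=z_2$. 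Finally, for \eqref{eq:AQ-5} I would differentiate $e_i^\pm(z)=e_{i,\hbar}^\pm(z)\exp(\mp\Phi_i(z))$, insert \eqref{AQ5} and $\al_{i,\hbar}(z)=\varphi(\al_i,z)+\frac{d}{dz}\Phi_i(z)$, and use $\frac{d}{dz}\exp(\mp\Phi_i(z))=\mp\big(\frac{d}{dz}\Phi_i(z)\big)\exp(\mp\Phi_i(z))$ (valid by (iii)); commuting $\frac{d}{dz}\Phi_i(z)$ past $e_i^\pm(z)$ and past $\exp(\mp\Phi_i(z))$ via (i) and (ii), one finds that every $\Phi_i$-dependent term cancels, the surviving scalar being exactly the constant $\<\eta'(\al_i,0)^+,\al_i\>$ that kills the last summand of \eqref{AQ5}.

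The step I expect to be the main obstacle is identity (iii), $[\Phi_i(z_1),\Phi_j(z_2)]=0$, and its correct use: it is precisely what forces the exponential ``tails'' on the two sides of each locality axiom to coincide, so that they cancel --- without it one would recover \eqref{eq:AQ-3}, \eqref{eq:AQ-4}, \eqref{eq:AQ-6} only up to a central twist. The remaining care is pure bookkeeping: tracking the two expansions $\iota_{z_1,z_2}$ versus $\iota_{z_2,z_1}$ in (i), (ii) and in the scalar prefactors of \eqref{AQ3}/\eqref{AQ4}/\eqref{AQ6}, and justifying the specialization $z_1=z_2$ in \eqref{AQ5}--\eqref{eq:AQ-7} (which is licit because those axioms already assert well-definedness there).
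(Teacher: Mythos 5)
Your proposal is correct and follows essentially the same route as the paper: derive the scalar commutation identities for $\varphi(\al_i,z)$ and $\Phi(\eta(\al_j,z),\varphi)$ from \eqref{eq:bar-h-e-com} and \eqref{eq:AQ-1}, then substitute the definition \eqref{eq:def-e-i} into each axiom and cancel the exponential prefactors against the scalars produced by reordering. Your explicit identity (iii), $[\Phi_i(z_1),\Phi_j(z_2)]=0$, is indeed needed to put the trailing exponentials on both sides of \eqref{eq:AQ-3}, \eqref{eq:AQ-4}, \eqref{eq:AQ-6} in the same order before cancelling; the paper uses this silently, so making it explicit is a small improvement rather than a deviation.
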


\begin{proof}
From \eqref{eq:AQ-1}, we have that
\begin{align*}
  &[\Phi(\al_i\ot g(z_1),\varphi),e_{j,\hbar}^\pm(z_2)]
  =\pm\<\al_i,\al_j\>e_{j,\hbar}^\pm(z_2)g(z_1-z_2)
\end{align*}
for any $g(z)\in\C((z))[[\hbar]]$.
By linearity, we get that
\begin{align}
  &[\Phi(\eta'(\al_i,z_1),\varphi),e_{j,\hbar}^\pm(z_2)]
  =\pm \<\eta'(\al_i,z_1-z_2),\al_j\>e_{j,\hbar}^\pm(z_2),\label{eq:Phi-e-h-com-der}\\
  &[\Phi(\eta(\al_i,z_1),\varphi),e_{j,\hbar}^\pm(z_2)]
  =\pm \<\eta(\al_i,z_1-z_2),\al_j\>e_{j,\hbar}^\pm(z_2).\label{eq:Phi-e-h-com}
\end{align}
%
Then we have that
\begin{align*}
  &[\varphi(\al_i,z_1),e_j^\pm(z_2)]
  =[\varphi(\al_i,z_1),e_{j,\hbar}^\pm(z_2)\exp\(\mp\Phi(\eta(\al_i,z_2),\varphi)\)]\\
  =&[\varphi(\al_i,z_1),e_{j,\hbar}^\pm(z_2)]\exp\(\mp\Phi(\eta(\al_i,z_2))\)
  +e_{j,\hbar}^\pm(z_2)[\varphi(\al_i,z_1),\exp\(\mp\Phi(\eta(\al_i,z_2))\)]\\
  =&\pm \<\al_i,\al_j\>e_{j,\hbar}^\pm(z_2)z_1\delta\(\frac{z_2}{z_1}\)\pm \<\eta'(\al_j,z_2-z_1),\al_i\>e_{j,\hbar}^\pm(z_2)
  \mp \<\eta'(\al_j,z_2-z_1),\al_i\>e_{j,\hbar}^\pm(z_2)\\
  =&\pm \<\al_i,\al_j\> e_{j,\hbar}^\pm(z_2)z_1\delta\(\frac{z_2}{z_1}\),
\end{align*}
where the second equation follows from \eqref{eq:bar-h-e-com} and \eqref{eq:Phi-e-h-com}.
It proves \eqref{eq:AQ-2}.

From \eqref{eq:Phi-e-h-com}, we have that
\begin{align*}
  &e_i^{\epsilon_1}(z_1)e_j^{\epsilon_2}(z_2)
  =e_{i,\hbar}^{\epsilon_1}(z_1)\exp\(-{\epsilon_1}\Phi(\eta(\al_i,z_1),\varphi)\)
  e_{j,\hbar}^{\epsilon_2}(z_2)\exp\(-{\epsilon_2}\Phi(\eta(\al_j,z_2),\varphi)\)\\
  =&e^{-{\epsilon_1}\epsilon_2\<\eta(\al_i,z_1-z_2),\al_j\>}
  e_{i,\hbar}^{\epsilon_1}(z_1)
  e_{j,\hbar}^{\epsilon_2}(z_2)
  \exp\(-{\epsilon_1}\Phi(\eta(\al_i,z_1),\varphi)\)
    \exp\(-{\epsilon_2}\Phi(\eta(\al_j,z_2),\varphi)\).
\end{align*}
Then we get from (AQ3) that
\begin{align*}
  &\iota_{z_1,z_2}P_{ij}(z_1-z_2)e_i^\pm(z_1)e_j^\pm(z_2)
  -\iota_{z_2,z_1}P_{ij}(z_1-z_2)e_j^\pm(z_2)e_i^\pm(z_1)\\
  =&\iota_{z_1,z_2}P_{ij}(z_1-z_2)e^{-\<\eta(\al_i,z_1-z_2),\al_j\>}e_{i,\hbar}^\pm(z_1)e_{j,\hbar}^\pm(z_2)\\
  &\quad\times\exp\(\mp\Phi(\eta(\al_i,z_1),\varphi)\)\exp\(\mp\Phi(\eta(\al_j,z_2),\varphi)\)\\
  &-\iota_{z_2,z_1}P_{ij}(z_1-z_2)e^{-\<\eta(\al_j,z_2-z_1),\al_i\>}
  e_{j,\hbar}^\pm(z_2)e_{i,\hbar}^\pm(z_1)\\
  &\quad\times\exp\(\mp\Phi(\eta(\al_i,z_1),\varphi)\)\exp\(\mp\Phi(\eta(\al_j,z_2),\varphi)\)\\
  =&0.
\end{align*}
And we get from (AQ4) that
\begin{align*}
  &\iota_{z_1,z_2}Q_{ij}(z_1-z_2)e_i^\pm(z_1)e_j^\mp(z_2)
  -\iota_{z_2,z_1}Q_{ij}(z_1-z_2)e_j^\mp(z_2)e_i^\pm(z_1)\\
  =&\iota_{z_1,z_2}Q_{ij}(z_1-z_2)e^{\<\eta(\al_i,z_1-z_2),\al_j\>}
  e_{i,\hbar}^\pm(z_1)e_{j,\hbar}^\mp(z_2)\\
  &\quad\times\exp\(\mp\Phi(\eta(\al_i,z_1),\varphi)\)\exp\(\pm\Phi(\eta(\al_j,z_2),\varphi)\)\\
  &-\iota_{z_2,z_1}Q_{ij}(z_1-z_2)e^{\<\eta(\al_j,z_2-z_1),\al_i\>}e_{j,\hbar}^\mp(z_2)e_{i,\hbar}^\pm(z_1)\\
  &\quad\times\exp\(\mp\Phi(\eta(\al_i,z_1),\varphi)\)\exp\(\pm\Phi(\eta(\al_j,z_2),\varphi)\)\\
  =&0.
\end{align*}
It proves \eqref{eq:AQ-3} and \eqref{eq:AQ-4}.
Similarly, we derive \eqref{eq:AQ-6} from the first equation of \eqref{AQ6}.
From \eqref{AQ5}, we get that
\begin{align*}
  &\frac{d}{dz}e_i^\pm(z)=\frac{d}{dz}e_{i,\hbar}^\pm(z)\exp\(\mp\Phi(\eta(\al_i,z),\varphi)\)\\
  =&\pm \beta_{i,\hbar}(z)^+e_{i,\hbar}^\pm(z)\exp\(\mp\Phi(\eta(\al_i,z),\varphi)\)
  +e_{i,\hbar}^\pm(z)\al_{i,\hbar}(z)^-\exp\(\mp\Phi(\eta(\al_i,z),\varphi)\)\\
  &-\<\eta'(\al_i,0)^+,\al_i\>e_{i,\hbar}^\pm(z)\exp\(\mp\Phi(\eta(\al_i,z),\varphi)\)\\
  &\mp e_{i,\hbar}^\pm(z)\Phi(\eta'(\al_i,z),\varphi)\exp\(\mp\Phi(\eta(\al_i,z),\varphi)\)\\
  =&\pm\(\varphi(\al_i,z)^++\Phi(\eta'(\al_i,z)^+,\varphi)\)e_{i,\hbar}^\pm(z)\exp\(\mp\Phi(\eta(\al_i,z),\varphi)\)\\
  &\pm e_{i,\hbar}^\pm(z)\(\varphi(\al_i,z)^-+\Phi(\eta'(\al_i,z)^-,\varphi)\)\exp\(\mp\Phi(\eta(\al_i,z),\varphi)\)\\
  &-\<\eta'(\al_i,0)^+,\al_i\>e_{i,\hbar}^\pm(z)\exp\(\mp\Phi(\eta(\al_i,z),\varphi)\)\\
  &\mp e_{i,\hbar}^\pm(z)\Phi(\eta'(\al_i,z),\varphi)\exp\(\mp\Phi(\eta(\al_i,z),\varphi)\)\\
  =&\pm\varphi(\al_i,z)^+e_i^\pm(z)\pm e_i^\pm(z)\varphi(\al_i,z)^-
  \pm[\Phi(\eta'(\al_i,z)^+,\varphi),e_{i,\hbar}^\pm(z)]\exp\(\mp\Phi(\eta(\al_i,z),\varphi)\)\\
  &-\<\eta'(\al_i,0)^+,\al_i\>e_i^\pm(z)\\
  =&\pm\varphi(\al_i,z)^+e_i^\pm(z)\pm e_i^\pm(z)\varphi(\al_i,z)^-,
\end{align*}
where the second equation follows from \eqref{eq:def-beta-i}, and the last equation follows from \eqref{eq:Phi-e-h-com-der}.
Then we complete the proof of \eqref{eq:AQ-5}.
Notice that
\begin{align*}
  &(z_1-z_2)^{\<\al_i,\al_i\>}e_i^+(z_1)e_i^-(z_2)\\
  =&e^{\<\eta(\al_i,z_1-z_2),\al_i\>}(z_1-z_2)^{\<\al_i,\al_i\>}e_{i,\hbar}^+(z_1)e_{i,\hbar}^-(z_2)
    \exp\(-\Phi(\eta(\al_i,z_1),\varphi)\)\exp\(\Phi(\eta(\al_i,z_2),\varphi)\).
\end{align*}
Then we get from \eqref{AQ6} that
\begin{align*}
  \((z_1-z_2)^{\<\al_i,\al_i\>}e_i^+(z_1)e_i^-(z_2)\)|_{z_1=z_2}=1.
\end{align*}
We complete the proof of \eqref{eq:AQ-7}.
\end{proof}

Similar to \eqref{eq:def-E}, we let ($i\in I$, $\epsilon=\pm$):
\begin{align*}
  \varphi(\al_i,z)=\sum_{n\in\Z}\al_i(n)z^{-n-1},\quad
  E^\pm(\epsilon\al_i,z)=\exp\(\epsilon\sum_{n\in\Z_+}\frac{\al_i(\pm n)}{\pm n}z^{\mp n}\).
\end{align*}
Inspired by the theory of $Z$-operators, we have the following result.

\begin{prop}\label{prop:Z-ops}
For each $i\in I$, we define
\begin{align}
  Z_i^\pm(z)=E^-(\pm \al_i,z)e_i^\pm(z)E^+(\pm\al_i,z)z^{\mp\al_i(0)}.
\end{align}
Then we have that
\begin{align}\label{eq:Z-noz}
  Z_i^\pm(z)\in\End_{\C[[\hbar]]}W,
\end{align}
that is, $Z_i^\pm(z)$ does not depend on $z$, so that we can write $Z_i^\pm$ for $Z_i^\pm(z)$.
Moreover,
\begin{align}
  &[\varphi(\al_i,z),Z_j^\pm]=\pm\<\al_i,\al_j\>Z_j^\pm z\inv,&&
  Z_i^\pm Z_j^\pm=(-1)^{\<\al_i,\al_j\>}Z_j^\pm Z_i^\pm,\label{eq:Z-ops-rel-1-2}\\
  &Z_i^\pm Z_j^\mp=(-1)^{\<\al_i,\al_j\>}Z_j^\mp Z_i^\pm,&& Z_i^+Z_i^-=1.\label{eq:Z-ops-rel-3-4}
\end{align}
\end{prop}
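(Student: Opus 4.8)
The plan is to prove the $z$-independence of $Z_i^\pm(z)$ by differentiating, just as in the classical theory of $Z$-operators, and then to extract the commutation relations \eqref{eq:Z-ops-rel-1-2}--\eqref{eq:Z-ops-rel-3-4} from the corresponding relations in Proposition \ref{prop:A-Q-rels}. First I would compute $\frac{d}{dz}Z_i^\pm(z)$ using the product rule. The four factors are $E^-(\pm\al_i,z)$, $e_i^\pm(z)$, $E^+(\pm\al_i,z)$, and $z^{\mp\al_i(0)}$; their logarithmic derivatives are $\mp\varphi(\al_i,z)^-$ (the singular part, via the standard identity $\frac{d}{dz}E^-(\pm\al_i,z)=\pm(\sum_{n>0}\al_i(-n)z^{n-1})E^-(\pm\al_i,z)$ — but one must be careful: $E^-$ involves $\al_i(-n)$ for $n>0$, contributing the \emph{regular} negative-mode part), $\pm\varphi(\al_i,z)^+e_i^\pm(z)\pm e_i^\pm(z)\varphi(\al_i,z)^-$ from \eqref{eq:AQ-5}, $\mp\varphi(\al_i,z)^+$-type terms from $E^+$, and $\mp\al_i(0)z\inv$ from the power of $z$. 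The key point is that when these five contributions are assembled and the factors $E^\pm$ are commuted past $e_i^\pm(z)$ using \eqref{eq:AQ-2} (which controls $[\varphi(\al_i,z_1),e_j^\pm(z_2)]$ and hence $[\al_i(m),e_i^\pm(z)]$), everything cancels. Concretely, \eqref{eq:AQ-5} supplies the terms $\pm\varphi(\al_i,z)^+e_i^\pm(z)\pm e_i^\pm(z)\varphi(\al_i,z)^-$; conjugating by $E^-(\pm\al_i,z)$ and $E^+(\pm\al_i,z)$ converts $\varphi(\al_i,z)^\pm$ into the full field minus the zero mode, and the zero-mode discrepancy is exactly killed by the $\frac{d}{dz}z^{\mp\al_i(0)}=\mp\al_i(0)z\inv z^{\mp\al_i(0)}$ term. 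So $\frac{d}{dz}Z_i^\pm(z)=0$, giving \eqref{eq:Z-noz}.

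For the commutation relation $[\varphi(\al_i,z),Z_j^\pm]=\pm\<\al_i,\al_j\>Z_j^\pm z\inv$, I would conjugate: since $E^\pm(\pm\al_j,w)$ and $w^{\mp\al_j(0)}$ are built out of the modes $\al_j(n)$, which by \eqref{eq:AQ-1} commute with $\varphi(\al_i,z)$ only up to the scalar/delta terms already recorded, one commutes $\varphi(\al_i,z)$ through each of the four factors of $Z_j^\pm(w)$ and collects the resulting scalar corrections; they telescope to $\pm\<\al_i,\al_j\>z\inv$ times $Z_j^\pm(w)$, and then one sets $w$ to any convenient value (or uses $z$-independence of $Z_j^\pm$). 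For $Z_i^\pm Z_j^\pm=(-1)^{\<\al_i,\al_j\>}Z_j^\pm Z_i^\pm$ and the analogous mixed relation, I would start from the locality relations \eqref{eq:AQ-3}, \eqref{eq:AQ-4}, \eqref{eq:AQ-6} for the $e_i^\pm(z)$, multiply through by the appropriate $E^\pm$ factors and powers of $z_1,z_2$, use the standard identity $E^+(\al,z_1)E^-(\beta,z_2)=\iota_{z_1,z_2}(1-z_2/z_1)^{-\<\al,\beta\>}E^-(\beta,z_2)E^+(\al,z_1)$ (valid in this $\hbar$-adic setting) to move the $E$'s into normal order, observe that the factors $\iota(z_1-z_2)^{\pm\<\al_i,\al_j\>}$ coming from this rearrangement cancel precisely against $P_{ij}$, $Q_{ij}$, or $(z_1-z_2)^{\<\al_i,\al_i\>}$, leaving an identity of the form $Z_i^\pm(z_1)Z_j^\pm(z_2)=(\pm 1)^{\ldots}Z_j^\pm(z_2)Z_i^\pm(z_1)$ up to a sign $(-1)^{\<\al_i,\al_j\>}$ produced by the branch-cut comparison $\iota_{z_1,z_2}$ versus $\iota_{z_2,z_1}$; then $z$-independence removes the variables. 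Finally $Z_i^+Z_i^-=1$ follows from the normalization \eqref{eq:AQ-7}: after pulling all $E$'s into normal order the combination $(z_1-z_2)^{\<\al_i,\al_i\>}e_i^+(z_1)e_i^-(z_2)$ times the $E$-factors becomes $Z_i^+(z_1)Z_i^-(z_2)$ times an $\iota$-expansion that equals $1$ at $z_1=z_2$, and $z$-independence of the $Z_i^\pm$ forces $Z_i^+Z_i^-=1$ identically.

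The main obstacle I anticipate is the bookkeeping in the derivative computation for \eqref{eq:Z-noz}: one must track the regular versus singular parts of $\varphi(\al_i,z)$ carefully, keep the zero mode $\al_i(0)$ separate throughout, and verify that conjugating \eqref{eq:AQ-5} by $E^-(\pm\al_i,z)$ and $E^+(\pm\al_i,z)$ produces exactly $\pm(\varphi(\al_i,z)-\al_i(0)z\inv)$ acting on the left and right in the right places — the potential for sign errors and for mismatched mode ranges is high. A secondary subtlety is that all manipulations of $E^\pm$ take place in the $\hbar$-adic completion, so one should confirm that the generating-function identities used (the $(1-z_2/z_1)^{-\<\al,\beta\>}$ normal-ordering rule and the $\iota_{z_1,z_2}$ versus $\iota_{z_2,z_1}$ sign) remain valid verbatim over $\C[[\hbar]]$; this is routine since these identities only involve the classical Heisenberg modes $\al_i(n)$, which satisfy the undeformed relations \eqref{eq:AQ-1}, but it should be stated. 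Once \eqref{eq:Z-noz} is in hand, the remaining four relations are essentially formal consequences of Proposition \ref{prop:A-Q-rels} combined with the standard $Z$-operator calculus.
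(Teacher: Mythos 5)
Your proposal is correct and follows essentially the same route as the paper: $z$-independence via the product rule, the logarithmic derivatives of $E^\pm(\pm\al_i,z)z^{\mp\al_i(0)}$ (which produce exactly $\mp\varphi(\al_i,z)^+$ on the left and $\mp\varphi(\al_i,z)^-$, zero mode included, on the right) cancelling against \eqref{eq:AQ-5}; and the commutation relations by normal-ordering the $E^\pm$ factors against \eqref{eq:AQ-3}, \eqref{eq:AQ-4}, \eqref{eq:AQ-6}, \eqref{eq:AQ-7}, with the sign $(-1)^{\<\al_i,\al_j\>}$ arising from comparing $\iota_{z_1,z_2}$ and $\iota_{z_2,z_1}$ expansions of $(z_1-z_2)^{\<\al_i,\al_j\>}$ after clearing denominators. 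The only cosmetic difference is that the paper does not ``cancel'' the rearrangement factors against $P_{ij}$ or $Q_{ij}$ but rather multiplies both sides by $(z_1-z_2)^{\<\al_i,\al_j\>}$ and then by a suitable $g(z_1-z_2)$ to reduce to honest power series before invoking $z$-independence, which is exactly the clearing-of-denominators step you anticipate.
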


\begin{proof}
Notice that
\begin{align*}
  &\frac{d}{dz}E^-(-\epsilon\al_i,z)=\epsilon E^-(-\epsilon\al_i,z)\varphi(\al_i,z)^+\\
  &\frac{d}{dz}E^+(-\epsilon\al_i,z)z^{\epsilon\al_i(0)}
  =\epsilon\varphi(\al_i,z)^-E^+(-\epsilon\al_i,z)z^{\epsilon\al_i(0)}.
\end{align*}
Combining these relations with \eqref{eq:AQ-5}, we get that
\begin{align*}
  \frac{d}{dz}&Z_i^\pm(z)=\(\frac{d}{dz}E^-(\pm \al_i,z)\)e_i^\pm(z)E^+(\pm\al_i,z)z^{\mp\al_i(0)}\\
  &+E^-(\pm \al_i,z)\(\frac{d}{dz}e_i^\pm(z)\)E^+(\pm\al_i,z)z^{\mp\al_i(0)}\\
  &+E^-(\pm \al_i,z)e_i^\pm(z)\(\frac{d}{dz}E^+(\pm\al_i,z)z^{\mp\al_i(0)}\)\\
  =&\mp E^-(\pm \al_i,z)\varphi(\al_i,z)^+e_i^\pm(z)E^+(\pm\al_i,z)z^{\mp\al_i(0)}\\
  &\pm E^-(\pm \al_i,z)\varphi(\al_i,z)^+e_i^\pm(z)E^+(\pm\al_i,z)z^{\mp\al_i(0)}\\
  &\pm E^-(\pm \al_i,z)e_i^\pm(z)\varphi(\al_i,z)^-E^+(\pm\al_i,z)z^{\mp\al_i(0)}\\
  &\mp E^-(\pm \al_i,z)e_i^\pm(z)\varphi(\al_i,z)^-E^+(\pm\al_i,z)z^{\mp\al_i(0)}\\
  =&0.
\end{align*}
It shows that $Z_i^\pm(z)$ does not depend on $z$.

Notice that
\begin{align*}
  &E^+(-\epsilon\al_i,z_1)z_1^{\epsilon\al_i}e_j^\pm(z_2)
  =e_j^\pm(z_2)E^+(-\epsilon\al_i,z_1)z_1^{\epsilon\al_i}(z_1-z_2)^{\pm\epsilon\<\al_i,\al_j\>},\\
  &E^-(-\epsilon\al_i,z_1)e_j^\pm(z_2)
  =e_j^\pm(z_2)E^-(-\epsilon\al_i,z_1)\(1-z_2/z_1\)^{\mp\epsilon\<\al_i,\al_j\>},\\
  &E^+(-\epsilon_1\al_i,z_1)E^-(-\epsilon_2\al_j,z_2)
  =E^-(-\epsilon_2\al_j,z_2)E^+(-\epsilon_1\al_i,z_1)\(1-z_2/z_1\)^{\epsilon_1\epsilon_2\<\al_i,\al_j\>}.
\end{align*}
Then for $i,j\in I$, we have that
\begin{align}
  &\iota_{z_1,z_2}P_{ij}(z_1-z_2)(z_1-z_2)^{\<\al_i,\al_j\>}Z_i^\pm(z_1)Z_j^\pm(z_2)\label{eq:prop-Z-temp0}\\
  =&E^-(\pm \al_i,z_1)e_i^\pm(z_1)E^+(\pm\al_i,z_1)z_1^{\mp\al_i}
    E^-(\pm \al_j,z_2)
     e_j^\pm(z_2)E^+(\pm\al_j,z_2)z_2^{\mp\al_j}\nonumber\\
  =&E^-(\pm \al_i,z_1)E^-(\pm \al_j,z_2)
    \iota_{z_1,z_2}P_{ij}(z_1-z_2)e_i^\pm(z_1)e_j^\pm(z_2)\nonumber\\
   &\times E^+(\pm\al_i,z_1)z_1^{\mp\al_i}E^+(\pm\al_j,z_2)z_2^{\mp\al_j}.\nonumber
\end{align}
Similarly, we have that
\begin{align*}
  &\iota_{z_2,z_1}P_{ij}(z_1-z_2)(z_1-z_2)^{\<\al_i,\al_j\>}Z_j^\pm(z_2)Z_i^\pm(z_1)\\
  =&(-1)^{\<\al_i,\al_j\>}E^-(\pm \al_i,z_1)E^-(\pm \al_j,z_2)
    \iota_{z_2,z_1}P_{ij}(z_1-z_2)e_j^\pm(z_2)e_i^\pm(z_1)\\
  &\quad\times  E^+(\pm\al_i,z_1)z_1^{\mp\al_i}E^+(\pm\al_j,z_2)z_2^{\mp\al_j}.
\end{align*}
Then we get from \eqref{eq:AQ-3} that
\begin{align}\label{eq:prop-Z-temp1}
  &\iota_{z_1,z_2}P_{ij}(z_1-z_2)(z_1-z_2)^{\<\al_i,\al_j\>}Z_i^\pm(z_1)Z_j^\pm(z_2)\\
  =&(-1)^{\<\al_i,\al_j\>}\iota_{z_2,z_1}P_{ij}(z_1-z_2)(z_1-z_2)^{\<\al_i,\al_j\>}Z_j^\pm(z_2)Z_i^\pm(z_1).\nonumber
\end{align}
Let $0\ne g(z)\in\C[z][[\hbar]]$, such that
\begin{align*}
  g(z)P_{ij}(z)z^{\<\al_i,\al_j\>}\in\C[z][[\hbar]].
\end{align*}
By multiplying $g(z_1-z_2)$ on both hand sides of \eqref{eq:prop-Z-temp1}, we get that
\begin{align*}
  &g(z_1-z_2)P_{ij}(z_1-z_2)(z_1-z_2)^{\<\al_i,\al_j\>}Z_i^\pm(z_1)Z_j^\pm(z_2)\\
  =&(-1)^{\<\al_i,\al_j\>}g(z_1-z_2)P_{ij}(z_1-z_2)(z_1-z_2)^{\<\al_i,\al_j\>}Z_j^\pm(z_2)Z_i^\pm(z_1).
\end{align*}
It follows that
\begin{align*}
  Z_i^\pm Z_j^\pm=(-1)^{\<\al_i,\al_j\>}Z_j^\pm Z_i^\pm,
\end{align*}
since $Z_i^\pm(z)\in \E_{\C[[\hbar]]}(W)$ (see \eqref{eq:Z-noz}).
Similarly, we have that
\begin{align*}
  Z_i^\pm Z_j^\mp=(-1)^{\<\al_i,\al_j\>}Z_j^\mp Z_i^\pm.
\end{align*}

Finally, from a similar calculation as \eqref{eq:prop-Z-temp0}, we get that
\begin{align*}
  &Z_i^+(z_1)Z_i^-(z_2)
  =E^-(\al_i,z_1)E^-(- \al_i,z_2)\\
  \times
    \iota_{z_1,z_2}&(z_1-z_2)^{\<\al_i,\al_i\>}e_i^+(z_1)e_i^-(z_2)
    E^+(\al_i,z_1)z_1^{-\al_i}E^+(-\al_i,z_2)z_2^{\al_i}.
\end{align*}
Therefore,
\begin{align*}
  &Z_i^+Z_i^-=Z_i^+(z_1)Z_i^-(z_1)
  =\((z_1-z_2)^{\<\al_i,\al_i\>}e_i^+(z_1)e_i^-(z_2)\)
  |_{z_1=z_2}
  =1,
\end{align*}
where the last equation follows from \eqref{eq:AQ-7}.
\end{proof}

\begin{coro}\label{coro:group-alg-action}
There exists a unique $\C_\epsilon[Q]$-module action on $W$ such that
\begin{align*}
  e_{\pm\al_i}.w=Z_i^\pm w\quad\te{for }i\in I,\,\,w\in W.
\end{align*}
\end{coro}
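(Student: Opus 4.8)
The plan is to deduce Corollary~\ref{coro:group-alg-action} directly from Proposition~\ref{prop:Z-ops}. A $\C[[\hbar]]$-linear $\C_\epsilon[Q]$-module structure on $W$ is the same datum as a $\C$-algebra homomorphism $\rho\colon\C_\epsilon[Q]\to\End_{\C[[\hbar]]}W$, and I will check that the operators $Z_i^\pm$ furnished by Proposition~\ref{prop:Z-ops} satisfy exactly the defining relations of a generating set of $\C_\epsilon[Q]$; hence such a $\rho$ exists with $\rho(e_{\pm\al_i})=Z_i^\pm$ and is uniquely so determined.

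First I would record the structure of $\C_\epsilon[Q]$ that is needed. Since $Q=\oplus_{i\in I}\Z\al_i$ and $\epsilon$ is bimultiplicative, $\C_\epsilon[Q]$ is generated as a $\C$-algebra by $\set{e_{\pm\al_i}}{i\in I}$, and it admits the presentation by these generators subject to
\begin{align*}
  e_{\al_i}e_{-\al_i}=e_{-\al_i}e_{\al_i}=1,\qquad
  e_{\varepsilon\al_i}e_{\varepsilon'\al_j}=(-1)^{\<\al_i,\al_j\>}e_{\varepsilon'\al_j}e_{\varepsilon\al_i}
\end{align*}
for $i,j\in I$ and $\varepsilon,\varepsilon'\in\{+,-\}$. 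The commutation relations come from $e_\al e_\be=\epsilon(\al,\be)e_{\al+\be}$ together with $\epsilon(\al,\be)\epsilon(\be,\al)\inv=(-1)^{\<\al,\be\>}$, while $e_{\al_i}e_{-\al_i}=e_0=1$ uses $\epsilon(\al_i,-\al_i)=\epsilon(\al_i,\al_i)\inv$ (a consequence of bimultiplicativity and $\epsilon(\al_i,0)=1$) together with $\epsilon(\al_i,\al_i)=\epsilon(\tfrac1d\al_i,\tfrac1d\al_i)^{d^2}=1$, read off from the defining formula for $\epsilon$. That these relations are complete is the standard fact that the twisted group algebra of a finitely generated free abelian group by a bicharacter-type cocycle is a quantum-torus-type algebra: the relations reduce every word in the generators to an ordered monomial $e_{\al_1}^{n_1}\cdots e_{\al_m}^{n_m}$ with $n_i\in\Z$ (writing $e_{-\al_i}=e_{\al_i}\inv$), and these monomials biject with the designated basis $\set{e_\al}{\al\in Q}$, so the tautological surjection from the abstractly presented algebra is an isomorphism.

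Then I would invoke Proposition~\ref{prop:Z-ops}: the identities \eqref{eq:Z-ops-rel-1-2} and \eqref{eq:Z-ops-rel-3-4} say precisely that $\set{Z_i^\pm}{i\in I}\subset\End_{\C[[\hbar]]}W$ obeys the relations displayed above (taking $i=j$ in $Z_i^\pm Z_j^\mp=(-1)^{\<\al_i,\al_j\>}Z_j^\mp Z_i^\pm$ and using $Z_i^+Z_i^-=1$ also yields $Z_i^-Z_i^+=1$). Hence the presentation gives a unique $\C$-algebra homomorphism $\rho\colon\C_\epsilon[Q]\to\End_{\C[[\hbar]]}W$ with $\rho(e_{\pm\al_i})=Z_i^\pm$, and $a.w:=\rho(a)w$ is the asserted $\C_\epsilon[Q]$-module structure on $W$. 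Uniqueness is immediate because $\C_\epsilon[Q]$ is generated by the $e_{\pm\al_i}$, so any module structure with the prescribed action of the generators coincides with this one.

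The only step that needs genuine care is the bookkeeping for $\C_\epsilon[Q]$ --- above all, checking $\epsilon(\al_i,\al_i)=1$, so that the normalization $e_{\al_i}e_{-\al_i}=1$ in the twisted group algebra matches $Z_i^+Z_i^-=1$ in Proposition~\ref{prop:Z-ops}; the remainder is a direct comparison of relations.
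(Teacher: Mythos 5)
Your proposal is correct and follows exactly the route the paper intends: the corollary is stated without proof precisely because relations \eqref{eq:Z-ops-rel-1-2} and \eqref{eq:Z-ops-rel-3-4} of Proposition \ref{prop:Z-ops} are the defining relations of $\C_\epsilon[Q]$ on the generators $e_{\pm\al_i}$. Your verification that $\epsilon(\al_i,\al_i)=\epsilon(\tfrac1d\al_i,\tfrac1d\al_i)^{d^2}=1$ (so that $e_{\al_i}e_{-\al_i}=e_0=1$ matches $Z_i^+Z_i^-=1$), together with the standard presentation of the twisted group algebra of a free abelian group, correctly supplies the bookkeeping the paper leaves implicit.
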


\begin{prop}\label{prop:A-eta-mod-to-V-Q-mod}
There exists a unique $V_Q[[\hbar]]$-module structure $Y_W$ on $W$ such that
\begin{align*}
  Y_W(\al_i,z)=\varphi(\al_i,z),\quad Y_W(e_{\pm\al_i},z)=e_i^\pm(z)\quad\te{for }i\in I.
\end{align*}
\end{prop}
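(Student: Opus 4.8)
The plan is to deduce the statement from the classical correspondence of Lepowsky and H. Li \cite[Section 6.5]{LL} (cf.\ \cite{FLM2}) between $V_Q$-modules and modules for the associative algebra $A(Q)$, applied on each layer $W/\hbar^nW$ of the $\hbar$-adic filtration and then reassembled. Propositions \ref{prop:A-Q-rels}, \ref{prop:Z-ops} and Corollary \ref{coro:group-alg-action} have been arranged precisely so that, after passing from the fields $\al_{i,\hbar}(z),e_{i,\hbar}^\pm(z)$ to $\varphi(\al_i,z),e_i^\pm(z)$, all of the $\eta$-twisting disappears and the data that remain are exactly those of a classical $V_Q$-module.

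First I would fix $n\in\Z_+$ and check that $W/\hbar^nW$, equipped with the fields $\pi_n\varphi(\al_i,z),\pi_n e_i^\pm(z)\in\E(W/\hbar^nW)$, is a module over $A(Q)$ (with base ring $\C[\hbar]/(\hbar^n)$): the modes of $\pi_n\varphi(\al_i,z)$ define, by \eqref{eq:AQ-1}, a smooth level-one action of $\hat\h$ with $\h$ acting through the zero modes $\al_i(0)$ (extended $\C$-linearly using $\h=\Span_\C\{\al_i\}_{i\in I}$); $\C_\epsilon[Q]$ acts through the operators $Z_i^\pm$ via Corollary \ref{coro:group-alg-action}, compatibly with the Heisenberg action by \eqref{eq:Z-ops-rel-1-2}; and \eqref{eq:AQ-2}, \eqref{eq:AQ-5}, \eqref{eq:AQ-7} supply the remaining defining relations involving the $e_i^\pm(z)$. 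For the mutual locality of the $e_i^\pm(z)$ I would not appeal to \eqref{AQ3}, \eqref{AQ4} directly---their rational factors $P_{ij},Q_{ij}$ are unspecified and have already been consumed in Proposition \ref{prop:Z-ops}---but instead use the $Z$-operator realization $e_i^\pm(z)=E^-(\mp\al_i,z)E^+(\mp\al_i,z)Z_i^\pm z^{\pm\al_i(0)}$, obtained by inverting the definition of $Z_i^\pm$ and moving $E^+(\mp\al_i,z)$ to the left (legitimate by \eqref{eq:Z-ops-rel-1-2}); the standard vertex-operator computation then produces the locality relations with their canonical prefactors straight from \eqref{eq:Z-ops-rel-1-2}--\eqref{eq:Z-ops-rel-3-4}. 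The Lepowsky--Li construction---which is purely formal and hence valid over any commutative $\C$-algebra---now endows $W/\hbar^nW$ with a unique $V_Q$-module structure $Y_{W/\hbar^nW}$ with $Y_{W/\hbar^nW}(\al_i,z)=\pi_n\varphi(\al_i,z)$ and $Y_{W/\hbar^nW}(e_{\pm\al_i},z)=\pi_n e_i^\pm(z)$; it is $\C[\hbar]/(\hbar^n)$-linear, since multiplication by $\hbar$ is an endomorphism of the $A(Q)$-module and the construction is the identity on underlying spaces.

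Next I would invoke uniqueness to see that the reduction maps $W/\hbar^{n+1}W\to W/\hbar^nW$ intertwine the $Y_{W/\hbar^nW}$, and pass to the inverse limit; since $\E_\hbar(W)=\varprojlim_n\E(W/\hbar^nW)$ and $W$ is topologically free, this produces a $\C[[\hbar]]$-linear map $Y_W\colon V_Q[[\hbar]]\to\E_\hbar(W)$ with $Y_W(\al_i,z)=\varphi(\al_i,z)$ and $Y_W(e_{\pm\al_i},z)=e_i^\pm(z)$. The $V_Q[[\hbar]]$-module axioms ($\hbar$-adic compatibility, $Y_W(\vac,z)=1_W$, weak associativity) follow because they hold modulo every $\hbar^n$. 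Finally, uniqueness of $Y_W$ among $V_Q[[\hbar]]$-module structures with the prescribed values on $\al_i,e_{\pm\al_i}$ follows from weak associativity, since $V_Q$ is generated as a vertex algebra by $\{\al_i,e_{\pm\al_i}\}_{i\in I}$ (the modes of $Y_Q(\al_i,z)$ generate the $S(\hat\h^-)$-part, and iterating the products $Y_Q(e_{\pm\al_i},z)e_\beta$, whose leading coefficients recover $e_{\pm\al_i+\beta}$ up to nonzero scalars, yields every $e_\al$ with $\al\in Q$).

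The main obstacle is essentially bookkeeping: matching the list of relations established in Propositions \ref{prop:A-Q-rels} and \ref{prop:Z-ops} to the precise hypotheses of the Lepowsky--Li construction, and verifying that construction descends to the Artinian bases $\C[\hbar]/(\hbar^n)$ compatibly in $n$. The one genuinely substantive point---already taken care of by the preparatory lemmas---is that the correct locality of the $e_i^\pm(z)$ must be extracted from the $Z$-operator relations rather than read off from \eqref{AQ3}--\eqref{AQ4}; on the topological side, one must also confirm that the inverse limit of the $Y_{W/\hbar^nW}$ actually lands in $\E_\hbar(W)$, which is automatic since each $Y_{W/\hbar^nW}(v,z)$ lies in $\E(W/\hbar^nW)$.
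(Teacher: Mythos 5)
Your proposal is correct and follows essentially the same route as the paper: both arguments strip off the $\eta$-deformation via the $Z$-operator realization $e_i^\pm(z)=E^-(\mp\al_i,z)E^+(\mp\al_i,z)Z_i^\pm z^{\pm\al_i(0)}$ (the paper writes this as $e_\al(z)=E^-(-\al,z)E^+(-\al,z)e_\al z^{\al(0)}$ for all $\al\in Q$ using Corollary \ref{coro:group-alg-action}), verify the standard lattice relations from \eqref{eq:AQ-1}--\eqref{eq:AQ-7} and \eqref{eq:Z-ops-rel-1-2}--\eqref{eq:Z-ops-rel-3-4}, and then invoke the Lepowsky--Li construction of \cite[Section 6.5]{LL}, with uniqueness from the generating set $\{\al_i,e_{\pm\al_i}\}_{i\in I}$. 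The only difference is bookkeeping: the paper applies \cite{LL} directly in the $\hbar$-adic setting, whereas you reduce modulo $\hbar^n$ and reassemble by an inverse limit, which is a slightly more careful justification of the same step.
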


\begin{proof}
For $h=\sum_{i\in I}c_i\al_i\in\h$, we define
\begin{align*}
  h(z)=\sum_{i\in I}c_i\varphi(\al_i,z).
\end{align*}
In addition, if $\al\in Q$, we define
\begin{align*}
  e_\al(z)=E^-(-\al,z)E^+(-\al,z)e_\al z^{\al(0)},
\end{align*}
where the operator $e_\al\in \End(W)$ is defined as in Corollary \ref{coro:group-alg-action}.
One can straightforwardly verify that $e_0(z)=1$ and
\begin{align}
  &[h(z_1),h'(z_2)]=\<h,h'\>\pd{z_2}z_1\inv\delta\(\frac{z_2}{z_1}\),\\
  &[h(z_1),e_\beta(z_2)]=\<\al,\beta\>e_\beta(z_2)z_1\inv\delta\(\frac{z_2}{z_1}\),\\
  &(z_1-z_2)^{-\<\al,\beta\>}e_\al(z_1)e_\beta(z_2)=(-z_2+z_1)^{-\<\al,\beta\>}e_\beta(z_2)e_\al(z_1),\\
  &\frac{d}{dz}e_\al(z)=\al(z)^+e_\al(z)+e_\al(z)\al(z)^-,\\
  &(z_1-z_2)^{-\<\al,\beta\>-1}e_\al(z_1)e_\beta(z_2)
  -(-z_2+z_1)^{-\<\al,\beta\>-1}e_\beta(z_2)e_\al(z_1)\\
  &\quad=\epsilon(\al,\beta)e_{\al+\beta}(z_2)z_1\inv\delta\(\frac{z_2}{z_1}\),\nonumber
\end{align}
where $h,h'\in\h$ and $\al,\beta\in Q$.
It is proved in \cite[Section 6.5]{LL} that $W$ carries a $V_Q$-module structure such that
\begin{align*}
  Y_W(h,z)=h(z),\quad Y_W(e_\al,z)=e_\al(z)\quad\te{for }h\in\h,\,\al\in Q.
\end{align*}
The uniqueness follows from the fact that $V_Q$ is generated by $\h\cup\set{e_{\pm\al_i}}{i\in I}$.
\end{proof}

\noindent\emph{Proof of Theorem \ref{thm:Undeform-qlattice}:}
Proposition \ref{prop:A-eta-mod-to-V-Q-mod} provides a $V_Q[[\hbar]]$-module structure $Y_W(\cdot,z)$ on $W$.
Denote this module by $\mathfrak U_\eta(W)$.
Let $(W',\{\al_{i,\hbar}(z)\}_{i\in I}, \{e_{i,\hbar}^\pm(z)\}_{i\in I})$ be another object of $\mathcal A_\hbar^\eta(Q)$,
and let $f:W\to W'$ be a morphism.
That is, $f$ is a $\C[[\hbar]]$-module map and
\begin{align*}
    \al_{i,\hbar}(z)\circ f=f\circ \al_{i,\hbar}(z),\quad e_{i,\hbar}^\pm(z)\circ f=f\circ e_{i,\hbar}^\pm(z)\quad \te{for }i\in I.
\end{align*}
Define
\begin{align*}
  \varphi_1:\h&\longrightarrow \Hom_{\C[[\hbar]]}(W,W')[[z,z\inv]]\\
    \al_i&\mapsto \varphi(\al_i,z)\circ f-f\circ \varphi(\al_i,z).
\end{align*}
Then we get the following linear system
\begin{align*}
  &\varphi_1(\al_i,z)+\Phi(\eta'(\al_i,z),\varphi_1)\\
  =&\(\varphi(\al_i,z)+\Phi(\eta'(\al_i,z),\varphi)\)\circ f-f\circ \(\varphi(\al_i,z)+\Phi(\eta'(\al_i,z),\varphi)\)\\
  =&\al_{i,\hbar}(z)\circ f-f\circ \al_{i,\hbar}(z)=0\quad\te{for }i\in I,
\end{align*}
where the last equation follows from \eqref{eq:def-beta-i}.
Lemma \ref{lem:linear-sys} provides the following unique solution:
\begin{align*}
  \varphi(\al_i,z)\circ f-f\circ \varphi(\al_i,z)=\varphi_1(\al_i,z)=0\quad\te{for }i\in I
\end{align*}
From the definition of $e_i^\pm(z)$ (see \eqref{eq:def-e-i}), we also have that
\begin{align*}
  &e_i^\pm(z)\circ f=f\circ e_i^\pm(z)\quad\te{for }i\in I.
\end{align*}
Since $Y_W$ is uniquely determined by the following conditions (see Proposition \ref{prop:A-eta-mod-to-V-Q-mod})
\begin{align*}
  &Y_W(\al_i,z)=\varphi(\al_i,z),\quad Y_W(e_{\pm\al_i},z)=e_i^\pm(z)\quad \te{for }i\in I,
\end{align*}
$f:\mathfrak U_\eta(W)\to \mathfrak U_\eta(W')$ is a $V_Q[[\hbar]]$-module homomorphism.
We get a functor $\mathfrak U_\eta$ from the category $\mathcal A_\hbar^\eta(Q)$ to the category of $V_Q[[\hbar]]$-modules.

Next, we show that $\mathfrak U_\eta$ is the inverse of $\mathfrak I\circ\mathfrak D_\eta$.
Let $(W,Y_W)$ be a $V_Q[[\hbar]]$-module.
Proposition \ref{prop:latticeVA-mod-to-qlatticeVA-mod} provides a $V_Q^\eta$-module structure $Y_W^\eta$ on $W$ uniquely determined by
\begin{align}
    &Y_W^\eta(\al_i,z)=Y_W(\al_i,z)+\Phi(\eta'(\al_i,z),Y_W),\label{eq:Y-W-eta}\\
    &Y_W^\eta(e_{\pm\al_i},z)=Y_W(e_{\pm\al_i},z)\exp\(\pm \Phi(\eta(\al_i,z),Y_W) \)\quad\te{for }i\in I.\label{eq:Y-W-eta-2}
\end{align}
From Proposition \ref{prop:qlatticeVA-mod-to-A-mod}, we have that
\begin{align*}
    (W,\{Y_W^\eta(\al_i,z)\}_{i\in I},\{Y_W^\eta(e_{\pm\al_i},z)\}_{i\in I})\in \obj \mathcal A_\hbar^\eta(Q).
\end{align*}
Then by using Proposition \ref{prop:A-eta-mod-to-V-Q-mod}, we get another $V_Q[[\hbar]]$-module structure $Y_W'(\cdot,z)$ on $W$, such that
\begin{align}
    &Y_W'(\al_i,z)+\Phi(\eta'(\al_i,z),Y_W')=Y_W^\eta(\al_i,z),\label{eq:Y-W-beta-i}\\
    &Y_W'(e_{\pm\al_i},z)=Y_W^\eta(e_{\pm\al_i},z)\exp\(\mp \Phi(\eta(\al_i,z),Y_W') \)\quad\te{for }i\in I.\label{eq:Y-W-e-i}
\end{align}
In view of Lemma \ref{lem:linear-sys}, we get from \eqref{eq:Y-W-eta} and \eqref{eq:Y-W-beta-i} that
\begin{align*}
    Y_W(\al_i,z)=Y_W'(\al_i,z)\quad\te{for }i\in I.
\end{align*}
Consequently, we get from \eqref{eq:Y-W-eta-2} and \eqref{eq:Y-W-e-i} that
\begin{align*}
    &Y_W'(e_{\pm\al_i},z)=Y_W^\eta(e_{\pm\al_i},z)\exp\(\mp \Phi(\eta(\al_i,z),Y_W') \)\\
    =&Y_W^\eta(e_{\pm\al_i},z)\exp\(\mp \Phi(\eta(\al_i,z),Y_W \)
    =Y_W(e_{\pm\al_i},z)\qquad\te{for }i\in I.
\end{align*}
Therefore, $Y_W=Y_W'$, since $V_Q[[\hbar]]$ is generated by $\set{\al_i,e_{\pm\al_i}}{i\in I}$.
It proves that
$\mathfrak U_\eta\circ \mathfrak I\circ\mathfrak D_\eta$ is the identity functor of the category of $V_Q[[\hbar]]$-modules.

Let $(W,Y_W^\eta)$ be a $V_Q^\eta$-module.
Proposition \ref{prop:qlatticeVA-mod-to-A-mod} provides that
\begin{align*}
    (W,\{Y_W^\eta(\al_i,z)\}_{i\in I},\{Y_W^\eta(e_{\pm\al_i},z)\}_{i\in I})\in \obj \mathcal A_\hbar^\eta(Q).
\end{align*}
Using Proposition \ref{prop:A-eta-mod-to-V-Q-mod}, we get a $V_Q[[\hbar]]$-module structure $Y_W(\cdot,z)$ on $W$ uniquely determined by
\begin{align*}
    &Y_W(\al_i,z)+\Phi(\eta'(\al_i,z),Y_W)=Y_W^\eta(\al_i,z),\\
    &Y_W(e_{\pm\al_i},z)=Y_W^\eta(e_{\pm\al_i},z)\exp\(\mp \Phi(\eta(\al_i,z),Y_W) \).
\end{align*}
Then Proposition \ref{prop:latticeVA-mod-to-qlatticeVA-mod} provides another $V_Q^\eta$-module structure $\wh Y_W^\eta(\cdot,z)$ on $W$, such that
\begin{align*}
    &\wh Y_W^\eta(\al_i,z)=Y_W(\al_i,z)+\Phi(\eta'(\al_i,z),Y_W)=Y_W^\eta(\al_i,z),\\
    &\wh Y_W^\eta(e_{\pm\al_i},z)=Y_W(e_{\pm\al_i},z)\exp\( \pm\Phi(\eta(\al_i,z),Y_W) \)=Y_W^\eta(e_{\pm\al_i},z).
\end{align*}
Since $V_Q^\eta$ is generated by $\set{\al_i,e_{\pm\al_i}}{i\in I}$,
we get that $\wh Y_W^\eta=Y_W^\eta$.
Therefore, $\mathfrak D_\eta\circ\mathfrak U_\eta\circ \mathfrak I$ is the identity functor of the category of $V_Q^\eta$-modules.

\bibliographystyle{unsrt}


\begin{bibdiv}
\begin{biblist}

\bib{Bor}{article}{
      author={Borcherds, R.},
       title={{Vertex algebras, {K}ac-{M}oody algebras, and the {M}onster}},
        date={1986},
     journal={Proc. Natl. Acad. Sci. USA},
      volume={83},
       pages={3068\ndash 3071},
}

\bib{BJK-qva-BCD}{article}{
      author={Butorac, M.},
      author={Jing, N.},
      author={Ko{\v{z}}i{\'{c}}, S.},
       title={$\hbar$-adic quantum vertex algebras associated with rational
  ${R}$-matrix in types ${B}$, ${C}$ and ${D}$},
        date={2019},
     journal={Lett. Math. Phys.},
      volume={109},
       pages={2439\ndash 2471},
}

\bib{D1}{article}{
      author={Dong, C.},
       title={{Vertex algebras associated with even lattices}},
        date={1993},
     journal={J. Algebra},
      volume={161},
       pages={245\ndash 265},
}

\bib{DLM}{article}{
      author={Dong, C.},
      author={Li, H.},
      author={Mason, G.},
       title={{Regularity of rational vertex operator algebras}},
        date={1997},
     journal={Adv. Math.},
      volume={132},
       pages={148\ndash 166},
}

\bib{Dr-new}{inproceedings}{
      author={{D}rinfeld, V.},
       title={A new realization of {Y}angians and quantized affine algebras},
        date={1988},
   booktitle={Soviet math. dokl},
      volume={36},
       pages={212\ndash 216},
}

\bib{EK-qva}{article}{
      author={Etingof, P.},
      author={Kazhdan, D.},
       title={Quantization of {L}ie bialgebras, {P}art {V}: {Q}uantum vertex
  operator algebras},
        date={2000},
     journal={Selecta Math.},
      volume={6},
      number={1},
       pages={105},
}

\bib{FLM2}{book}{
      author={Frenkel, I.},
      author={Lepowsky, J.},
      author={Meurman, A.},
       title={{Vertex Operator Algebras and the Monster}},
      series={Pure and Applied Mathematics},
   publisher={Academic Press},
     address={New York},
        date={1988},
      volume={134},
        ISBN={0-12-276065-5},
}

\bib{J-KM}{article}{
      author={Jing, N.},
       title={Quantum {K}ac-{M}oody algebras and vertex representations},
        date={1998},
     journal={Lett. Math. Phys.},
      volume={44},
      number={4},
       pages={261\ndash 271},
}

\bib{JKLT-Defom-va}{article}{
      author={Jing, N.},
      author={Kong, F.},
      author={Li, H.},
      author={Tan, S.},
       title={Deforming vertex algebras by vertex bialgebras},
        date={2024},
     journal={Comm. Cont. Math.},
      volume={26},
       pages={2250067},
}

\bib{JKLT-Quantum-lattice-va}{article}{
      author={Jing, N.},
      author={Kong, F.},
      author={Li, H.},
      author={Tan, S.},
       title={Twisted quantum affine algebras and equivariant
  $\phi$-coordinated modules for quantum vertex algebras},
         url={https://arxiv.org/abs/2212.01895},
         journal={arXiv:2212.01895},
}

\bib{Kassel-topologically-free}{book}{
      author={Kassel, C.},
       title={Quantum groups, volume 155 of graduate texts in mathematics},
   publisher={Springer-Verlag, New York},
        date={1995},
}

\bib{K-Quantum-aff-va}{article}{
      author={Kong, F.},
       title={Quantum affine vertex algebras associated to untwisted quantum
  affinization algebras},
        date={2023},
     journal={Comm. Math. Phys.},
      volume={402},
       pages={2577\ndash 2625},
}

\bib{Kozic-qva-tri-A}{article}{
      author={Ko{\v{z}}i{\'{c}}, S.},
       title={On the quantum affine vertex algebra associated with
  trigonometric ${R}$-matrix},
        date={2021},
     journal={Selecta Math. (N. S.)},
      volume={27},
       pages={45},
}

\bib{K-qva-phi-mod-BCD}{article}{
      author={Ko{\v{z}}i{\'{c}}, S.},
       title={$\hbar$-adic quantum vertex algebras in types ${B}$, ${C}$, ${D}$
  and their $\phi$-coordinated modules},
        date={2021},
     journal={J. Phys. A: Math. Theor.},
      volume={54},
       pages={485202},
}

\bib{LL}{book}{
      author={Lepowsky, J.},
      author={Li, H.},
       title={{Introduction to vertex operator algebras and their
  representations}},
   publisher={Birkh\"{a}user Boston Incoporation},
        date={2004},
      volume={227},
}

\bib{Li-nonlocal}{article}{
      author={Li, H.},
       title={Nonlocal vertex algebras generated by formal vertex operators},
        date={2006},
     journal={Selecta Math.},
      volume={11},
      number={3-4},
       pages={349},
}

\bib{Li-smash}{article}{
      author={Li, H.},
       title={A smash product construction of nonlocal vertex algebras},
        date={2007},
     journal={Comm. Cont. Math.},
      volume={9},
      number={05},
       pages={605\ndash 637},
}

\bib{Li-h-adic}{article}{
      author={Li, H.},
       title={{$\hbar$-adic quantum vertex algebras and their modules}},
        date={2010},
     journal={Comm. Math. Phys.},
      volume={296},
       pages={475\ndash 523},
}

\bib{Li-phi-coor}{article}{
      author={Li, H.},
       title={{$\phi$-coordinated quasi-modules for quantum vertex algebras}},
        date={2011},
     journal={Comm. Math. Phys.},
      volume={308},
       pages={703\ndash 741},
}

\bib{Naka-quiver}{article}{
      author={Nakajima, H.},
       title={Quiver varieties and finite dimensional representations of
  quantum affine algebras},
        date={2001},
     journal={J. Amer. Math. Soc.},
      volume={14},
      number={1},
       pages={145\ndash 238},
}

\bib{RS-RTT}{article}{
      author={Reshetikhin, Y.},
      author={Semenov-{T}ian {S}hansky, A.},
       title={Central extensions of quantum current groups},
        date={1990},
     journal={Lett. Math. Phys.},
      volume={19},
       pages={133\ndash 142},
}

\end{biblist}
\end{bibdiv}

\end{document}